\newtheorem{satz}{Theorem}
\newtheorem{cor}{Corollary}
\newtheorem{rem}{Remark}
\newtheorem{defi}{Definition}
\newtheorem{lem}{Lemma}
\newtheorem{prop}{Proposition}
\def\N{{\mathbb N}}
\def\R{{\mathbb R}} 
\def\Rd{{\mathbb R}^d}
\def\Z{{\mathbb Z}}
\def\diam{{\rm diam \, }}
\def\dist{{\rm dist \, }}
\newcommand{\be}{\begin{equation}}
\newcommand{\ee}{\end{equation}}
\newcommand{\calk}{{\mathcal K}}
\DeclareMathOperator{\supp}{supp}
\title{Refined localization spaces, Kondratiev spaces with fractional smoothness and extension operators}
\author{Markus Hansen\thanks{
Markus Hansen,  Philipps-Universit\"at Marburg, Hans-Meerwein-Str. 6,  35032 Marburg; 
E-mail: {\sf markus.hansen1@gmx.net},
Phone: +49\,6421\,28\,254868, Fax: +49\,6421\,28\,26945.
Research supported by the European Research Council (ERC) 
under the grant StG306274.}\,   and Cornelia Schneider\thanks{Cornelia Schneider,  FAU Erlangen, Department Mathematik,  Cauerstr. 11,  91058 Erlangen; E-mail: {\sf schneider@math.fau.de},  Phone: +49\,9131\,85\,67207,  Fax: +49\,9131\,85\,67207.}
}
\date{\today}
\begin{document}

\maketitle

\begin{abstract}  
In this paper, we introduce Kondratiev spaces of fractional smoothness based on their close relation to refined localization spaces. Moreover, we investigate relations to other approaches leading to extensions of the scale of Kondratiev spaces with integer order of smoothness, based on complex interpolation, and give further results for complex interpolation of those function spaces. As it turns out to be one of the main tools in studying these spaces on domains of polyhedral type, certain aspects of the analysis of Stein's extension operator are revisited. Finally, as an application, we   study Sobolev-type embeddings.
\end{abstract}

\noindent
{\bf AMS Subject Classification:} 
41A25, %rate of convergence 
41A46, % Approximation by nonlinear expression, widths and entropy
41A65,  %Abstract approximation theory, 65F99, 65N12, 65N
42C40,  %wavelets
65C99\\ %partial differential equation, boundary value problems, miscellaneous

\noindent
{\bf Key Words:} Kondratiev spaces, complex interpolation, extension operator,  fractional smoothness, refined localization spaces, Sobolev embedding.

\tableofcontents

%&&&&&&&&&&&&&&&&&&&&&&&&&&&&&&&&&&&&&&&&&&&&&&&&&&&&&&&&&&&
%&&&&&&&&&&&&&&&&&&&&&&&&&&&&&&&&&&&&&&&&&&&&&&&&&&&&&&&&&&&

%&&&&&&&&&&&&&&&&&&&&&&&&&&&&&&&&&&&&&&&&&&&&&&&&&&&&&&&&&&&
%&&&&&&&&&&&&&&&&&&&&&&&&&&&&&&&&&&&&&&&&&&&&&&&&&&&&&&&&&&&

%\section{Nonlinear Equations in Quasi-Banach Spaces} 
%\label{nonlinear}

%&&&&&&&&&&&&&&&&&&&&&&&&&&&&&&&&&&&&&&&&&&&&&&&&&&&&&&&&&&&
%&&&&&&&&&&&&&&&&&&&&&&&&&&&&&&&&&&&&&&&&&&&&&&&&&&&&&&&&&&&

%&&&&&&&&&&&&&&&&&&&&&&&&&&&&&&&&&&&&&&&&&&&&&&&&&&&&&&&&&&&
%&&&&&&&&&&&&&&&&&&&&&&&&&&&&&&&&&&&&&&&&&&&&&&&&&&&&&&&&&&&

\section{Introduction}

In recent years, Kondratiev spaces, a certain type of weighted Sobolev spaces, have become popular when  studying existence and regularity of solutions to partial differential equations on non-smooth domains -- of particular interest being elliptic problems on polygons, polyhedra, or diffeomorphic deformations thereof. The present article, as part of an ongoing research project \cite{smcw-a, smcw-b}, is another contribution to the investigation of basic properties of such function spaces.

Spaces of functions beyond classical (integer) orders of differentiation are of interest for a number of reasons. Let us just mention two of those which are clearly also relevant in connection with Kondratiev spaces. First, many partial differential equations allow a variational formulation from which existence of solutions in variants of spaces $H^1(D)$ can be derived -- this particularly applies to linear elliptic problems. However, for the calculation of approximate solutions, e.g. via finite element methods, a higher regularity is required. On the other hand, even for very basic problems like the Poisson equation on a non-convex polygon it is well-known that solutions generally do not belong to $H^2(D)$. Thus a finer distinction of regularity becomes necessary.

Another motivation for spaces of fractional smoothness stems from embedding assertions. The classical Sobolev embedding theorem states that the space $W^m_p(D)$ consists of continuous functions whenever $m>d/p$. In particular, for $D\subset\R^2$ we have $W^m_2(D)\hookrightarrow C(D)$ whenever $m\geq 2$. On the other hand, it is known that functions from $W^m_2(D)$ are
H\"older continuous, that is, $W^2_2(D)\hookrightarrow C^s(D)$ for every $s<1/2$. Hence more precise embedding relations require notions of fractional smoothness.

In this manuscript we are mainly  concerned with extending the scale of Kondratiev spaces, classically introduced for integer order of smoothness, to fractional/real smoothness parameters. Respective approaches have been known for some time, see e.g. \cite[Chapter 3]{Tr78}, \cite{lototsky}, or \cite{Ammann}. These were usually based on the complex method of interpolation. In the present work, we shall follow an alternative route, based on close relations of Kondratiev spaces to so-called refined localization spaces and then compare this approach to the previous approaches. 

The paper is organized as follows: In Section 2, we will provide the necessary information about Kondratiev spaces and domains of polyhedral type to the extent necessary for our subsequent arguments. In Section 3, we will discuss Stein's extension operator, as it will be a crucial tool to transfer interpolation results from spaces on $\R^d$ to spaces on domains. %Originally introduced in \cite[Chapter VI]{St70}, this extension operator was  already   discussed in \cite{Hansen} in connection with Kondratiev spaces. Unfortunately, as turned out later on, those results do not apply to polyhedral cones (which clearly are an important special case of domains of polyhedral type). This deficiency is addressed in Theorem
%\ref{thm-stein-general}. 
 Section 4 introduces refined localization spaces, and recalls their basic properties, particularly their characterization via suitable wavelet systems. Our main results then follow in Section 5, where the relation of the refined localization spaces to Kondratiev spaces is discussed, and then exploited in connection with complex interpolation. As a first application, in the final Section 6  the results are used to derive a Sobolev-type embedding theorem for Kondratiev spaces of fractional smoothness.

%%%%%%%%%%%%%%%%%%%%%%%%%%%%%%%%%%%%%%%%%%%%%%%%%%%%%%%%%%%%%%%%%%%%%%%%%%%%%%%%%%%%%%%%%%%%%%%%%%%%%%%%%%%%%%%%%%%%%%%%%%%
%%%%%%%%%%%%%%%%%%%%%%%%%%%%%%%%%%%%%%%%%%%%%%%%%%%%%%%%%%%%%%%%%%%%%%%%%%%%%%%%%%%%%%%%%%%%%%%%%%%%%%%%%%%%%%%%%%%%%%%%%%%

%%%%%%%%%%%%%%%%%%%%%%%%%%%%%%%%%%%%%%%%%%%%%%%%%%%%%%%%%%%%%%%%%%%%%%%%%%%%%%%%%%%%%%%%%%%%%%%%%%%%%%%%%%%%%%%%%%%%%%%%%%%
%%%%%%%%%%%%%%%%%%%%%%%%%%%%%%%%%%%%%%%%%%%%%%%%%%%%%%%%%%%%%%%%%%%%%%%%%%%%%%%%%%%%%%%%%%%%%%%%%%%%%%%%%%%%%%%%%%%%%%%%%%%

\subsection*{Notation}

We start by collecting some general notation used throughout the paper.  As usual, $\N$ stands for the set of all natural numbers, $\N_0=\mathbb N\cup\{0\}$, and  $\Rd$, $d\in\N$, is the
$d$-dimensional real Euclidean space with $|x|$, for $x\in\R^d$, denoting the Euclidean norm of $x$. Let $\N_0^d$, where
$d\in\N$, be the set of all multi-indices, $\alpha = (\alpha_1, \ldots,\alpha_d)$ with $\alpha_j\in\N_0$ and
$|\alpha|:=\sum_{j=1}^d\alpha_j$.  
The set $B_{\varepsilon}(x)$  is the open ball of radius $\varepsilon >0$ centered at $x$.  
We denote by  $c$ a generic positive constant which is independent of the main parameters, but its value may change from line to line. The expression $A\lesssim B$ means that $ A \leq c\,B$. If $A \lesssim B$ and $B\lesssim A$, then we   write
$A\sim B$.   
Given two quasi-Banach spaces $X$ and $Y$, we write $X\hookrightarrow Y$ if $X\subset Y$ and the natural embedding is bounded.  
%Let $\mathcal{S}(\R^d)$ denote the Schwartz space and $\mathcal{S}'(\R^d)$ its topological dual. Moreover, $\mathcal{F}$ 
%stands for the Fourier-transform on $\mathcal{S}'(\R^d)$ with inverse $\mathcal{F}^{-1}$. \\
A domain $\Omega$ is an open bounded set in $\R^d$.  The test functions on $\Omega$ are denoted by
$C_0^\infty(D)=\mathcal{D}(\Omega)$ and $\mathcal{D}'(\Omega)$ stands for the set of distributions on $\Omega$.  
Let $L_p(\Omega)$, $1\leq p\leq \infty$, be the usual Lebesque spaces on $\Omega$.

\section{Kondratiev spaces on domains of polyhedral type}

In this section,  for convenience to the reader,  we briefly recall the definition of domains of polyhedral type and some basic properties of related Kondratiev spaces central to our further arguments. For a more thorough treatment of these topics we refer to \cite{smcw-a} and the extended version \cite{smcw-a-ext}.  

\subsection{Domains of polyhedral type}

We begin with the definition of the domains under consideration. In the sequel the term {\em singularity set}, usually denoted by $S$, of a domain $\Omega\subset\R^d$ refers to the set of all points $x\in\partial\Omega$ such that for any $\varepsilon >0$ the set $\partial\Omega\cap B_{\varepsilon}(x)$ is not smooth.

We will mainly be interested in the case that  $d$ is either $2$ or $3$ and that  $\Omega$ is a bounded domain of polyhedral type. 
The precise definition will be given below in Definitions \ref{def:polyhedral-type-R2} and \ref{def:polyhedral-type}. Essentially, we will consider domains for which the analysis of the associated Kondratiev spaces can be reduced to the 
following four  basic cases:
\begin{itemize}
 \item Smooth cones;
 \item Specific nonsmooth cones;
 \item Specific dihedral domains;
 \item Polyhedral cones.
\end{itemize}
Let $d\ge 2$.
Below, an infinite  smooth  cone with vertex at the origin is the set
\[
 \{x\in \Rd: \: 0 < |x|< \infty\, , \: x/|x| \in \Omega\}\, , 
\]
where $\Omega$ is a simply connected subdomain of the unit sphere $S^{d-1}$ in $\Rd$ with $C^\infty$ boundary.\\
{~}\\

\noindent 
\begin{minipage}{0.4\textwidth}
{\bf Case I:} {\em Kondratiev spaces on smooth cones.}  
Let $K'$ be an infinite  smooth cone in $\R^d$ as described above, and let  $M:=\{0\}$.
Then we define the truncated cone $K$  by \\
 \begin{equation}\label{trunc}
K:= K'\cap B_1(0)
\end{equation}
and put
\end{minipage}\hfill \begin{minipage}{0.5\textwidth}
%\begin{figure}
\includegraphics[width=6cm]{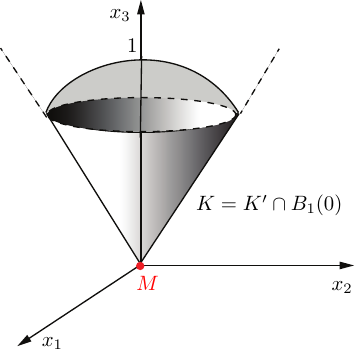}
%\end{figure}
\end{minipage}
\begin{equation} \label{smoothconekondratiev}
\|u|\calk^m_{a,p}(K,M)\| :=
 \Big(\sum_{|\alpha|\leq m}\int_{K}|\,  |x|^{|\alpha|-a}\partial^\alpha u(x)|^p\,dx\Big)^{1/p} \, .
\end{equation}
Observe that $M$ is just a part of the singular set of the boundary of {the truncated cone} $K$.  We further remark that the truncation in \eqref{trunc} not necessarily has to be done with the unit ball.  Any smooth hypersurface in $\mathbb{R}^d$,  particularly a hyperplane (see Case II below) would be sufficient (since it does not generate additional singularities).
\\

\noindent
{\bf Case II:} {\em Kondratiev spaces on specific nonsmooth cones}.\\
Let $K'$ denote a rotationally symmetric smooth cone with opening angle
$\gamma \in (0,\pi/2)$ (the precise value of $\gamma$ will be of no importance),
and let $K$ be its truncated version  $K:=K'\cap\{x\in \mathbb{R}^d:  0<x_d<1\}$  as in Case I.  Moreover, we put
\be\label{ws-01} 
	\hspace{-1cm}I :=\{x \in \R^d:~0<x_i<1,~i=1,\ldots,d\},
\ee
the unit cube in $\R^d$. Then we define two (non-diffeomorphic) versions of specific non-smooth
cones $P$:

\begin{minipage}{0.48\textwidth}
\includegraphics[width=6.5cm]{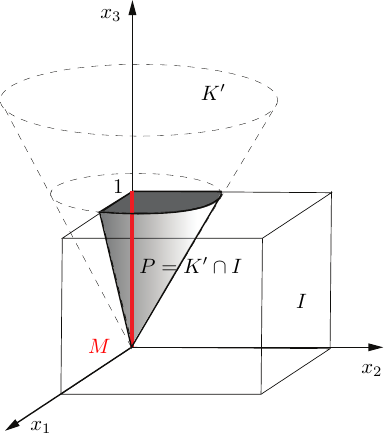}

{\bf Case IIa:} We consider $P=K'\cap I$, or\\ equivalently $P=K\cap I$.
\end{minipage}\hspace{\fill}
\begin{minipage}{0.48\textwidth}
%\begin{figure}
\quad \includegraphics[width=6.5cm]{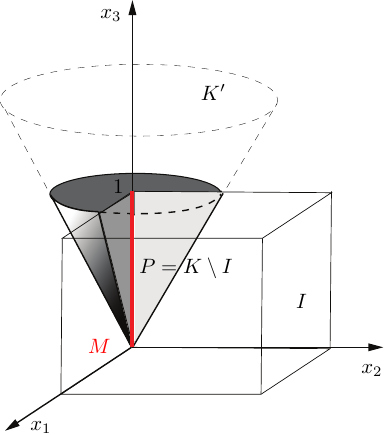}

{\bf Case IIb:} Alternatively, also $P=K\setminus\overline I$ will be called a specific non-smooth
	cone.
%\end{figure}
\end{minipage}\\[5mm]
In both situations, we choose
$$M=\Gamma : = \{x\in \R^d:~ x=(0,\ldots,0, x_d),~0 \le x_d \le 1\}$$
and define
\begin{equation} 
\|u|\calk^m_{a,p}(P,\Gamma)\|:=
\Big(\sum_{|\alpha|\leq m}\int_{P}|\,  \rho(x)^{|\alpha|-a}\partial^\alpha u(x)|^p\,dx\Big)^{1/p}\, , 
\end{equation}
where $\rho(x)$ denotes the distance of $x$ to $\Gamma$, i.e.,  $\rho (x) = |(x_1,\ldots,x_{d-1})|.$
Also in this case the set $\Gamma$ is a proper subset of the singular set of $P$.
\\

\noindent
\begin{minipage}{0.5\textwidth}
	{\bf Case III:}\\
	{\em Kondratiev spaces on specific dihedral domains}.\\
	Let $1\le \ell < d$ and let $I$ be the unit cube defined in \eqref{ws-01}.
	For $x \in \R^d$ we write 
	$$x=(x',x'') \in \R^{d-\ell}\times \R^{\ell},$$ 
	where $x':= (x_1, \ldots \, , x_{d-\ell})$ as well as
	\mbox{$x'':= (x_{d-\ell + 1}, \ldots \, , x_{d})$}. Hence 
	$I = I' \times I''$ with the obvious interpretation. 
	
	Additionally, also sets of the form $I=I'\times K$, where $K\subset\R^{d-\ell}$
	is a truncated cone as in Case I, are considered.
\end{minipage}\hfill
\begin{minipage}{0.4\textwidth}
%\begin{figure}
\includegraphics[width=6cm]{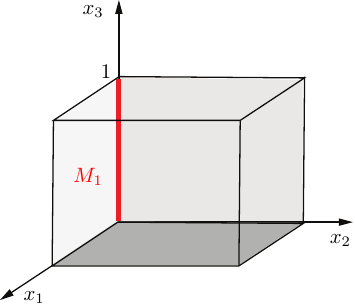}
%\end{figure}
\end{minipage}\\ \\ 

\noindent Then we choose
\be\label{ws-15}
	M_\ell := \{x \in \partial I:~x_1 = \ldots = x_{d-\ell }=0\}
\ee
and define
\begin{equation} \label{Kondratievdihedral}
\|u|\calk^m_{a,p}(I, M_\ell) \|
:= \Big(\sum_{|\alpha|\leq m}\int_{I}|\,  |x'|^{|\alpha|-a}\partial^\alpha u(x)|^p\,dx\Big)^{1/p}\, .
\end{equation}

\noindent
\begin{minipage}{0.5\textwidth}
{\bf Case IV:} {\em Kondratiev spaces on  polyhedral cones}.\\
	{Let $K'$ be an infinite cone in $\R^3$ with vertex at the origin, such that
	$\overline{K'}\setminus\{0\}$ is contained in the half space $\{x\in\R^3:\,x_3>0\}$.}
 We assume that the boundary $\partial K'$ 
consists of the vertex $x=0$, the edges (half lines) $M_1, \ldots \, , M_n$, and smooth faces $\Gamma_1, \ldots , \Gamma_n $.
This means $\Omega := K' \cap S^{2}$ is a domain of polygonal type on the unit sphere with sides 
$\Gamma_k\cap S^2$. Therein without loss of generality we may assume that $\Omega$ is simply connected.
We put 
\[
Q:= K' \,  \cap \, \{x \in \R^3: ~ 0 < x_3 < 1\}\, .
\]
\end{minipage}\hfill \begin{minipage}{0.45\textwidth}
%\begin{figure}
\quad \includegraphics[width=6cm]{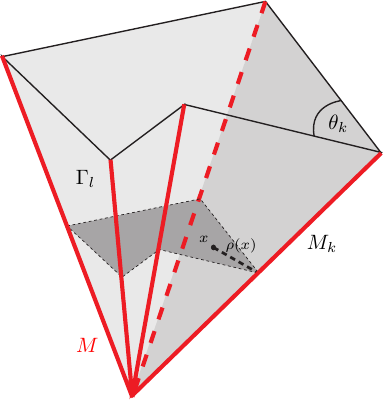}
%\end{figure}
\end{minipage}\\
In this case, we choose
$$M: = (M_1 \cup \ldots \cup M_n)\cap\overline{Q}$$
and define
\begin{equation} \label{def-norm-poly-cone}
\|u|\calk^m_{a,p}(Q,M)\|:=
\Big(\sum_{|\alpha|\leq m}\int_{Q}|\,  \rho(x)^{|\alpha|-a}\partial^\alpha u(x)|^p\,dx\Big)^{1/p}\, ,
\end{equation}
where $\rho(x)$ denotes the distance of $x$ to $M$. 
\\

\begin{rem}\label{rem:def-poly-cone}
	The technical assumption $\overline{K'}\setminus\{0\}\subset\{x\in\R^3:\,x_3>0\}$ immediately implies
	that the truncated cone $Q$ is bounded (alternatively we can truncate the cone $K'$ by intersecting
	with the unit ball $B_1(0)$). Moreover, if $\vartheta(x)$ denotes the angle between the line
	$\overrightarrow{0x}$ and the positive $x_3$-axis, then $\vartheta(x)\leq\vartheta_0<\pi/2$ for all
	$x\in Q$.
\end{rem}

Based on these four cases, we define the specific domains we will be concerned with in this paper.
\begin{defi} \label{def:polyhedral-type-R2}
	Let $D$ be a domain in $\R^2$ with singularity set $S$. 
	Then $D$ is of {\em polyhedral type}, if there exist finite disjoint index sets
	$\Lambda_1$ and $\Lambda_2$ and a covering $(U_i)_i$ of bounded open sets such that 
	$$\overline{D}
		\subset \Big(\bigcup_{i \in \Lambda_1} U_i\Big)
		\cup\Big( \bigcup_{j \in \Lambda_2} U_{j} \Big)\,,$$
	where 
	\begin{enumerate}
		\item
			for $i \in \Lambda_1$ the set $U_i$ is a ball such that $\overline{U_i} \cap S = \emptyset$;
		\item
			for $j\in \Lambda_2$ there exists a $C^{\infty}$-diffeomorphism 
			$\eta_{j}~:~\overline{U_{j}}\longrightarrow \eta_{j}(\overline{U_{j}})\subset \R^d$ such that
			$\eta_{j}(U_{j} \cap D)$ is a smooth cone $K$ as described in {\bf Case I}. Moreover, we assume
			that for all $x \in U_{j} \cap D$ the distance to $S$ is equivalent to the distance to the point
			$x^{j} := \eta_{j}^{-1}(0).$
	\end{enumerate}
\end{defi}

\begin{defi} \label{def:polyhedral-type} \label{def-domain}
	Let $D$ be a domain in $\R^d$,  $d\geq 3$,   with singularity set $S$. 
	Then $D$ is of {\em polyhedral type}, if there exist finite disjoint index sets
	$\Lambda_1,\ldots,\Lambda_5$ and a covering $(U_i)_i$ of bounded open sets such that 
	$$\overline{D}
		\subset \Big(\bigcup_{i \in \Lambda_1} U_i\Big)
		\cup\Big( \bigcup_{j \in \Lambda_2} U_{j} \Big) 
		\cup\Big( \bigcup_{k \in \Lambda_3} U_{k}\Big)
		\cup\Big( \bigcup_{\ell \in \Lambda_4} U_{\ell}\Big)
		%\cup\Big( \bigcup_{m \in \Lambda_5} U_{m}\Big)
		\, ,$$
	where 
	\begin{enumerate}
		\item
			for $i \in \Lambda_1$ the set $U_i$ is a ball such that $\overline{U_i} \cap S = \emptyset$;
		\item
			for $j\in \Lambda_2$ there exists a $C^{\infty}$-diffeomorphism 
			$\eta_{j}~:~\overline{U_{j}}\longrightarrow \eta_{j}(\overline{U_{j}})\subset \R^d$ such that
			$\eta_{j}(U_{j} \cap D)$ is a smooth cone $K$ as described in {\bf Case I}. Moreover, we assume
			that for all $x \in U_{j} \cap D$ the distance to $S$ is equivalent to the distance to the point
			$x^{j} := \eta_{j}^{-1}(0).$
		\item
			for  $k \in \Lambda_3$ there exists a $C^{\infty}$-diffeomorphism
			$\eta_{k}~:~\overline{U_{k}} \longrightarrow \eta_{k}(\overline{U_{k}})\subset\R^d$ such that
			$\eta_{k}(U_{k}\cap D)$ is the nonsmooth cone $P$ as described in {\bf Case II}. Moreover, we
			assume that for all $x\in U_{k}\cap D$ the distance to $S$ is equivalent to the	distance to the
			set $\Gamma^{k}:= \eta_{k}^{-1}(\Gamma)$.
		\item
			for  $\ell \in \Lambda_{4}$   there exists a $C^{\infty}$-diffeomorphism 
			$\eta_{\ell}: \overline{U_{\ell}} \longrightarrow  \eta_{\ell}(\overline{U_{\ell}}) \subset\R^d$
			such that $\eta_{\ell}(U_{\ell}\cap D)$ is a specific dihedral domain as described in
			{\bf Case III}. Moreover, we assume that for all $x\in U_{\ell}\cap D$ the distance to $S$ is
			equivalent to the distance to the set $M^{\ell}:= \eta_{\ell}^{-1}(M_{n})$ for some
			$n \in \{1, \ldots \, , d-1\}$.
		\end{enumerate}

\noindent
In particular,  when $d=3$ we permit another type of subdomain: then 
$$ 
\overline{D}
	\subset \Big(\bigcup_{i \in \Lambda_1} U_i\Big)
		\cup\Big( \bigcup_{j \in \Lambda_2} U_{j} \Big) 
		\cup\Big( \bigcup_{k \in \Lambda_3} U_{k}\Big)
		\cup\Big( \bigcup_{\ell \in \Lambda_4} U_{\ell}\Big)
		\cup\Big( \bigcup_{m \in \Lambda_5} U_{m}\Big)\, ,
$$
where
\begin{itemize}
\item[v)]$ m \in \Lambda_{5}$  if there exists a $C^{\infty}$-diffeomorphism 
$\eta_{m}: \overline{U_{m}} \longrightarrow  \eta_{m}(\overline{U_{m}}) \subset\R^3$ such that
$\eta_{m}(U_{m}\cap D)$  is a polyhedral cone as described in {\bf Case IV}.  
Moreover, we assume that for all $x\in U_{m}\cap D$ the distance to  $S$ is equivalent to the distance to 
the set $M'_m := \eta_{m}^{-1}(M)$.

\end{itemize}	
			
		%	In particular,  when $d=3$ we permit another type of subdomain: then 
	%	\item
	%		for $m\in\Lambda_{5}$  there exists a $C^{\infty}$-diffeomorphism 
	%		$\eta_{m}: \overline{U_{m}} \longrightarrow  \eta_{m}(\overline{U_{m}}) \subset\R^3$ such that
	%		$\eta_{m}(U_{m}\cap D)$  is a polyhedral cone as described in {\bf Case IV}. 
	%		Moreover, we assume that for all $x\in U_{m}\cap D$ the distance to $S$ is equivalent to the
	%		distance to the set $M'_m := \eta_{m}^{-1}(M)$.
%	\end{itemize}
\end{defi}

\begin{rem}\label{rem-def-domain}
	\begin{enumerate}
		\item
			Below we will not always distinguish between the cases $d=2$ and $d\geq 3$, as clearly the case $d=2$
			is similar	to the situation of a domain with conical  points in $\R^3$, that is, a domain with
			$\Lambda_3=\Lambda_4=\Lambda_5=\emptyset$.
		\item
			The Cases I--III are formulated for general $d$, and indeed, in the sequel our arguments 
			work in arbitrary dimensions. However,  let us point out here that  Definition \ref{def:polyhedral-type} for 
			$d>3$ leads to a very restricted class of domains (even the unit cube $[0,1]^d$ is not
			be included). To circumvent this problem would  require to consider further standard situations beyond Cases I--IV. Therefore, a  detailed discussion
			of higher-dimensional domains    is beyond the scope of this paper. In most numerical
			applications the case $d=3$ is the most interesting one anyway.
		\item
			In our considerations, domains of polyhedral type are always bounded, as they can be covered by
			finitely many bounded sets $U_i$. While a number of results can be extended to unbounded domains,
			in the sequel we will not discuss this case.
		\item
			In the literature many different types of polyhedral domains are considered.  
			%As will be discussed
			%below, in our context only the Cases I and III are essential. Therefore, 
			For $d=3$  our definition coincides
			with the one of Maz'ya and Rossmann \cite[4.1.1]{MR2}.  
			%Further variants can be found in 
			%Babu\v{s}ka,  Guo \cite{BabuskaGuo},  Bacuta, Mazzucato, Nistor, Zikatanov \cite{BMNZ} 
			%and Mazzucato, Nistor \cite{NistorMazzucato}.
%	\textcolor{red}{	\item
%			While the types of polyhedral domains coincide, in \cite{MR2} more general weighted Sobolev spaces
%			on those polyhedral domains are discussed. In particular, our spaces $\calk^m_{a,p}(D,S)$ coincide with the
%			classes	$V^{\ell,p}_{\beta,\delta} (D)$ if $m= \ell$, 
%			\[
%				\beta = (\beta_1, \ldots\, , \beta_k)= (\ell-a, \ldots \, , \ell-a) 
%				\quad \mbox{ and }\quad
%				\delta = (\delta_1, \ldots\, , \delta_{k'}) = ( \ell - a, \ldots \, ,\ell - a )\, .
%			\]
%			For the meaning of $k$ and $k'$ we refer to \cite[4.1.1]{MR2}.}
	\end{enumerate}
\end{rem}

\begin{rem}\label{rem-def-domain}
A discussion of a number of examples, as well as a slight generalization of Definition
	\ref{def:polyhedral-type} to include also certain non-Lipschitz domains which the na\"ive geometric
	intuition	would also label ``polyhedral domain'', can be found in \cite{smcw-a-ext}.
	%For a more detailed description of the specific subdomains, we refer to \cite[Section 2.1]{smcw-a}. A thorough discussion
	%of this definition as well as a list of relevant (counter)examples can be found in \cite{smcw-a-ext}. 
	Note that for each
	of those subdomains there is an unbounded and a bounded version (the latter being the intersection of the unbounded
	subdomain with a ball or a cube).
\end{rem}

\begin{rem}\label{rem-polyhedral-cone}
	While a cover of a polyhedral cone by specific non-smooth cones and dihedral domains can always be constructed by rather
	elementary means (see \cite[Lemma 2.7]{smcw-a}), such a cover is not sufficient for all purposes. More precisely,
	particularly in connection with extension arguments also a resolution of unity subordinate to such a cover comes into
	play, i.e. a family of smooth functions, pointwise adding to $1$, with compact supports inside the subdomains. As a
	consequence, while a polyhedral cone $D$ may be covered by finitely many non-smooth cones and dihedral domains there
	remains a neighbourhood of the vertex not covered by the supports of any member of the resolution of unity.
	
	For that reason the above definition explicitly includes polyhedral cones as an additional type of subdomain. As far as
	arguments for extension operators are concerned,  the results presented in Subsection \ref{sect-extension} are based on  a construction using an infinite
	number of specific subdomains (each of which in turn can be covered by finitely many dihedral domains). This is discussed in detail in \cite{HSS}. 
\end{rem}

\subsection{Kondratiev spaces}

While in the sequel we will only consider Kondratiev spaces on domains of polyhedral type, the actual definition can be formulated in slightly greater generality.

\begin{defi} \label{def-kondratiev}
	Let $\Omega$ be a  domain in $\R^d$, and let $M$ be a non-trivial closed subset of its boundary ${\partial \Omega}$.
	Furthermore, let $m\in\N_0$ and $a\in\R$. We put 
	\begin{equation}\label{gewicht} 
		\rho(x):= \min\{1, \dist(x,M)\} \, , \qquad x \in \Omega\, .
	\end{equation}
	{\rm (i)} Let $1\le p < \infty$.
	We define the Kondratiev spaces $\calk^m_{a,p}(\Omega, M)$ as the collection of all measurable functions which 
	admit $m$ weak derivatives in $\Omega$ satisfying
	$$\|u|\calk^m_{a,p}(\Omega,M)\|
		:= \Big(\sum_{|\alpha|\leq m}\int_\Omega|\rho(x)^{|\alpha|-a}\partial^\alpha u(x)|^p\,dx\Big)^{1/p}<\infty\,.$$
	{\rm (ii)} The space $\calk^m_{a,\infty}(\Omega, M)$ is the collection of all measurable functions which 
	admit $m$ weak derivatives in $\Omega$ satisfying
	$$\|u|\calk^m_{a,\infty}(\Omega,M)\|
		:= \sum_{|\alpha|\leq m} \|\, \rho^{|\alpha|-a} \partial^\alpha u \, |L_\infty (\Omega)\|<\infty\,.$$
\end{defi}
On occasion we shall additionally use the convention $\calk^0_{0,p}(\Omega,M)=L_p(\Omega)$.

\begin{rem} 
	{\rm (i)} Most often the set $M$ will be the {\em singularity set} $S$ of the domain $\Omega$. In that situation we shall
	use the abbreviated notation $\calk^m_{a,p}(\Omega)=\calk^m_{a,p}(\Omega,S)$.
	\\
	{\rm (ii)} Up to equivalence of norms, the weight function $\rho$ can be replaced by its regularized version
	$\varrho(x)=\psi(\delta(x))$, where $\psi$ is a smooth, strictly monotone function on $(0,\infty)$ with $\psi(t)=t$ for
	$t<1/2$ and $\psi(t)=1$ for $t\geq 2$, and $\delta$ is the regularized distance to $M$ as constructed in
	\cite[Chapter VI.1]{St70}. In the sequel, we will not strictly distinguish between $\dist(\cdot,M)$, $\rho$, $\delta$ and
	$\varrho$, i.e. by an abuse of notation we habitually refer to all versions by the symbol $\rho$ (it will always be clear
	from the context which precise version will be required).\\
	{\rm (iii)} The space 
	$$C^{\infty}_{\ast}(\Omega,M):=\{\varphi\big|_{\Omega}: \ \varphi \in C^{\infty}_0(\R^d\setminus M)\},$$
i.e.  smooth functions with compact support outside $M$,  is dense in $\calk^m_{a,p}(\Omega,S)$. A proof can be found in \cite{Tr85}. \\
		{\rm (iv)}
			In \cite{MR2} more general weighted Sobolev spaces
			on    polyhedral domains  are discussed. In particular, our spaces $\calk^m_{a,p}(D,S)$, where $D\subset \R^3$ is a domain of polyhedral type according to Definition \ref{def:polyhedral-type} with singularity set $S$,   coincide with the
			classes	$V^{\ell,p}_{\beta,\delta} (D)$ if $m= \ell$, 
			\[
				\beta = (\beta_1, \ldots\, , \beta_k)= (\ell-a, \ldots \, , \ell-a) 
				\quad \mbox{ and }\quad
				\delta = (\delta_1, \ldots\, , \delta_{k'}) = ( \ell - a, \ldots \, ,\ell - a )\, .
			\]
			For the meaning of $k$ and $k'$ we refer to \cite[4.1.1]{MR2}.
\end{rem}

A crucial tool in many investigations for Kondratiev spaces is the following simple shift operator, which yields an isomorphism linking spaces with different weight parameters.

\begin{prop}\label{prop-shift}
	For $b\in\R$ define the mapping
	$$T_b:u\mapsto\rho^b u.$$
	This operator is an isomorphism from $\calk^m_{a,p}(\Omega,M)$ onto $\calk^m_{a+b,p}(\Omega,M)$.
\end{prop}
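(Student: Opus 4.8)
The plan is to establish the boundedness of $T_b$ from $\calk^m_{a,p}(\Omega,M)$ into $\calk^m_{a+b,p}(\Omega,M)$ and to note that its two-sided inverse is $T_{-b}$, which by the very same argument (with the roles of $a$ and $a+b$ interchanged) is bounded in the opposite direction. Together these two boundedness statements yield the isomorphism: surjectivity follows since $v = T_b(T_{-b}v)$ for every $v\in\calk^m_{a+b,p}(\Omega,M)$, and $T_{-b}v\in\calk^m_{a,p}(\Omega,M)$ by the boundedness of $T_{-b}$.

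The heart of the matter is a pointwise estimate for the derivatives of the weight. Working throughout with the regularized version of $\rho$ as in Remark (ii), which is smooth and strictly positive on $\Omega$, I would first record the standard bound
\[
|\partial^\gamma \rho(x)| \lesssim \rho(x)^{1-|\gamma|}, \qquad x \in \Omega,
\]
valid for the regularized distance constructed in \cite{St70}. From this, an elementary induction (or a Fa\`a di Bruno argument) on $|\gamma|$ delivers the key estimate
\[
|\partial^\gamma (\rho^b)(x)| \lesssim \rho(x)^{b-|\gamma|}, \qquad x \in \Omega,
\]
for every multi-index $\gamma$ and every $b\in\R$, with implied constants depending only on $b$, $|\gamma|$, and the dimension $d$.

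Next I would expand $\partial^\alpha(\rho^b u)$ via the Leibniz rule; since $\rho^b$ is smooth on $\Omega$ this is legitimate for weak derivatives, giving
\[
\partial^\alpha(\rho^b u) = \sum_{\beta\le\alpha}\binom{\alpha}{\beta}\,\partial^{\alpha-\beta}(\rho^b)\,\partial^\beta u.
\]
Multiplying by $\rho^{|\alpha|-(a+b)}$ and inserting the key estimate $|\partial^{\alpha-\beta}(\rho^b)|\lesssim\rho^{b-|\alpha|+|\beta|}$ makes the powers of $\rho$ telescope, so that each summand is dominated pointwise by $\rho^{|\beta|-a}|\partial^\beta u|$, which is precisely one of the terms measured by the $\calk^m_{a,p}$-norm (note $|\beta|\le|\alpha|\le m$). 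Taking $L_p$-norms, or $L_\infty$-norms when $p=\infty$, and summing over $|\alpha|\le m$ then yields $\|T_b u\,|\,\calk^m_{a+b,p}(\Omega,M)\|\lesssim\|u\,|\,\calk^m_{a,p}(\Omega,M)\|$.

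I expect the main obstacle to be exactly the derivative estimate for $\rho^b$: the raw distance to $M$ is merely Lipschitz, so its higher-order derivatives need not exist, and it is here that passing to the smooth regularized distance (together with the norm equivalence recorded in Remark (ii)) is indispensable. Once the bound $|\partial^\gamma\rho^b|\lesssim\rho^{b-|\gamma|}$ is in hand, the remaining power-counting and the symmetric treatment of $T_{-b}$ are routine.
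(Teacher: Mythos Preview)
Your proposal is correct and follows exactly the route sketched in the paper: the paper merely records that boundedness of $T_b$ follows by ``elementary calculations'' from the estimate $|\partial^\alpha\rho|\le c_\alpha\rho^{1-|\alpha|}$ for the regularized distance, and that $(T_b)^{-1}=T_{-b}$, which is precisely what you have spelled out in detail via the derived bound $|\partial^\gamma(\rho^b)|\lesssim\rho^{b-|\gamma|}$ and the Leibniz expansion.
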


The boundedness of $T_b$ follows by elementary calculations from the property
$$|\partial^\alpha\rho(x)|\leq c_\alpha\rho^{1-|\alpha|},\quad\alpha\in\N_0^d,$$
for the regularized distance $\rho$ (see \cite[Chapter VI.1]{St70}), and its inverse is given by $(T_b)^{-1}=T_{-b}$.

\subsection{Localization principles}

The following lemma allows to reduce the discussion of Kondratiev spaces on general domains of polyhedral type to any of the subdomains mentioned in Definitions \ref{def:polyhedral-type-R2} and  \ref{def-domain}.

\begin{lem}\label{lemma-deco}
	Let $D$, $(U_i)_i$, and $\Lambda_j$ with  $j=1, \ldots ,5$,  be as in Definitions  \ref{def:polyhedral-type-R2} and \ref{def-domain}. 
	Moreover, denote by $S$ the singularity set of $D$ and let $(\varphi_i)_i$ be a decomposition of unity subordinate to our
	covering, i.e., $\varphi_i \in C^\infty$, $\supp \varphi_i \subset U_i$, $0 \le \varphi_i \le 1$ and 
	$$\sum_i \varphi_i (x) = 1\qquad \mbox{for all}\quad x \in \overline{D}.$$
	We put $u_i := u \, \cdot \, \varphi_i$ in $D$.
	\\
	{\rm (i)} If $u\in \calk^m_{a,p}(D,S)$ then
	\begin{align*}
		\|u|\calk^m_{a,p}(D,S)\|^*
			&:=\max_{i\in\Lambda_1}\,\|u_i|W^m_{p}(D \cap U_i)\|
				+\max_{i\in\Lambda_2}\,\|u_i (\eta_i^{-1}(\,\cdot\,))|\calk^m_{a,p}(K,\{0\})\|\\
			&\quad +\max_{i\in\Lambda_3}\,\|u_i(\eta_i^{-1}(\,\cdot\,))|\calk^m_{a,p}(P,\Gamma)\|
				+\max_{i\in\Lambda_4}\,\|u_i(\eta_i^{-1}(\,\cdot\,))|\calk^m_{a,p}(I, M_\ell)\|\\
			&\quad +\max_{i\in\Lambda_5}\,\|u_i(\eta_i^{-1}(\,\cdot\,))|\calk^m_{a,p}(Q,M)\| 
	\end{align*}
	generates an equivalent norm on $\calk^m_{a,p}(D,S)$.
	\\
	{\rm (ii)}
	If $u:~D \to \mathbb{C}$ is a function such that the pieces $u_i$ satisfy
	\begin{itemize}
		\item
			$u_i \in W^m_{p}(D \cap U_i)$, $i\in \Lambda_1$;
		\item
			$u_i (\eta_i^{-1} (\, \cdot\, )) \in \calk^m_{a,p}(K,\{0\})$, $i\in \Lambda_2$;
		\item
			$u_i (\eta_i^{-1} (\, \cdot\, )) \in \calk^m_{a,p}(P, \Gamma)$, $i\in \Lambda_3$;
		\item
			$u_i (\eta_i^{-1} (\, \cdot\, )) \in \calk^m_{a,p}(I, M_\ell)$, $i\in \Lambda_4$;
		\item
			$u_i (\eta_i^{-1} (\, \cdot\, )) \in \calk^m_{a,p}(Q, M)$, $i\in \Lambda_5$;
	\end{itemize}
	then $u \in  \calk^m_{a,p}(D, S)$ and
	$$\|u|\calk^m_{a,p}(D,S)\|\lesssim\|u|\calk^m_{a,p}(D,S)\|^*\,.$$
\end{lem}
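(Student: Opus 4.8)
The plan is to prove both directions by reducing the global statement on $D$ to the model situations via the partition of unity $(\varphi_i)_i$, exploiting that the diffeomorphisms $\eta_j$ preserve the relevant function spaces up to equivalent norms. The conceptual core is that, away from the singularity set $S$, the weight $\rho$ is bounded above and below by positive constants, so that on the balls $U_i$ with $i\in\Lambda_1$ the Kondratiev norm is equivalent to the ordinary Sobolev norm $W^m_p$; while near $S$ the local structure is, after applying $\eta_j$, exactly one of the four model cones of Cases I--IV, on which the definition of the Kondratiev norm was tailored. Throughout I would use two standard facts: first, that multiplication by a fixed function $\varphi_i\in C^\infty_0(U_i)$ is a bounded operator on the relevant space (this follows from the Leibniz rule together with $\sup|\partial^\beta\varphi_i|<\infty$ and the observation that the weight factor $\rho^{|\alpha|-a}$ is the same for $u$ and for $\varphi_i u$); and second, that $C^\infty$-diffeomorphisms with the distance-equivalence property stated in Definitions~\ref{def:polyhedral-type-R2} and~\ref{def-domain} induce isomorphisms between the corresponding Kondratiev spaces, because the chain rule turns $\partial^\alpha(u\circ\eta^{-1})$ into a finite combination of derivatives of $u$ of order $\le|\alpha|$ with smooth bounded coefficients, the Jacobian of the change of variables is bounded above and below, and the weights match up to constants by the distance equivalence $\dist(x,S)\sim\dist(x,x^j)$ (resp. to $\Gamma^k$, $M^\ell$, $M'_m$).

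For part~(i) I would argue that $\|u\,|\,\calk^m_{a,p}(D,S)\|^*\lesssim\|u\,|\,\calk^m_{a,p}(D,S)\|$. Since the covering is finite, each summand/maximum in $\|\cdot\|^*$ is controlled: for $i\in\Lambda_1$ I estimate $\|\varphi_i u\,|\,W^m_p(D\cap U_i)\|$ by noting $\rho\sim 1$ on $\overline{U_i}$ (as $\overline{U_i}\cap S=\emptyset$) and using boundedness of multiplication by $\varphi_i$; for $i\in\Lambda_2,\dots,\Lambda_5$ I transport $\varphi_i u$ through $\eta_i^{-1}$ into the model space, using the diffeomorphism isomorphism above, and bound the model norm by the restriction of the global Kondratiev norm to $U_i\cap D$, which in turn is $\le\|u\,|\,\calk^m_{a,p}(D,S)\|$. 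The reverse inequality $\|u\,|\,\calk^m_{a,p}(D,S)\|\lesssim\|\cdot\|^*$ is exactly the content of part~(ii), so part~(i) follows once~(ii) is established together with the elementary upper bound; more precisely, writing $u=\sum_i u_i$ on $\overline D$ (valid since $\sum_i\varphi_i\equiv 1$ there), the quasi-triangle inequality gives $\|u\,|\,\calk^m_{a,p}(D,S)\|\le\sum_i\|u_i\,|\,\calk^m_{a,p}(D,S)\|$, and each term is handled as in~(ii).

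For part~(ii), the strategy is to glue the pieces. Given that each $u_i$ (after transport) lies in the appropriate model space, I reverse the transport: for $i\in\Lambda_1$, $u_i\in W^m_p(D\cap U_i)$ together with $\rho\sim 1$ on $U_i$ yields $u_i\in\calk^m_{a,p}(U_i\cap D,S)$ with comparable norm; for $i\in\Lambda_2,\dots,\Lambda_5$, applying $\eta_i$ (again via the chain-rule/Jacobian argument and the distance equivalence) shows $u_i\in\calk^m_{a,p}(U_i\cap D,S)$ with norm bounded by the respective model norm. Since $u_i$ is supported in $U_i$, extending by zero outside $U_i$ is harmless, and summing the finitely many contributions via the quasi-triangle inequality gives $u=\sum_i u_i\in\calk^m_{a,p}(D,S)$ with $\|u\,|\,\calk^m_{a,p}(D,S)\|\lesssim\|u\,|\,\calk^m_{a,p}(D,S)\|^*$.

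The main obstacle is the careful verification that the diffeomorphisms $\eta_j$ genuinely induce bounded isomorphisms between the model Kondratiev spaces and the local spaces on $U_j\cap D$: one must check that under the chain rule the \emph{weight powers} transform correctly, i.e. that $\rho(x)^{|\alpha|-a}$ and $\rho(\eta_j(x))^{|\alpha|-a}$ are comparable. This is precisely where the hypothesis ``the distance to $S$ is equivalent to the distance to $x^j$ (resp. $\Gamma^k,M^\ell,M'_m$)'' built into the definitions is used, together with the fact that the Jacobian $\det D\eta_j$ and its inverse are bounded on the compact set $\overline{U_j}$, so that the change of variables in the integral contributes only a bounded constant. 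Matching the mixed-order derivative terms (lower-order derivatives of $u$ appear after differentiating the composition) requires observing that they carry at most the same weight power, so they are dominated by the full model norm; this is routine but must be spelled out to obtain the two-sided estimate on each subdomain.
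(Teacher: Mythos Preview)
The paper does not actually supply a proof of this lemma; it is stated as a known localization principle and the authors implicitly defer to their companion work \cite{smcw-a, smcw-a-ext}. Your proposal lays out precisely the standard argument one expects there: bounded multiplication by the cut-off functions $\varphi_i$ via the Leibniz rule and $\rho\le 1$, equivalence of the Kondratiev and Sobolev norms on the patches $U_i$ with $i\in\Lambda_1$ because $\rho\sim 1$ there, and diffeomorphism invariance on the remaining patches driven by the chain rule together with the built-in distance-equivalence hypotheses. This is correct and there is nothing to compare against within the present paper.

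One small sharpening worth making explicit in your write-up: when you say ``the weight factor $\rho^{|\alpha|-a}$ is the same for $u$ and for $\varphi_i u$'', what you actually need after Leibniz is $\rho^{|\alpha|-a}=\rho^{|\beta|}\cdot\rho^{|\alpha-\beta|-a}$ with $\rho^{|\beta|}\le 1$, so the lower-order terms carry a \emph{more favourable} weight and are absorbed into the full norm. You allude to this at the end (``they carry at most the same weight power''), but stating it at the point where it is first used would make the argument cleaner.
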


The following proposition will be fundamental to our further investigations.

\begin{prop}[{\cite[Proposition 2.11]{smcw-a}}]\label{prop-localization}
	Let $D\subset\R^d$ be a domain of polyhedral type. For sets
	$$
		D_j=\{x\in D:2^{-j-1}<\rho(x)<2^{-j+1}\},\quad j\in\N_0,
	$$
	we assume that there exists a resolution of unity $(\varphi_j)_{j\in\N_0}$ subordinate to the cover $(D_j)_{j\in\N_0}$ of
	$D$, i.e. $\varphi_j(x)\geq 0$ for all $x\in D$, $j\in\N_0$, $\supp\varphi_j\subset D_j$ and
	$\sum_{j=0}^\infty\varphi_j(x)=1$ for all $x\in D$. Moreover, we assume	$|\partial^\alpha\varphi_j(x)|\leq 2^{j|\alpha|}$
	for all $x\in D_j$ and all multiindices $\alpha\in\N_0^d$.
	
	Then for $a\in\R$, $m\in\N$ and $1<p<\infty$ we have
	\begin{equation}\label{eq-localization}
		\|u|\calk^m_{a,p}(D,M)\|^p
			\sim\sum_{j=0}^\infty\|\varphi_j u|\calk^m_{a,p}(D,M)\|^p,\quad u\in\calk^m_{a,p}(D,M)\,.
	\end{equation}
\end{prop}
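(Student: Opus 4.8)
The plan is to prove the two-sided estimate \eqref{eq-localization} by establishing each inequality separately, relying on two elementary geometric facts about the dyadic decomposition. First, $\rho(x)\sim 2^{-j}$ for every $x\in D_j$, which is immediate from the definition $D_j=\{x:2^{-j-1}<\rho(x)<2^{-j+1}\}$; this lets the derivative bound on $\varphi_j$ be read as a bound in terms of $\rho$. Second, the family $(D_j)_{j\in\N_0}$ has bounded overlap: each $x\in D$ lies in at most $N$ of the sets $D_j$ for an absolute constant $N$ (indeed $2^{-j-1}<\rho(x)<2^{-j+1}$ confines $j$ to an interval of length $2$, so $N\le 2$). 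These facts, together with $\sum_j\varphi_j\equiv 1$ and $|\partial^\alpha\varphi_j|\le 2^{j|\alpha|}$ on $D_j$, carry the entire argument; throughout I would track that all implicit constants depend only on $m,p,a,d$ and $N$, never on $j$ or $u$.

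For the easy inequality $\|u|\calk^m_{a,p}(D,M)\|^p\lesssim\sum_j\|\varphi_j u|\calk^m_{a,p}(D,M)\|^p$, I would use $u=\sum_j\varphi_j u$ to write $\partial^\alpha u=\sum_j\partial^\alpha(\varphi_j u)$ and observe that at each point the sum ranges over at most $N$ indices. Applying the pointwise inequality $|\sum_{k=1}^N a_k|^p\le N^{p-1}\sum_k|a_k|^p$ (valid since $p\ge 1$), then integrating against $\rho^{p(|\alpha|-a)}$ and summing over $|\alpha|\le m$, yields the claim with constant $N^{p-1}$. This direction needs only the partition of unity and the bounded overlap; neither the weight equivalence nor the derivative bounds on $\varphi_j$ enter.

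For the reverse inequality $\sum_j\|\varphi_j u|\calk^m_{a,p}(D,M)\|^p\lesssim\|u|\calk^m_{a,p}(D,M)\|^p$, I would expand $\partial^\alpha(\varphi_j u)$ by the Leibniz rule and estimate each term on $\supp\varphi_j\subset D_j$. The crucial point is that there $|\partial^\beta\varphi_j|\le 2^{j|\beta|}\sim\rho^{-|\beta|}$, so writing $\gamma=\alpha-\beta$ gives $\rho^{|\alpha|-a}|\partial^\beta\varphi_j\,\partial^\gamma u|\lesssim\rho^{|\gamma|-a}|\partial^\gamma u|$; that is, every Leibniz term is controlled by a term of the unlocalized Kondratiev norm of order $\le m$. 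Taking $p$-th powers (the multi-index sum being finite), integrating over $D_j$, summing over $|\alpha|\le m$, and finally summing over $j$, the bounded overlap turns $\sum_j\int_{D_j}$ into at most $N\int_D$, which gives the estimate. Combining the two directions yields \eqref{eq-localization}.

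The argument is essentially routine; the one step requiring genuine care is converting the scale-invariant bound $|\partial^\alpha\varphi_j|\le 2^{j|\alpha|}$ into the pointwise weight bound $|\partial^\alpha\varphi_j|\lesssim\rho^{-|\alpha|}$ on $D_j$ and matching it against the weight $\rho^{|\alpha|-a}$, so that the rise in the weight exponent exactly compensates the drop in the differentiation order. I expect the main (minor) obstacle to be the bookkeeping: verifying the overlap constant $N$ and confirming that the constants in $\sim$ and $\lesssim$ are uniform in $j$, since this uniformity is precisely what makes the termwise summation over $j$ legitimate.
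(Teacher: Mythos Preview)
Your proposal is correct and is exactly the standard argument. The present paper does not supply its own proof of this proposition---it is quoted from \cite[Proposition~2.11]{smcw-a}---but the very computation you outline for the harder direction (Leibniz rule, then $|\partial^\beta\varphi_j|\le 2^{j|\beta|}\sim\rho^{-|\beta|}$ on $D_j$ so that $\rho^{|\alpha|-a}|\partial^\beta\varphi_j\,\partial^{\alpha-\beta}u|\lesssim\rho^{|\alpha-\beta|-a}|\partial^{\alpha-\beta}u|$, followed by bounded overlap to sum in $j$) reappears verbatim in Step~1 of the proof of Theorem~\ref{thm-equivalence}, where the boundedness of the retraction $\Psi$ is established.
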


For the existence of such a resolution of unity we refer to \cite[Proposition 17, Lemmas 18--19]{smcw-a-ext}. We particularly obtain

\begin{cor}\label{cor-localization}
	Let $D$ be any one of the unbounded versions of the subdomains mentioned in  Definitions \ref{def:polyhedral-type-R2},   \ref{def-domain},   and Remark
	\ref{rem-def-domain}, with singularity set $S$. Then for $D$ as well as $\Omega=\R^d\setminus S$ there exist corresponding
	resolutions of unity such that Proposition \ref{prop-localization} becomes applicable.
\end{cor}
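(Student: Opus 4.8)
The plan is to exhibit, for every subdomain $D$ in the enumerated list and for $\Omega=\R^d\setminus S$, one explicit dyadic resolution of unity adapted to the regularized distance to the relevant singularity set, and then to verify the support and derivative conditions required by Proposition \ref{prop-localization}. The single geometric ingredient is already at hand: for any closed set $M$ — in particular the vertex, the axis, the edge, the union of edges, or the full singularity set attached to Cases I--IV and to $\Omega$ — Stein's regularized distance from \cite[Chapter VI.1]{St70} furnishes a function $\rho$ on $D$ that is comparable to $\dist(\cdot,M)$, smooth away from $M$, bounded by $1$, and satisfies $|\partial^\alpha\rho(x)|\le c_\alpha\,\rho(x)^{1-|\alpha|}$ for all $\alpha\in\N_0^d$ — precisely the bound already invoked for Proposition \ref{prop-shift}. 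Since this is the only feature of the geometry that enters, a single construction will handle all the listed subdomains at once.

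First I would fix a one-dimensional dyadic partition: a nonnegative $\psi\in C^\infty(0,\infty)$ with $\supp\psi\subset[1/2,2]$ and $\sum_{j\in\Z}\psi(2^jt)=1$ for all $t>0$, obtained as a telescoping difference $\psi(t)=\Theta(t/2)-\Theta(t)$ of a smooth cut-off $\Theta$ with $\Theta\equiv 1$ on $(0,1/2]$ and $\Theta\equiv 0$ on $[1,\infty)$. I then simply set $\varphi_j(x):=\psi\big(2^j\rho(x)\big)$ for $j\in\N_0$. Because $\rho\le 1$ on $D$, the negative-index terms of the full partition vanish identically (their arguments do not exceed $1/2$ and $\psi(1/2)=0$), so $\sum_{j=0}^\infty\varphi_j\equiv 1$ on $D$, each $\varphi_j\ge 0$, and $\supp\varphi_j\subset\{2^{-j-1}\le\rho\le 2^{-j+1}\}$, which lies in the closure of the shell $D_j$ and overlaps only its immediate neighbours; hence the cover is locally finite.

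The remaining point is the derivative estimate. On $\supp\varphi_j$ one has $\rho\sim 2^{-j}$, so the regularized-distance bound gives $|2^j\partial^\beta\rho(x)|\lesssim 2^j\rho^{1-|\beta|}\sim 2^{j|\beta|}$ for every multi-index $\beta$. Expanding $\partial^\alpha\varphi_j=\partial^\alpha\big(\psi(2^j\rho)\big)$ by the chain rule (Fa\`a di Bruno's formula) produces a finite sum of terms $\psi^{(k)}(2^j\rho)\prod_i\big(2^j\partial^{\beta_i}\rho\big)$ with $\sum_i\beta_i=\alpha$; since $\psi^{(k)}$ is bounded and $\prod_i 2^{j|\beta_i|}=2^{j|\alpha|}$, we obtain $|\partial^\alpha\varphi_j(x)|\le c_\alpha\,2^{j|\alpha|}$ for all $j\in\N_0$ and $x\in D$, which is exactly the hypothesis of Proposition \ref{prop-localization} (the constant $c_\alpha$ is harmless, since the norm equivalence only uses the bound up to constants, and it can in any case be absorbed by a fixed rescaling of $\psi$). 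Running the identical construction with $M$ replaced by $S$ yields the resolution of unity for $\Omega=\R^d\setminus S$.

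I expect the main obstacle to be bookkeeping at the two ends of the scale rather than any deep step: ensuring that the partition sums to $1$ with the index starting at $j=0$ (handled by the truncation $\rho\le 1$, which lumps all coarse scales into $\varphi_0$), and reconciling the closed support $\{2^{-j-1}\le\rho\le 2^{-j+1}\}$ of $\varphi_j$ with the open shells $D_j$ — a standard technicality resolved either by allowing supports in $\overline{D_j}$ or by infinitesimally widening the shells, and carried out in full detail in the cited construction \cite[Proposition 17, Lemmas 18--19]{smcw-a-ext}.
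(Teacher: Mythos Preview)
Your construction is correct and is precisely the approach the paper has in mind: the paper itself gives no proof of Corollary \ref{cor-localization} but simply refers to \cite[Proposition 17, Lemmas 18--19]{smcw-a-ext} for the existence of the required resolution of unity, and your argument --- compose a one-dimensional dyadic partition with Stein's regularized distance and read off the support and derivative bounds from $|\partial^\alpha\rho|\lesssim\rho^{1-|\alpha|}$ --- is exactly the standard construction carried out there, as you yourself note in your final paragraph. The two residual technicalities you flag (the constant $c_\alpha$ in place of $1$, and closed versus open supports) are genuine but harmless, and your handling of them is accurate.
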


%%%%%%%%%%%%%%%%%%%%%%%%%%%%%%%%%%%%%%%%%%%%%%%%%%%%%%%%%%%%%%%%%%%%%%%%%%%%%%%%%%%%%%%%%%%%%%%%%%%%%%%%%%%%%%%%%%%%%%%%%%%%%%
%%%%%%%%%%%%%%%%%%%%%%%%%%%%%%%%%%%%%%%%%%%%%%%%%%%%%%%%%%%%%%%%%%%%%%%%%%%%%%%%%%%%%%%%%%%%%%%%%%%%%%%%%%%%%%%%%%%%%%%%%%%%%%

%%%%%%%%%%%%%%%%%%%%%%%%%%%%%%%%%%%%%%%%%%%%%%%%%%%%%%%%%%%%%%%%%%%%%%%%%%%%%%%%%%%%%%%%%%%%%%%%%%%%%%%%%%%%%%%%%%%%%%%%%%%%%%
%%%%%%%%%%%%%%%%%%%%%%%%%%%%%%%%%%%%%%%%%%%%%%%%%%%%%%%%%%%%%%%%%%%%%%%%%%%%%%%%%%%%%%%%%%%%%%%%%%%%%%%%%%%%%%%%%%%%%%%%%%%%%%

%%%%%%%%%%%%%%%%%%%%%%%%%%%%%%%%%%%%%%%%%%%%%%%%%%%%%%%%%%%%%%%%%%%%%%%%%%%%%%%%%%%%%%%%%%%%%%%%%%%%%%%%%%%%%%%%%%%%%%%%%%%%%%
%%%%%%%%%%%%%%%%%%%%%%%%%%%%%%%%%%%%%%%%%%%%%%%%%%%%%%%%%%%%%%%%%%%%%%%%%%%%%%%%%%%%%%%%%%%%%%%%%%%%%%%%%%%%%%%%%%%%%%%%%%%%%%

\subsection{Extension operators}\label{sect-extension}

An indispensable tool in discussions of function spaces on domains are extension operators, i.e. bounded linear operators extending functions from the given domain $D\subset\R^d$ to the whole space $\R^d$. In the setting of unweighted and weighted Sobolev spaces, the extension operator defined by Stein in \cite[Chapter VI.3]{St70} is known to be a universal extension operator, i.e.   it is a bounded linear operator $\mathfrak{E}:W^m_p(D)\rightarrow W^m_p(\R^d)$ for arbitrary Lipschitz domains $D\subset\R^d$ and all parameters $m\in\N_0$ and $1<p<\infty$. In \cite{Hansen} this was extended to Kondratiev spaces. The core result can be formulated as follows:
\begin{prop}\label{prop-stein-kondratiev-1}
	Let $D\subset\R^d$ be a special Lipschitz-domain, i.e.
	$$D=\{x\in\R^d:x=(x',x_d), x'\in\R^{d-1},x_d>\omega(x')\}$$
	for some Lipschitz-continuous function $\omega:\R^{d-1}\rightarrow\R$. Further assume $\R^\ell_\ast:=\{x\in \R^d: \ x_{l+1}=\ldots=x_d=0\}\subset\partial D$,
	i.e. $\omega(x_1,\ldots,x_\ell,0,\ldots,0)=0$ for all $x_1,\ldots,x_\ell\in\R$. Then the Stein-extension operator
	$\mathfrak{E}$ on $D$ maps $\calk^m_{a,p}(D,\R^\ell_\ast)$ boundedly into $\calk^m_{a,p}(\R^d,\R^\ell_\ast)$.
\end{prop}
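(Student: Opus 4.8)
The plan is to reduce the weighted (Kondratiev) estimate to Stein's known unweighted bound $\mathfrak{E}:W^m_p(D)\to W^m_p(\R^d)$ by localizing with respect to the distance to $M=\R^\ell_\ast$ and systematically exploiting the hypothesis $\R^\ell_\ast\subset\partial D$. Recall that on the exterior $\R^d\setminus\overline{D}$ Stein's operator has the form
\be
	\mathfrak{E}u(x',x_d)=\int_1^\infty u\big(x',x_d+\lambda\,\delta^*(x)\big)\,\psi(\lambda)\,d\lambda,
\ee
where $\delta^*$ is a regularized distance to $\partial D$ with $\delta^*(x)\sim\dist(x,\partial D)$ and $|\partial^\gamma\delta^*(x)|\lesssim\dist(x,\partial D)^{1-|\gamma|}$, and $\psi$ is a fixed function of rapid decay obeying the normalization and moment conditions that guarantee $\mathfrak{E}u\in W^m_p(\R^d)$. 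Since $\mathfrak{E}u=u$ on $D$, only the contribution on $\R^d\setminus\overline{D}$ has to be controlled. The weight is $\rho(x)=\min\{1,\dist(x,\R^\ell_\ast)\}$; writing $x=(x',x_d)$ and abbreviating by $r$ the length of the transverse block $(x_{\ell+1},\dots,x_{d-1})$ left unchanged by the reflection, one has $\dist(x,\R^\ell_\ast)=(r^2+x_d^2)^{1/2}$. The case $m=0$ is immediate, as $\calk^0_{a,p}$ is then a weighted $L_p$-space and the weight comparison \eqref{plan-key} below suffices; so I assume $m\in\N$.

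The geometric heart of the matter is the estimate: for $x\in\R^d\setminus\overline{D}$, $\lambda\ge1$, and $y_\lambda(x):=(x',x_d+\lambda\delta^*(x))$,
\be\label{plan-key}
	\rho(x)\ \lesssim\ \rho\big(y_\lambda(x)\big)\ \lesssim\ (1+\lambda)\,\rho(x),
\ee
with constants depending only on the Lipschitz constant $L$ of $\omega$. The upper bound follows from $|y_\lambda(x)-x|=\lambda\delta^*(x)$ together with $\delta^*(x)\sim\dist(x,\partial D)\le\dist(x,\R^\ell_\ast)$, the last inequality being exactly where $\R^\ell_\ast\subset\partial D$ enters (the cap at $1$ only helps when $x$ is far from $M$). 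For the lower bound one uses that $\omega$ vanishes on $\R^\ell_\ast$, whence $|\omega(x')|\le L\,r$; this forces the vertical gap $\omega(x')-x_d\sim\delta^*(x)$ to be comparable to $\dist(x,\partial D)$, and a short case distinction in the signs of $x_d$ and of $x_d+\lambda\delta^*(x)$ shows that the reflected point can never come closer to $\R^\ell_\ast$ than a fixed fraction of $\rho(x)$: the only way its $d$-th coordinate can be small is $|x_d|\sim r$, in which case $\rho(x)\sim r\le\rho(y_\lambda(x))$.

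With \eqref{plan-key} at hand I would localize. Since $\R^\ell_\ast$ is a model of a dihedral-type singularity set as in Case III, Corollary \ref{cor-localization} (through its underlying construction) provides, for both $\R^d$ and the domain $D$, dyadic resolutions of unity $(\varphi_j)_j$ subordinate to the shells $\{2^{-j-1}<\rho<2^{-j+1}\}$ for which Proposition \ref{prop-localization} applies; thus $\|\mathfrak{E}u\,|\,\calk^m_{a,p}(\R^d,\R^\ell_\ast)\|^p\sim\sum_j\|\varphi_j\mathfrak{E}u\,|\,\calk^m_{a,p}\|^p$ and, likewise, $\|u\,|\,\calk^m_{a,p}(D,\R^\ell_\ast)\|^p\sim\sum_j\|\varphi_j u\,|\,\calk^m_{a,p}\|^p$. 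On the $j$-th shell I would rescale by the factor $2^j$: the rescaled domain is again a special Lipschitz domain with the \emph{same} Lipschitz constant $L$ and the same vanishing property on $\R^\ell_\ast$, so the correspondingly rescaled Stein operator is bounded $W^m_p\to W^m_p$ with norm independent of $j$. Moreover the rescaling turns the shell-wise Kondratiev norm into $2^{(a-d/p)j}$ times the plain $W^m_p$-norm of the rescaled function, with the \emph{same} exponential factor on the source and target sides, so that factor cancels. This reduces the shell-wise estimate to Stein's unweighted theorem, uniformly in $j$; in particular the negative powers of $\dist(\,\cdot\,,\partial D)$ produced by differentiating $\mathfrak{E}u$ are precisely those already handled in the unweighted setting and are untouched by the smooth, $\dist(\,\cdot\,,\partial D)$-independent weight $\rho$.

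It remains to sum the shells, and this is the step I expect to be the main obstacle: the piece $\varphi_j\mathfrak{E}u$ depends not on $\varphi_j u$ alone but, through the reflection, on $u$ over several neighbouring shells. Here \eqref{plan-key} is decisive. It shows that the reflection maps the $j$-th shell only into shells $j'$ with $j'\le j+C$, and that reaching a much deeper shell $j'\ll j$ (where $\rho$ is larger) forces $\lambda\gtrsim 2^{\,j-j'}$, a range in which $\psi$ is negligible by its rapid decay. Consequently the shells are coupled by a matrix whose off-diagonal entries decay faster than any power of $2^{|j-j'|}$, and a Schur-type (equivalently, discrete Young) estimate yields $\sum_j\|\varphi_j\mathfrak{E}u\,|\,\calk^m_{a,p}\|^p\lesssim\sum_{j'}\|\varphi_{j'}u\,|\,\calk^m_{a,p}\|^p$, which is exactly the desired bound. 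The crux throughout is the mismatch between the distance to $M$ defining $\rho$ and the distance to $\partial D$ driving the reflection; it is resolved entirely by \eqref{plan-key}, whose proof rests on $\R^\ell_\ast\subset\partial D$.
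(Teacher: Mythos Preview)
The paper does not prove this proposition; it is quoted from \cite{Hansen}. The argument there proceeds line by line in the vertical direction: one first derives the pointwise bound
\[
	|\mathfrak{E}u(x',x_d)|\ \lesssim\ (\omega(x')-x_d)\int_{2\omega(x')-x_d}^\infty|u(x',s)|\,(s-x_d)^{-2}\,ds,
	\qquad x_d<\omega(x'),
\]
together with its analogues for the derivatives, and then controls the weighted $L_p$–norm on each line $\{x'\}\times\R$ by a one–dimensional \emph{weighted} Hardy inequality with weights $t\,\rho(x',\omega(x')-t)^{\beta}$ and $t^{2}\rho(x',t+\omega(x'))^{\beta}$. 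No dyadic localization in $\rho$ is used; the weight is carried through the Hardy estimate directly.

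Your route is genuinely different: you discretize in $\rho$, isolate the two–sided comparison \eqref{plan-key}, and try to reduce each shell to Stein's unweighted bound. The estimate \eqref{plan-key} is correct---the lower bound follows, as you indicate, from $\delta^*(x)\ge(1+\varepsilon)(\omega(x')-x_d)$ for some fixed $\varepsilon>0$ and $|\omega(x')|\le Lr$---and it is indeed the geometric heart of the matter. The gap is in the reduction step. Dilating the $j$-th shell does produce a special Lipschitz domain with the same constant $L$, but the \emph{given} operator $\mathfrak{E}$ (built from the regularized distance $\delta^*$ of the original $D$) does not become Stein's operator on the dilated domain: $\delta^*$ is not homogeneous unless $\omega$ is. What Stein's unweighted theorem hands you, used as a black box, is only $\|\mathfrak{E}(\varphi_{j'}u)|W^m_p(\R^d)\|\lesssim\|\varphi_{j'}u|W^m_p(D)\|$; converting this into a Kondratiev estimate on an exterior shell $j\gg j'$ is exactly where the weight bites, since the exponents $|\alpha|-a$ change sign with $|\alpha|$ and the mismatch factor $2^{(j-j')(|\alpha|-a)p}$ can grow. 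The rapid decay of $\psi$ does ultimately compensate, but checking this forces you inside Stein's derivative formulae to track the powers of $\delta^*(x)$ and of $\lambda$---which is precisely the computation \cite{Hansen} packages as a weighted Hardy inequality. So your Schur step, as written, still owes the key estimate that the coupling coefficient between shells $j$ and $j'$ decays like $2^{-N|j-j'|}$ in Kondratiev norm; establishing that is essentially the same analysis as in \cite{Hansen}, reorganized in dyadic form.
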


However, during discussions about the definition of the domains under consideration we noted that the subsequent extension to domains of polyhedral type proposed in \cite{Hansen} contained a gap. This extension uses the covering required in Definition \ref{def-domain} together with a subordinate decomposition of unity. However, as argued in Remark \ref{rem-polyhedral-cone}, in the special case of a polyhedral cone such a decomposition of unity involving finitely many smooth functions with compact support inside the subdomains cannot exist.

We addressed this gap in \cite{HSS} for $d=3$  in the even more general setting of the spaces $V^{m,p}_{\beta,\delta}(D)$ which contain the Kondratiev scale $\calk^m_{a,p}(D)$ as a special case.  In particular, \cite[Theorem~3.1]{HSS} gives the following result.

\begin{satz}\label{thm-stein-pcone}
	Let $K\subset\R^3$ be a (bounded or unbounded) polyhedral cone as in {\bf Case IV} with vertex in $0$. Let $a\in\R$, $m\in\N$,  and
	$1\leq p<\infty$. Then there exists	a universal bounded linear extension operator 
	$\mathfrak{E}:\calk^m_{a,p}(K,S)\rightarrow\calk^m_{a,p}(\R^3\setminus S,S)$, where $S$ is the singularity set of $K$.
\end{satz}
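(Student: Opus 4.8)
The plan is to turn the self-similarity of the cone into an effective tool: reduce the construction to a single \emph{reference layer}, extend that layer by the wedge results built on Proposition~\ref{prop-stein-kondratiev-1}, and then reassemble a global operator from rescaled copies of this reference extension. This is exactly the ``infinite family of subdomains'' foreseen in Remark~\ref{rem-polyhedral-cone}, and it is here that the gap described after Proposition~\ref{prop-stein-kondratiev-1} gets repaired. Write $S$ for the singularity set (the vertex together with the edges $M_1,\dots,M_n$) and $\rho(x)=\dist(x,S)$, and decompose $K$ into the dyadic layers $D_j=\{x\in K:2^{-j-1}<|x|<2^{-j+1}\}$, where $j\ge j_0$ for a bounded cone (the finitely many charts near the outer cap being handled by the classical Stein operator) and $j\in\Z$ in the unbounded case. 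Because $K$ and $S$ are cones, the dilation $x\mapsto 2^{j}x$ carries a fixed reference layer $D_0$ onto $D_j$, and $\rho$ is homogeneous, $\rho(2^{j}x)=2^{j}\rho(x)$. I fix a resolution of unity $(\varphi_j)$ subordinate to $(D_j)$ with $|\partial^\alpha\varphi_j|\lesssim 2^{j|\alpha|}\lesssim\rho(x)^{-|\alpha|}$ on $D_j$; since these are precisely the multiplier bounds needed, the localization $\|u\,|\,\calk^m_{a,p}(K,S)\|^p\sim\sum_j\|\varphi_j u\,|\,\calk^m_{a,p}(K,S)\|^p$ holds (the analogue of Proposition~\ref{prop-localization} for layers measured by distance to the vertex), and likewise on the target $\R^3\setminus S$ via Corollary~\ref{cor-localization}.

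First I would construct a bounded extension $\mathfrak{E}'\colon\calk^m_{a,p}(D_0,S\cap\overline{D_0})\to\calk^m_{a,p}(\R^3,S)$ that is the identity on $D_0$. I cover $D_0$ by finitely many charts. Away from the edges the boundary is smooth and $\rho\sim1$, so there one simply invokes the classical unweighted Stein operator. Near an edge a $C^\infty$-diffeomorphism straightens the chart into a bounded wedge $W=\{(r,\varphi,z):0<r<1,\ 0<\varphi<\varphi_0,\ 0<z<1\}$ (Case~III), with $\rho=r$ the distance to the edge. The decisive point is that, after a rotation, the planar sector $\{0<\varphi<\varphi_0\}$ is the special Lipschitz domain $\{x_1>\omega(x_2)\}$ with $\omega(x_2)=k\,|x_2|$ — with $k>0$ for convex and $k<0$ for reentrant openings, so every $\varphi_0\in(0,2\pi)$ is admissible — whence $W$ is, up to the product with the edge variable $z$, a special Lipschitz domain whose weight is the distance to the line $\R^1_\ast$ along the edge. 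Thus Proposition~\ref{prop-stein-kondratiev-1} becomes applicable after first extending in the edge direction $z$ (where, $\rho=r$ being constant along the extension lines and Stein's operator not mixing values on distinct normal lines, the step is essentially unweighted) and then extending across the sector. Gluing the finitely many chart-extensions by the usual Stein partition-of-unity formula produces $\mathfrak{E}'$.

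Next I transport $\mathfrak{E}'$ to every layer by conjugation with the dilation $(S_j g)(x):=g(2^{j}x)$, setting $\mathfrak{E}_j:=S_j\circ\mathfrak{E}'\circ S_j^{-1}$, which extends from $D_j$ to $\R^3$ and is the identity on $D_j$. Using $\rho(2^{j}x)=2^{j}\rho(x)$ one checks that $S_j$ multiplies the $\calk^m_{a,p}$-norm by the single factor $2^{j(a-3/p)}$, \emph{independently of the order $|\alpha|$ of the derivative}, because the weight exponent $|\alpha|-a$ exactly absorbs the $2^{j|\alpha|}$ created by differentiating $S_j g$. Hence these factors cancel under the conjugation and the $\mathfrak{E}_j$ are bounded uniformly in $j$, with $\|\mathfrak{E}_j\|\le\|\mathfrak{E}'\|$.

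Finally I assemble the operator. With a cut-off $\Psi$ equal to $1$ on the deep interior of $K$ and $\Phi=1-\Psi$ near $\partial K$, set $\mathfrak{E}u=\Psi u+\Phi\sum_j\mathfrak{E}_j(\varphi_j u)$, so that $\mathfrak{E}u|_K=u$. Boundedness then combines the three ingredients: the target-side localization reduces $\|\mathfrak{E}u\,|\,\calk^m_{a,p}(\R^3\setminus S,S)\|^p$ to a sum over dyadic layers; the supports $\supp\mathfrak{E}_j(\varphi_j u)\subset\{2^{-j-2}<|x|<2^{-j+2}\}$ overlap only boundedly, so each layer piece is controlled by finitely many $\|\mathfrak{E}_j(\varphi_j u)\|^p\lesssim\|\varphi_j u\|^p$; and the source-side localization sums these back to $\|u\,|\,\calk^m_{a,p}(K,S)\|^p$. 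I expect the genuine obstacle to be precisely this passage to the vertex limit, i.e.\ showing that infinitely many rescaled extensions, though each only uniformly bounded, combine into one bounded operator without the error accumulating at $0$; the uniform scaling bound of the previous step and the bounded overlap of supports are exactly what make this work, and for a bounded cone only the vertex end contributes infinitely many layers. Universality (independence of $a$, $m$, $p$) is inherited directly from the Stein operator used on the wedges, since the whole construction never refers to these parameters.
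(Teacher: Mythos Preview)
Your proposal is correct and matches the approach the paper takes (deferring details to \cite{HSS}): dyadic layering by distance to the vertex, a reference-layer extension assembled from finitely many wedge charts via Proposition~\ref{prop-stein-kondratiev-1} (with the two-step extension, first along the edge direction where the weight is constant, then across the sector), transport to all layers by conjugating with dilations---the homogeneity $\rho(2^jx)=2^j\rho(x)$ making the scaling factor $2^{j(a-3/p)}$ cancel so that the family $(\mathfrak{E}_j)_j$ is uniformly bounded---and reassembly through a partition of unity with bounded overlap. Your gluing formula differs only cosmetically from the Stein-type normalization $\mathfrak{E}f=\Phi^{-1}\sum_j\varphi_j\,\mathfrak{E}_j(\varphi_j f)$ with $\Phi=\sum_j\varphi_j^2$ used there.
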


The notion of a universal extension operator refers to the fact that while the norm depends on $a$, $m$,  and $p$, the construction of the operator itself does not depend on these parameters.

Subsequently,  in \cite[Theorem~3.14]{HSS} it was also shown how to extend Theorem \ref{thm-stein-pcone}   from polyhedral cones to domains of polyhedral type  $D\subset \R^3$.  Together with what was outlined above, we are now even able to extend this further to general polyhedral domains in $\R^d$. 
%The results of this section, together with the ones from \cite{Hansen} are summarized in the following theorem.

\begin{satz}\label{thm-stein-general}
	Let $D\subset\R^d$ be a domain of polyhedral type in the sense of Definitions \ref{def:polyhedral-type-R2} and  \ref{def-domain} with singularity set $S$.
	Then there exists a universal bounded linear extension operator $\mathfrak{E}$ for $\calk^m_{a,p}(D,S)$,
	i.e. it maps $\calk^m_{a,p}(D,S)$ into $\calk^m_{a,p}(\R^d\setminus S,S)$ for all $m\in\N$, $1<p<\infty$ and $a\in\R$.
\end{satz}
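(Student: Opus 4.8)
The plan is to reduce everything, by means of the localization principle of Lemma \ref{lemma-deco}, to the model domains of Cases I--IV, to extend on each model domain separately, and to reassemble the pieces by a finite smooth partition of unity. Let $(U_i)_i$, $i\in\Lambda_1\cup\cdots\cup\Lambda_5$, be the finite covering from Definitions \ref{def:polyhedral-type-R2} and \ref{def-domain}, with associated diffeomorphisms $\eta_i$, and let $(\varphi_i)_i$ be a subordinate decomposition of unity as in Lemma \ref{lemma-deco}. For each $i$ fix an auxiliary cutoff $\psi_i\in C_0^\infty(U_i)$ with $\psi_i\equiv 1$ on a neighbourhood of $\supp\varphi_i$. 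Writing $u=\sum_i\varphi_i u$ on $D$, I would set
\[
	\mathfrak{E}u:=\sum_i\psi_i\cdot\mathfrak{E}_i(\varphi_i u),
\]
where $\mathfrak{E}_i$ is a local extension operator for the $i$-th piece (see the next paragraph), transported back to $U_i$ through $\eta_i$ and then extended by zero outside $\supp\psi_i$. Because $\psi_i\equiv1$ on $\supp\varphi_i$, $\supp\psi_i\subset U_i$, and $\mathfrak{E}_i$ reproduces $\varphi_iu$ on $U_i\cap D$, each summand coincides with $\varphi_iu$ on $D$; hence $\mathfrak{E}u|_D=u$, and since the covering is finite $\mathfrak{E}$ is a well-defined linear operator into functions on $\R^d\setminus S$.

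It remains to supply the local operators $\mathfrak{E}_i$. For $i\in\Lambda_1$ the weight $\rho$ is bounded above and below on $U_i\cap D$, so $\calk^m_{a,p}(D\cap U_i)$ agrees with the ordinary Sobolev space $W^m_p(D\cap U_i)$, and I would take Stein's universal extension for the Lipschitz domain $D\cap U_i$. For the singular pieces one passes to the model coordinates $\eta_i(U_i\cap D)$, where the set governing the weight is a linear subspace: the vertex $\{0\}$ in Case I, the edge $\Gamma$ in Case II, and $M_\ell$ in Case III, with $\rho$ the distance to it. After a rotation, and a further localization near the boundary, each of these model domains fits the hypotheses of Proposition \ref{prop-stein-kondratiev-1} with $\ell=0$, $\ell=1$ and general $\ell$, respectively; the decisive point is that the distance to the singular subspace is constant along the lines used in Stein's extension, so that the weight plays the role of a harmless constant factor in the fibrewise estimates. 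None of these arguments uses the value of $d$, so they carry over verbatim to arbitrary dimension. For $i\in\Lambda_5$ (which occurs only for $d=3$) the model domain is a polyhedral cone and Theorem \ref{thm-stein-pcone} furnishes the extension directly. Transporting the results back through $\eta_i$ relies on the fact that a $C^\infty$-diffeomorphism induces an isomorphism between the corresponding Kondratiev spaces, the two weights being equivalent by the distance assumptions built into Definitions \ref{def:polyhedral-type-R2} and \ref{def-domain}.

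For boundedness I would use that multiplication by a function in $C_0^\infty$ is bounded on $\calk^m_{a,p}(\R^d\setminus S,S)$ -- the additional weight factors produced by the Leibniz rule are absorbed since $\rho\lesssim 1$ -- together with the finiteness of the covering, to obtain
\[
	\|\mathfrak{E}u\,|\,\calk^m_{a,p}(\R^d\setminus S,S)\|^p
	\lesssim\sum_i\|\mathfrak{E}_i(\varphi_i u)\,|\,\calk^m_{a,p}(\R^d\setminus S,S)\|^p
	\lesssim\sum_i\|\varphi_i u\,|\,\calk^m_{a,p}(\,\cdot\,)\|^p,
\]
the last norms being taken in the respective local spaces; by Lemma \ref{lemma-deco} the right-hand side is controlled by $\|u\,|\,\calk^m_{a,p}(D,S)\|^p$. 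Since the covering, the functions $\varphi_i$ and $\psi_i$, the diffeomorphisms and Stein's operator are all chosen without reference to $a$, $m$ or $p$ -- these parameters enter only the norm bounds -- the resulting $\mathfrak{E}$ is universal. I expect the main obstacle to lie in the second paragraph, namely in verifying that the cone-type model domains of Cases I and II in arbitrary dimension really do reduce to the special Lipschitz setting of Proposition \ref{prop-stein-kondratiev-1} -- the presence of a vertex, where several graph patches meet at a single point, is the delicate point -- whereas the dihedral case and the gluing are routine; as in the three-dimensional treatment of \cite{HSS}, one has to make sure that no step secretly depends on the dimension.
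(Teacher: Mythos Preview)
Your proposal is correct and follows essentially the same route as the paper: the paper's own proof simply cites Proposition~\ref{prop-stein-kondratiev-1} and Theorem~\ref{thm-stein-pcone} for the model subdomains, notes that these transfer under smooth diffeomorphisms, and then invokes Lemma~\ref{lemma-deco} together with a ``standard gluing argument as in \cite[Lemma~3.7]{HSS}'' --- exactly the partition-of-unity reassembly you spell out with the cutoffs $\psi_i$. Your write-up is considerably more explicit than the paper's three-sentence sketch, and your caveat about the vertex situation in Cases~I--II is reasonable, though in fact a smooth cone with vertex at $0$ is already a special Lipschitz domain $\{x_d>c|x'|\}$ after rotation, so Proposition~\ref{prop-stein-kondratiev-1} with $\ell=0$ applies directly without multiple graph patches.
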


\begin{proof}
	The results from Proposition \ref{prop-stein-kondratiev-1} and Theorem \ref{thm-stein-pcone} yield extensions for all
	types of subdomains referred to in Definitions \ref{def:polyhedral-type-R2} and  \ref{def-domain}. Clearly those results transfer also to smooth deformations
	of such domains, and hence together with Lemma \ref{lemma-deco} and yet another standard gluing argument as in \cite[Lemma~3.7]{HSS} we finally arrive at an extenstion operator for general domains of polyhedral type.
\end{proof}

%%%%%%%%%%%%%%%%%%%%%%%%%%%%%%%%%%%%%%%%%%%%%%%%%%%%%%%%%%%%%%%%%%%%%%%%%%%%%%%%%%%%%%%%%%%%%%%%%%%%%%%%%%%%%%%%%%%%%%%%%%%%%%
%%%%%%%%%%%%%%%%%%%%%%%%%%%%%%%%%%%%%%%%%%%%%%%%%%%%%%%%%%%%%%%%%%%%%%%%%%%%%%%%%%%%%%%%%%%%%%%%%%%%%%%%%%%%%%%%%%%%%%%%%%%%%%

%%%%%%%%%%%%%%%%%%%%%%%%%%%%%%%%%%%%%%%%%%%%%%%%%%%%%%%%%%%%%%%%%%%%%%%%%%%%%%%%%%%%%%%%%%%%%%%%%%%%%%%%%%%%%%%%%%%%%%%%%%%%%%
%%%%%%%%%%%%%%%%%%%%%%%%%%%%%%%%%%%%%%%%%%%%%%%%%%%%%%%%%%%%%%%%%%%%%%%%%%%%%%%%%%%%%%%%%%%%%%%%%%%%%%%%%%%%%%%%%%%%%%%%%%%%%%

%%%%%%%%%%%%%%%%%%%%%%%%%%%%%%%%%%%%%%%%%%%%%%%%%%%%%%%%%%%%%%%%%%%%%%%%%%%%%%%%%%%%%%%%%%%%%%%%%%%%%%%%%%%%%%%%%%%%%%%%%%%%%%
%%%%%%%%%%%%%%%%%%%%%%%%%%%%%%%%%%%%%%%%%%%%%%%%%%%%%%%%%%%%%%%%%%%%%%%%%%%%%%%%%%%%%%%%%%%%%%%%%%%%%%%%%%%%%%%%%%%%%%%%%%%%%%

\section{Refined localization spaces}

In this section we shall collect various definitions and properties for refined localization spaces used throughout this paper.  To a large extend this is based on the monographs \cite{Tr06, Tr08} where this scale of spaces was first introduced and investigated.

\subsection{Definition and basic properties}

The refined localization spaces were originally introduced in \cite{Tr06} when studying Triebel-Lizorkin spaces on domains with irregular boundary, with special focus on Lipschitz domains and domains with fractal boundary (like the Snowflake domain). The basic idea is to decompose a function or distribution using a resolution of unity subordinate to a Whitney decomposition of the domain.

Roughly spoken, a Whitney decomposition is a partition of a given domain into dyadic cubes in such a way that for any cube (with sufficiently small side length) its distance to the boundary of the domain is comparable to its side length. A description for a construction of such decompositions can be found in \cite[Chapter 6]{St70}.

\begin{defi}
	Let $Q_{j,k}=2^{-j}((0,1)^d+k)$ denote the open cube with vertex in $2^{-j}k$ and side length $2^{-j}$, $j\geq 0$,
	$k\in\Z$. Moreover, we denote by $2Q_{j,k}$ the cube concentric with $Q_{j,k}$ with side length $2^{-j+1}$. A
	collection of pairwise disjoint cubes $\{Q_{j,k_l}\}_{j\geq 0, l=1,\ldots,N_j}$ such that
	\begin{equation}\label{whitney-decomp}
		D=\bigcup_{j\geq 0}\bigcup_{l=1}^{N_j}\overline Q_{j,k_l}\,,\qquad
		\text{dist}(2Q_{j,k_l},\partial D)\sim 2^{-j}\,,\quad j\in\N\,,
	\end{equation}
	complemented by $\text{dist}(2Q_{0,k_l},\partial D)\geq c>0$ is called a Whitney decomposition of the domain $D$.
\end{defi}

Note that in what follows we can also replace such a Whitney decomposition by a cover with analogous properties, particularly when replacing cubes with vertices in $2^{-j}k$ by cubes centered in these points. Condition
\eqref{whitney-decomp} then ensures a uniformly bounded overlap of the cubes, i.e. the number of overlapping cubes at any given point $x\in D$ is uniformly bounded.

\begin{defi}\label{def-rloc}
	Let $\{\varphi_{j,l}\}$ be a resolution of unity of non-negative $C^\infty$-functions w.r.t. a  Whitney decomposition
	$\{Q_{j,k_l}\}$, i.e.
	\[
		\sum_{j,l}\varphi_{j,l}(x)=1\text{ for all }x\in D\,,\qquad
		|\partial^\alpha\varphi_{j,l}(x)|\leq c_\alpha 2^{j|\alpha|}\,,\quad\alpha\in\N_0^d\,.
	\]
	Moreover, we require $\supp\varphi_{j,l}\subset 2Q_{j,k_l}$. Then we define the refined localization spaces
	$F^{s,\mathrm{rloc}}_{p,q}(D)$ to be the collection of all locally integrable functions $f$ such that
	\[
		\|f|F^{s,\mathrm{rloc}}_{p,q}(D)\|
			=\biggl(\sum_{j=0}^\infty\sum_{l=1}^{N_j}\|\varphi_{j,l}f|F^s_{p,q}(\R^d)\|^p\biggr)^{1/p}<\infty\,,
	\]
	where $0<p<\infty$, $0<q\leq\infty$,  and $s>\sigma_{p,q}:=d\big(\frac{1}{\min(1,p,q)}-1\big)$.
\end{defi}

\begin{rem}\label{remark-rloc}
	A resolution of unity with the required properties can always be found: Start with a bump function
	$\phi\in C^\infty(\R^d)$ for $Q=(0,1)^d$, i.e. $\phi(x)=1$ on $Q$ and $\supp\phi\subset 2Q$. Via dilation and
	translation this yields bump functions $\phi_{j,k}$ for $Q_{j,k}$, with $\psi(x)=\sum_{j,k}\phi_{j,k}(x)>0$ for all 
	$x\in D$. The functions $\varphi_{j,k}=\phi_{j,k}/\psi$ have the required properties.
\end{rem}	

The original motivation to study refined localization spaces arose while investigating Triebel-Lizorkin spaces on domains with minimal regularity, particularly spaces
\[
	\widetilde F^s_{p,q}(D):=\{f\in F^s_{p,q}(\R^d):\supp f\subset\overline D\}\,.
\]
If $D$ is a bounded Lipschitz domain, then Triebel showed in \cite[Proposition 4.20]{Tr06} and \cite[Proposition 3.10]{Tr08} that for all parameters $0<p<\infty$, $0<q\leq\infty$ and $s>\sigma_{p,q}=d\big(\frac{1}{\min(1,p,q)}-1\big)$ these spaces actually coincide with the refined localization spaces. Such a simple characterization unfortunately fails for other types of domains, particularly for $D=\R^d\setminus\R^\ell_*$ in which we are interested here. On the other hand, in \cite[Theorem 3.19]{Scharf} explicit descriptions have been found for parameters $1\leq p,q<\infty$. For example, for smoothness $0<s<(d-\ell)/p$ we   have $F^{s,\mathrm{rloc}}_{p,q}(\R^d\setminus\R^\ell_*)=F^s_{p,q}(\R^d)$. Note however, that in this article we will primarily be interested in the case of smoothness $s>\sigma_{p,q}=d\big(\frac{1}{\min(1,p,q)}-1\big)$.

Another characterization for refined localization spaces gives a first hint for a connection to weighted spaces:
\begin{prop}
	Let $D\subset\R^d$ be an arbitrary domain,   $0<p<\infty$, $0<q\leq\infty$,  and $s>\sigma_{p,q}$. Then it holds
	\begin{equation}\label{eq-refined-weight}
		\|u|F^{s,\mathrm{rloc}}_{p,q}(D)\|\sim\|u|F^s_{p,q}(D)\|+\|\delta(\cdot)^{-s}u|L_p(D)\|\,,
	\end{equation}
	where $\delta(x)=\min\left(1,\dist(x,\partial D)\right)$. In particular, we always have
	\[
		\|u|F^s_{p,q}(D)\|\lesssim\|u|F^{s,\mathrm{rloc}}_{p,q}(D)\|\,,\quad\text{i.e.}\quad
		F^{s,\mathrm{rloc}}_{p,q}(D)\hookrightarrow F^s_{p,q}(D)\,.
	\]
\end{prop}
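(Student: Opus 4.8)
The plan is to establish the two-sided estimate \eqref{eq-refined-weight} by proving each inequality separately; the embedding $F^{s,\mathrm{rloc}}_{p,q}(D)\hookrightarrow F^s_{p,q}(D)$ then follows immediately from the ``$\gtrsim$''-part. Throughout I would use two elementary geometric facts about the Whitney decomposition: first, that the dilated cubes $2Q_{j,k_l}$ have uniformly bounded overlap, so that $\sum_{j,l}\|v|L_p(2Q_{j,k_l})\|^p\lesssim\|v|L_p(D)\|^p$ for every $v$; and second, that by \eqref{whitney-decomp} one has $\delta(x)\sim 2^{-j}$ for every $x\in 2Q_{j,k_l}$ (with the layer $j=0$ accounting for the interior). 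I would write $u=\sum_{j,l}\varphi_{j,l}u$ on $D$ and analyse the single pieces $\varphi_{j,l}u$, each of which is supported in $2Q_{j,k_l}$, a cube of side length $\sim 2^{-j}$.

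For the bound of the right-hand side by the refined localization norm I would argue in two steps. The weighted $L_p$-term is elementary: since $\delta\sim 2^{-j}$ on $2Q_{j,k_l}$, a scaling (Poincar\'e-type) inequality for functions supported in a cube of side $2^{-j}$ gives $2^{js}\|g|L_p(\R^d)\|\lesssim\|g|F^s_{p,q}(\R^d)\|$ for every $g$ supported in $2Q_{j,k_l}$, valid precisely because $s>\sigma_{p,q}$. Splitting $D$ into the disjoint cubes $Q_{j,k_l}$, writing $u$ on $2Q_{j,k_l}$ as a finite sum of the (boundedly many) neighbouring pieces $\varphi_{j',l'}u$ with $2^{j'}\sim 2^{j}$, and summing with the bounded-overlap property then yields $\|\delta^{-s}u|L_p(D)\|\lesssim\|u|F^{s,\mathrm{rloc}}_{p,q}(D)\|$. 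For the smoothness term I would take $g:=\sum_{j,l}\varphi_{j,l}u$ as an explicit extension of $u$ to $\R^d$, so that $\|u|F^s_{p,q}(D)\|\le\|g|F^s_{p,q}(\R^d)\|$, and then invoke a gluing estimate $\|\sum_{j,l}\varphi_{j,l}u|F^s_{p,q}(\R^d)\|\lesssim\big(\sum_{j,l}\|\varphi_{j,l}u|F^s_{p,q}(\R^d)\|^p\big)^{1/p}$ for families with bounded-overlap supports subordinate to a Whitney covering.

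For the converse inequality I would estimate each piece by a pointwise-multiplier (Leibniz) argument at scale $2^{-j}$. Since $|\partial^\alpha\varphi_{j,l}|\lesssim 2^{j|\alpha|}$, the multiplier theory for $F^s_{p,q}$ yields a local bound of the shape $\|\varphi_{j,l}u|F^s_{p,q}(\R^d)\|\lesssim\|u|F^s_{p,q}(2Q_{j,k_l})\|+2^{js}\|u|L_p(2Q_{j,k_l})\|$, where the second summand collects the contributions in which derivatives fall on $\varphi_{j,l}$ (intermediate orders being absorbed by a local interpolation/Poincar\'e inequality on the cube). Using $2^{js}\sim\delta^{-s}$ on $2Q_{j,k_l}$ and summing the $p$-th powers over all $(j,l)$, the bounded overlap of the cubes turns the local smoothness terms into $\|u|F^s_{p,q}(D)\|^p$ and the weighted terms into $\|\delta^{-s}u|L_p(D)\|^p$, which is the claim.

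The technical heart --- and the step I expect to be the main obstacle --- is the gluing estimate used for the smoothness term, i.e.\ that the local pieces $\varphi_{j,l}u$ spread over infinitely many dyadic layers approaching $\partial D$ can be summed back into a single element of $F^s_{p,q}(\R^d)$. Unlike the purely geometric bounded-overlap summations, this requires controlling the interaction of components living at different scales $2^{-j}$, and it is exactly here that the restriction $s>\sigma_{p,q}$ is indispensable; this multi-scale localization principle is the substantial input from Triebel's refined-localization theory. The corresponding multiplier estimate in the converse direction is more routine but must be stated in the fractional (non-integer $s$) form, and for $q=\infty$ all inner $\ell_q$-sums are replaced by suprema throughout.
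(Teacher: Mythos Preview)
The paper does not supply its own proof of this proposition; it simply refers to \cite[Proposition~3.24]{Scharf} for $q<\infty$ and to \cite[Rem.~2.3(iv)]{HansenScharf} for $q=\infty$. Your outline is the natural strategy and broadly matches how such results are proved in Triebel's and Scharf's work: the ``$\gtrsim$''-direction is essentially as you describe, with the weighted $L_p$-term coming from the scaling inequality for functions supported in small cubes, and the $F^s_{p,q}(D)$-term from the multi-scale gluing estimate (this is the core of Triebel's refined-localization machinery, and the restriction $s>\sigma_{p,q}$ is indeed needed there).

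There is, however, a genuine gap in your converse argument. You propose the local multiplier bound
\[
	\|\varphi_{j,l}u|F^s_{p,q}(\R^d)\|\lesssim\|u|F^s_{p,q}(2Q_{j,k_l})\|+2^{js}\|u|L_p(2Q_{j,k_l})\|
\]
and then claim that ``the bounded overlap of the cubes turns the local smoothness terms into $\|u|F^s_{p,q}(D)\|^p$''. This does not work as stated: the quantities $\|u|F^s_{p,q}(2Q_{j,k_l})\|$ are restriction norms (infima over extensions), not integrals over the cubes, and they cannot be summed via bounded overlap --- each of them is controlled only by the global quantity $\|u|F^s_{p,q}(D)\|$, which gives a divergent sum over the infinitely many Whitney cubes. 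The fix is to state the multiplier estimate in a form where the smoothness contribution is itself an $L_p$-integral over a dilated cube: fix one near-optimal extension $\tilde u\in F^s_{p,q}(\R^d)$ of $u$, use a square-function (local means) or atomic/wavelet characterisation of $F^s_{p,q}(\R^d)$, and show
\[
	\|\varphi_{j,l}\tilde u|F^s_{p,q}(\R^d)\|^p
		\lesssim\int_{cQ_{j,k_l}}|g_{\tilde u}(x)|^p\,dx+2^{jsp}\|\tilde u|L_p(2Q_{j,k_l})\|^p,
\]
with $g_{\tilde u}$ the Littlewood--Paley square function of $\tilde u$. Then the bounded-overlap argument applies legitimately and yields the bound by $\|\tilde u|F^s_{p,q}(\R^d)\|^p+\|\delta^{-s}u|L_p(D)\|^p$. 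This is precisely the sort of localised estimate that Scharf's argument rests on.
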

Regarding the proof for the case $q<\infty$ we refer to \cite[Proposition 3.24]{Scharf}.  A closer inspection of the proof in [32] reveals that the result remains true also for $q = \infty$. We refer to \cite[Rem.~2.3(iv)]{HansenScharf} for details.   Moreover, we note that in the above result we
can replace the distance function $\delta$   by the regularized distance $\rho$.

%%%%%%%%%%%%%%%%%%%%%%%%%%%%%%%%%%%%%%%%%%%%%%%%%%%%%%%%%%%%%%%%%%%%%%%%%%%%%%%%%%%%%%%%%%%%%%%%%%%%%%%%%%%%%%%%%%%%%%%%%%%%%%
%%%%%%%%%%%%%%%%%%%%%%%%%%%%%%%%%%%%%%%%%%%%%%%%%%%%%%%%%%%%%%%%%%%%%%%%%%%%%%%%%%%%%%%%%%%%%%%%%%%%%%%%%%%%%%%%%%%%%%%%%%%%%%

%%%%%%%%%%%%%%%%%%%%%%%%%%%%%%%%%%%%%%%%%%%%%%%%%%%%%%%%%%%%%%%%%%%%%%%%%%%%%%%%%%%%%%%%%%%%%%%%%%%%%%%%%%%%%%%%%%%%%%%%%%%%%%
%%%%%%%%%%%%%%%%%%%%%%%%%%%%%%%%%%%%%%%%%%%%%%%%%%%%%%%%%%%%%%%%%%%%%%%%%%%%%%%%%%%%%%%%%%%%%%%%%%%%%%%%%%%%%%%%%%%%%%%%%%%%%%

%%%%%%%%%%%%%%%%%%%%%%%%%%%%%%%%%%%%%%%%%%%%%%%%%%%%%%%%%%%%%%%%%%%%%%%%%%%%%%%%%%%%%%%%%%%%%%%%%%%%%%%%%%%%%%%%%%%%%%%%%%%%%%
%%%%%%%%%%%%%%%%%%%%%%%%%%%%%%%%%%%%%%%%%%%%%%%%%%%%%%%%%%%%%%%%%%%%%%%%%%%%%%%%%%%%%%%%%%%%%%%%%%%%%%%%%%%%%%%%%%%%%%%%%%%%%%

\subsection{Wavelet characterization}

As a further characterization we shall state here the description of the refined localization spaces in terms of appropriate wavelet systems.  %As we also intend to exploit here, 
For many scales of function spaces wavelet characterizations allow to transfer problems from the function spaces to related problems for associated sequence spaces by utilizing corresponding isomorphisms, even though the wavelet systems not necessarily constitute bases. We intend to exploit this in the sequel. 

The definitions of $u$-wavelet systems,  the sequence spaces $f^s_{p,q}(\Z_\Omega)$  as well as other related  definitions and references can be found in the appendix.

\begin{satz}[{\cite[Theorem 2.38]{Tr08}}]\label{thm-wavelet}
	Let $\Omega\subsetneq\R^d$ be an arbitrary domain. Let
	$$0<p<\infty, \quad 0<q\leq\infty, \quad  s>\sigma_{p,q},$$
	and let $\{\Phi_{j,r}:j\in\N_0,r=1,\ldots,N_j\}$ be an orthonormal $u$-wavelet basis in $L_2(\Omega)$ with $u>s$.
	Finally, let
	$$\max(1,p)<v\leq\infty \qquad \text{and}\qquad s-\frac{d}{p}>-\frac{d}{v}.$$
	Then a function $f\in L_v(\Omega)$ belongs to the refined localization space $F^{s,\mathrm{rloc}}_{p,q}(\Omega)$ if, and
	only if, it can be represented as
	$$f=\sum_{j=0}^\infty\sum_{r=1}^{N_j}\lambda_{j,r}2^{-jd/2}\Phi_{j,r},\qquad\lambda\in f^s_{p,q}(\Z_\Omega).$$
	Furthermore, if $f\in F^{s,\mathrm{rloc}}_{p,q}(\Omega)$, then this representation is unique with $\lambda=\lambda(f)$, 
	$$\lambda_{j,r}(f)=2^{jn/2}\bigl(f,\Phi_{j,r}\bigr)=2^{jn/2}\int_\Omega f(x)\Phi_{j,r}(x)\,dx.$$
	Moreover, the mapping
	$$I:f\mapsto\lambda(f)=\bigl\{2^{jd/2}(f,\Phi_{j,r})\bigr\}$$
	is an isomorphism of $F^{s,\mathrm{rloc}}_{p,q}(\Omega)$ onto $f^s_{p,q}(\Z_\Omega)$; in particular, we have a
	quasi-norm equivalence
	$$\|f|F^{s,\mathrm{rloc}}_{p,q}(\Omega)\|\sim\|\lambda(f)|f^s_{p,q}(\Z_\Omega)\|.$$
\end{satz}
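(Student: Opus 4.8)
The plan is to reduce everything to the classical wavelet (equivalently, atomic) characterization of the spaces $F^s_{p,q}(\R^d)$ on the full Euclidean space, exploiting the fact that both the refined localization norm and the $u$-wavelet basis are organized according to the same Whitney geometry of $\Omega$. Recall that by construction each $\Phi_{j,r}$ is a $C^u$-function supported in a ball of radius $\sim 2^{-j}$ sitting at distance $\sim 2^{-j}$ from $\partial\Omega$, and --- since $u>s$ --- it carries enough smoothness and, through the wavelet moment conditions, enough vanishing moments to qualify as an atom for $F^s_{p,q}(\R^d)$ at the given parameters $s,p,q$.

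First I would prove the synthesis estimate. Given $\lambda\in f^s_{p,q}(\Z_\Omega)$, the normalized functions $2^{-jd/2}\Phi_{j,r}$ are admissible atoms, so the atomic decomposition theorem for $F^s_{p,q}(\R^d)$ yields convergence of $f=\sum_{j,r}\lambda_{j,r}2^{-jd/2}\Phi_{j,r}$; the hypothesis $s-d/p>-d/v$ guarantees that the series in fact converges in $L_v(\Omega)$, so $f$ is a genuine function. To pass to the refined localization norm I would apply the cut-offs $\varphi_{j',l}$ from Definition \ref{def-rloc} and estimate each piece $\|\varphi_{j',l}f|F^s_{p,q}(\R^d)\|$. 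The point is that $\varphi_{j',l}$ meets only those wavelets whose scale $j$ and location are comparable to $(j',l)$ in the Whitney sense, so the local pieces inherit the atomic structure with matching index ranges; the $p$-summation over $(j',l)$ then reproduces exactly $\|\lambda|f^s_{p,q}(\Z_\Omega)\|^p$.

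Second I would prove the analysis estimate. For $f\in F^{s,\mathrm{rloc}}_{p,q}(\Omega)$ the coefficients $\lambda_{j,r}(f)=2^{jd/2}(f,\Phi_{j,r})$ are local means of $f$ against smooth, well-localized kernels; applying the local means characterization of $F^s_{p,q}$ to the pieces $\varphi_{j',l}f$ and collecting the resulting bounds according to the Whitney index bookkeeping gives $\|\lambda(f)|f^s_{p,q}(\Z_\Omega)\|\lesssim\|f|F^{s,\mathrm{rloc}}_{p,q}(\Omega)\|$. The conditions $\max(1,p)<v\le\infty$ and $s-d/p>-d/v$ are what make the pairing $(f,\Phi_{j,r})$ well-defined, since then $f$ lies locally in $L_v$ and the wavelets lie in the dual.

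Finally, because $\{\Phi_{j,r}\}$ is an orthonormal basis of $L_2(\Omega)$, for $f\in L_v(\Omega)\cap F^{s,\mathrm{rloc}}_{p,q}(\Omega)$ the reconstruction identity $f=\sum_{j,r}(f,\Phi_{j,r})\Phi_{j,r}$ holds, which simultaneously yields uniqueness of the representation and the fact that analysis and synthesis are mutually inverse; hence $I$ is the claimed isomorphism onto $f^s_{p,q}(\Z_\Omega)$, and the two-sided estimates from the first two steps give the norm equivalence. The main obstacle I expect is precisely the geometric matching in the two middle steps: making rigorous that the cut-offs $\varphi_{j',l}$ interact only with finitely many wavelets at comparable scales, and controlling the boundary region where the Whitney scales refine without bound. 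It is exactly the compatibility of the distance-to-boundary weight with both the refined localization decomposition and the wavelet supports that keeps these cross-interactions under control, and organizing this bookkeeping cleanly --- rather than any single hard inequality --- is the crux of the argument.
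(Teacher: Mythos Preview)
The paper does not prove this statement at all: it is quoted verbatim from \cite[Theorem 2.38]{Tr08}, and the surrounding text explicitly says ``For more details, a proof of the wavelet characterization stated below and further references, we refer to \cite[Chapter 2]{Tr08}.'' So there is no proof in the paper to compare your proposal against.

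That said, your outline is broadly in the spirit of Triebel's original argument --- synthesis via the atomic decomposition of $F^s_{p,q}(\R^d)$, analysis via local means, and uniqueness from orthonormality --- and you have correctly identified the crux, namely the Whitney-type bookkeeping that matches the supports of the cut-offs $\varphi_{j',l}$ to the supports of the $\Phi_{j,r}$. One point to be careful about: the boundary wavelets in a $u$-wavelet system are finite linear combinations of tensor wavelets $\Psi^L_{\underline e,j,m'}$ and need not individually satisfy vanishing moment conditions in the usual sense, so treating them directly as classical atoms is not quite right; in Triebel's treatment this is handled by working with the specific structure of the $u$-wavelet construction rather than invoking a generic atomic theorem. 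Also, the reconstruction identity $f=\sum_{j,r}(f,\Phi_{j,r})\Phi_{j,r}$ is immediate only for $f\in L_2(\Omega)$, whereas here $f\in L_v(\Omega)$ with $v$ possibly different from $2$; extending it requires an additional density or duality argument, which you have not addressed.
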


\begin{rem}
	Let us emphasize the fact that though not included in the above result, it remains valid also in
	case $s=0$ and $q=2$ upon defining
	$$F^{0,\mathrm{rloc}}_{p,2}(D)=L_p(D),\quad 1<p<\infty.$$
	This observation will allow us in the sequel to include interpolation results for the Banach couple
	$\{L_p(D),F^{s,\mathrm{rloc}}_{p,2}(D)\}$, $s>0$.
\end{rem}

%The definition of $u$-wavelet systems, the sequence spaces $f^s_{p,q}(\Z_\Omega)$, as well as related other definitions and references can be found in the appendix.

%%%%%%%%%%%%%%%%%%%%%%%%%%%%%%%%%%%%%%%%%%%%%%%%%%%%%%%%%%%%%%%%%%%%%%%%%%%%%%%%%%%%%%%%%%%%%%%%%%%%%%%%%%%%%%%%%%%%%%%%%%%%%%
%%%%%%%%%%%%%%%%%%%%%%%%%%%%%%%%%%%%%%%%%%%%%%%%%%%%%%%%%%%%%%%%%%%%%%%%%%%%%%%%%%%%%%%%%%%%%%%%%%%%%%%%%%%%%%%%%%%%%%%%%%%%%%

%%%%%%%%%%%%%%%%%%%%%%%%%%%%%%%%%%%%%%%%%%%%%%%%%%%%%%%%%%%%%%%%%%%%%%%%%%%%%%%%%%%%%%%%%%%%%%%%%%%%%%%%%%%%%%%%%%%%%%%%%%%%%%
%%%%%%%%%%%%%%%%%%%%%%%%%%%%%%%%%%%%%%%%%%%%%%%%%%%%%%%%%%%%%%%%%%%%%%%%%%%%%%%%%%%%%%%%%%%%%%%%%%%%%%%%%%%%%%%%%%%%%%%%%%%%%%

%%%%%%%%%%%%%%%%%%%%%%%%%%%%%%%%%%%%%%%%%%%%%%%%%%%%%%%%%%%%%%%%%%%%%%%%%%%%%%%%%%%%%%%%%%%%%%%%%%%%%%%%%%%%%%%%%%%%%%%%%%%%%%
%%%%%%%%%%%%%%%%%%%%%%%%%%%%%%%%%%%%%%%%%%%%%%%%%%%%%%%%%%%%%%%%%%%%%%%%%%%%%%%%%%%%%%%%%%%%%%%%%%%%%%%%%%%%%%%%%%%%%%%%%%%%%%

\section{Complex interpolation of Kondratiev spaces}

In this section we introduce Kondratiev spaces of fractional smoothness order, based on their close relation to refined localization spaces as described below.  Afterwards we compare this approach to previous approaches   based on complex interpolation,  which also yield  an extension of the scale of Kondratiev spaces to fractional smoothness order. 

\subsection{A relation between Kondratiev and refined localization spaces}

The main tool in our investigation of complex interpolation for Kondratiev spaces will be the refined localization spaces from the previous section, as they are known to have a close relation with Kondratiev spaces. In particular, we have the following result from \cite{HansenScharf}:

\begin{satz}\label{thm-kond-rloc}
	Let $D\subset\R^d$ be a domain of polyhedral type with singular set $S\subset\partial D$, and consider
	$\Omega=\R^d\setminus S$. Let $1<p<\infty$ and $m\in\N$. Then we have
	\begin{equation}\label{eq-kond-rloc-1}
		\calk^m_{m,p}(\Omega)=F^{m,\mathrm{rloc}}_{p,2}(\Omega).
	\end{equation}
\end{satz}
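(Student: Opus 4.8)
The plan is to read off the refined localization norm through the weighted description \eqref{eq-refined-weight} and then reduce the statement to a weighted interpolation inequality. Since $D=\Omega=\R^d\setminus S$ has boundary $\partial\Omega=S$, the regularized distance appearing in \eqref{eq-refined-weight} coincides with the Kondratiev weight $\rho$, so for $s=m$ and $q=2$ we obtain
\[
	\|u|F^{m,\mathrm{rloc}}_{p,2}(\Omega)\|\sim\|u|F^m_{p,2}(\Omega)\|+\|\rho^{-m}u|L_p(\Omega)\|.
\]
For $1<p<\infty$ and $m\in\N$ the Littlewood--Paley characterization gives $F^m_{p,2}(\R^d)=W^m_p(\R^d)$, and since $S$ is a null set of codimension at least two (a point in $\R^2$, an edge or a vertex in $\R^3$) this identification descends to $\Omega$, so that $F^m_{p,2}(\Omega)=W^m_p(\Omega)$. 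Consequently
\[
	\|u|F^{m,\mathrm{rloc}}_{p,2}(\Omega)\|^p\sim\sum_{|\alpha|\le m}\|\partial^\alpha u|L_p(\Omega)\|^p+\|\rho^{-m}u|L_p(\Omega)\|^p,
\]
and the task becomes to compare this with $\|u|\calk^m_{m,p}(\Omega)\|^p=\sum_{|\alpha|\le m}\|\rho^{|\alpha|-m}\partial^\alpha u|L_p(\Omega)\|^p$.

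One inclusion is immediate. Because $\rho=\min(1,\dist(\cdot,S))\le 1$ and $|\alpha|-m\le 0$, we have $\rho^{|\alpha|-m}\ge 1$, hence $\|\partial^\alpha u|L_p(\Omega)\|\le\|\rho^{|\alpha|-m}\partial^\alpha u|L_p(\Omega)\|$ for every $|\alpha|\le m$; as $\|\rho^{-m}u|L_p(\Omega)\|$ is exactly the $|\alpha|=0$ term of the Kondratiev norm, this gives $\|u|F^{m,\mathrm{rloc}}_{p,2}(\Omega)\|\lesssim\|u|\calk^m_{m,p}(\Omega)\|$, i.e. $\calk^m_{m,p}(\Omega)\hookrightarrow F^{m,\mathrm{rloc}}_{p,2}(\Omega)$.

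For the reverse inclusion I must dominate each intermediate term $\|\rho^{|\alpha|-m}\partial^\alpha u|L_p(\Omega)\|$ with $0<|\alpha|<m$ by the two extremes $\|\rho^{-m}u|L_p(\Omega)\|$ and $\sum_{|\beta|=m}\|\partial^\beta u|L_p(\Omega)\|$. First I would localize via Corollary \ref{cor-localization} together with the $p$-summation \eqref{eq-localization} of Proposition \ref{prop-localization}, reducing matters to a single elementary configuration (a smooth or nonsmooth cone, a dihedral domain, or a polyhedral cone), on which $\rho$ is comparable to the distance to a point or to an edge. There the required bound is the weighted interpolation (Hardy-type) inequality
\[
	\|\rho^{k-m}\nabla^k u|L_p\|\lesssim\|\rho^{-m}u|L_p\|+\|\nabla^m u|L_p\|,\qquad 0<k<m,
\]
obtained by iterating a one-dimensional Hardy inequality in the variable measuring the distance to the singular set and then integrating over the complementary tangential directions.

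The main obstacle is precisely this interpolation step: the underlying weighted one-dimensional Hardy inequalities are available only for non-critical combinations of $p$ and the codimension of $S$, so one must verify that the value $a=m$ at hand lies in the admissible range and that no boundary contribution obstructs the estimate. A secondary technical point is to confirm that the localization is simultaneously compatible with the Whitney-based assembly defining $F^{m,\mathrm{rloc}}_{p,2}(\Omega)$ and with the dyadic-layer decomposition \eqref{eq-localization}. Once the intermediate terms are absorbed into the two extremes, combining with the easy inclusion yields the two-sided norm equivalence and hence the identity \eqref{eq-kond-rloc-1}.
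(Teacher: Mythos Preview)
The paper does not prove this identity; it is quoted from \cite{HansenScharf}. Your overall strategy via the weighted characterization \eqref{eq-refined-weight} is natural, and the plan for the hard direction through Hardy-type inequalities on the model configurations is reasonable and in the spirit of what one expects that reference to do.

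There is, however, a genuine gap in the step you label as ``immediate''. You claim that $F^m_{p,2}(\R^d)=W^m_p(\R^d)$ ``descends to $\Omega$'' because $S$ has codimension at least two. The space $F^m_{p,2}(\Omega)$ appearing in \eqref{eq-refined-weight} is the restriction space, hence (since $|S|=0$) coincides with $W^m_p(\R^d)$ viewed on $\Omega$. The intrinsic space $W^m_p(\Omega)$ you use in your estimate is in general strictly larger: codimension $\ge 2$ is \emph{not} sufficient for removability of $S$ in $W^m_p$ across the full range $1<p<\infty$, $m\in\N$. Already for an edge in $\R^3$ removability for $W^1_p$ fails once $p\ge 2$, and the situation deteriorates for larger $m$. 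Thus the bound $\|\partial^\alpha u|L_p(\Omega)\|\le\|\rho^{|\alpha|-m}\partial^\alpha u|L_p(\Omega)\|$ controls only the intrinsic norm and does not yet place $u$ in $F^m_{p,2}(\Omega)$.

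The fix is short: invoke the density of $C^\infty_*(\Omega,S)$ in $\calk^m_{m,p}(\Omega)$ (recorded right after Definition~\ref{def-kondratiev}). For such $u$ the weak derivatives on $\Omega$ and on $\R^d$ agree, your pointwise bound gives $\|u|W^m_p(\R^d)\|\le\|u|\calk^m_{m,p}(\Omega)\|$, and the embedding follows by closure. Alternatively---and this also bypasses the Hardy verification you flag as the main obstacle---one can skip \eqref{eq-refined-weight} and compare Definition~\ref{def-rloc} directly with Proposition~\ref{prop-localization}: on each Whitney cube $\rho\sim 2^{-j}$, the localized piece $\varphi_{j,l}u$ is compactly supported in a cube of side $\sim 2^{-j}$, and the intermediate weighted derivatives are controlled by the scaled Poincar\'e inequality on that cube rather than by a global Hardy inequality.
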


This theorem allows to extend the previously used definition of Kondratiev spaces in two directions: For one, towards domains of the form $\Omega=\R^d\setminus\Gamma$ for arbitrary closed sets $\Gamma$ with $d$-dimensional Lebesgue-measure $|\Gamma|=0$ (which we won't do here), and two, towards spaces with fractional smoothness. The latter possibility will be discussed below, as far as it pertains to our current investigation.

Moreover, in view of Propositon \ref{prop-shift} we obtain the following: 

\begin{cor}
	For arbitrary $a\in\R$ we have
	\begin{equation}\label{eq-kond-rloc-2}
		\calk^m_{a,p}(\Omega)=T_{a-m}F^{m,\mathrm{rloc}}_{p,2}(\Omega).
	\end{equation}
\end{cor}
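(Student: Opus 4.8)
The plan is to combine the identity from Theorem \ref{thm-kond-rloc} with the shift isomorphism of Proposition \ref{prop-shift}; no genuinely new work is required, so this is a direct consequence rather than a result demanding a fresh argument. First I would recall that Theorem \ref{thm-kond-rloc} gives, for $\Omega=\R^d\setminus S$ with $1<p<\infty$ and $m\in\N$, the identity
\[
	\calk^m_{m,p}(\Omega)=F^{m,\mathrm{rloc}}_{p,2}(\Omega),
\]
with equivalence of the associated (quasi-)norms. This identifies the refined localization space with the Kondratiev space whose weight exponent $a$ equals the smoothness $m$.

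Next I would invoke Proposition \ref{prop-shift} with the specific choice $b=a-m$. That proposition asserts that $T_b:u\mapsto\rho^b u$ is an isomorphism from $\calk^m_{a',p}(\Omega,M)$ onto $\calk^m_{a'+b,p}(\Omega,M)$ for every $a'\in\R$. Taking $a'=m$ and $b=a-m$, the operator $T_{a-m}$ maps $\calk^m_{m,p}(\Omega)$ isomorphically onto $\calk^m_{m+(a-m),p}(\Omega)=\calk^m_{a,p}(\Omega)$. Applying $T_{a-m}$ to both sides of the identity from Theorem \ref{thm-kond-rloc} then yields
\[
	T_{a-m}F^{m,\mathrm{rloc}}_{p,2}(\Omega)
		=T_{a-m}\,\calk^m_{m,p}(\Omega)
		=\calk^m_{a,p}(\Omega),
\]
which is precisely \eqref{eq-kond-rloc-2}. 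Since $T_{a-m}$ is an isomorphism and the norm on the left-hand side is (by definition of the image space under an isomorphism) the transported norm, the equality holds with equivalence of norms.

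The only point that warrants a remark rather than a genuine obstacle is the compatibility of the weight functions: the operator $T_b$ in Proposition \ref{prop-shift} is built from the regularized distance $\rho$, and the refined localization space in Theorem \ref{thm-kond-rloc} is characterized (via \eqref{eq-refined-weight}) using the distance $\delta$, which may likewise be replaced by $\rho$ up to norm equivalence, as noted after that characterization. Thus both constructions refer to the same weight up to equivalence, and the chain of identities above is consistent. I expect this weight-matching to be the only place where one must pause, and it is already settled by the preceding remarks.
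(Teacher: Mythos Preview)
Your argument is correct and matches the paper's own reasoning: the corollary is stated immediately after the phrase ``in view of Proposition \ref{prop-shift}'' with no separate proof, i.e., it is obtained exactly by applying the shift isomorphism $T_{a-m}$ to the identity $\calk^m_{m,p}(\Omega)=F^{m,\mathrm{rloc}}_{p,2}(\Omega)$ from Theorem \ref{thm-kond-rloc}. Your additional remark on the compatibility of the weight functions is a harmless clarification but not needed for the argument.
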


Hence instead of directly studying the behaviour of Kondratiev spaces under complex interpolation, we shall investigate the corresponding problem for refined localization spaces and subsequently transfer the obtained results back to Kondratiev spaces.

%%%%%%%%%%%%%%%%%%%%%%%%%%%%%%%%%%%%%%%%%%%%%%%%%%%%%%%%%%%%%%%%%%%%%%%%%%%%%%%%%%%%%%%%%%%%%%%%%%%%%%%%%%%%%%%%%%%%%%%%%%%%%%
%%%%%%%%%%%%%%%%%%%%%%%%%%%%%%%%%%%%%%%%%%%%%%%%%%%%%%%%%%%%%%%%%%%%%%%%%%%%%%%%%%%%%%%%%%%%%%%%%%%%%%%%%%%%%%%%%%%%%%%%%%%%%%

\subsection{Complex interpolation of refined localization spaces}

The key to study interpolation results for refined localization spaces are the wavelet characterizations from Theorem \ref{thm-wavelet}, specifically the wavelet isomorphism $I$, which allows to transfer the problem from the  function spaces $F^{s,\mathrm{rloc}}_{p,q}(\Omega)$ to a corresponding problem for the much easier-to-handle sequence spaces $f^s_{p,q}(\Z_\Omega)$, where as before $\Z_\Omega$ is some approximate lattice in $\Omega$.

The interpolation of sequence spaces is a well-established problem in interpolation theory, we refer to  e.g.  
\cite[Section 1.18]{Tr78}. In our context, interpolation results for sequence spaces $f^s_{p,q}(\Z_\Omega)$ can  easily be obtained via the method of retractions and coretractions from corresponding results for the Bochner spaces
$L_p(\Omega;\ell^s_q(\nabla_\Omega))$, where $\nabla_\Omega=\{(j,r):j\in\N,r=1,\ldots,N_j\}$, 
$$\|\lambda|\ell_q^s(\nabla_\Omega)\|^q=\sum_{j=1}^\infty\sum_{r=1}^{N_j}2^{jsq}|\lambda_{j,r}|^q\,,
	\qquad\lambda=\{\lambda_{j,r}\}_{(j,r)\in\nabla_\Omega}\,,$$
and for a general (quasi-)Banach space $A$ the Bochner space $L_p(\Omega)=L_p(\Omega;A)$ is defined via the norm
$$\|f|L_p(\Omega;A)\|=\int_\Omega\|f(\omega)|A\|^p\,d\omega\,,\qquad f\in L_p(\Omega;A)\,.$$
In particular, the space $L_p(\Omega;\ell^s_q(\nabla_\Omega))$ can be interpreted to contain sequences of functions
$f=\{f_{j,r}\}_{(j,r)\in\nabla_\Omega}$.

In turn, interpolation results for Bochner spaces $L_p(A)$ are well-known in the literature, we refer to
\cite[Section 1.18.4]{Tr78} and the references given therein. For the complex method of interpolation, it holds
$$\bigl[L_{p_0}(A_0),L_{p_1}(A_1)\bigr]_\Theta=L_p\bigl([A_0,A_1]_\Theta\bigr),\qquad
	\frac{1}{p}=\frac{1-\Theta}{p_0}+\frac{\Theta}{p_1},$$
for arbitrary $0<\Theta<1$, $1<p_0,p_1<\infty$ and a Banach space couple $\{A_0,A_1\}$. An analogous statement holds for $\ell_q$-spaces of vector-valued sequences, where we even may allow each component of a sequence to take values in a different Banach space; i.e.,  let $\mathcal{A}=(A_j)_{j\in\N}$ be a family of Banach spaces, we consider sequences $\lambda=(\lambda_j)_{j\in\N}$ with $\lambda_j\in A_j$. Then we get the interpolation result
$$\bigl[\ell_{q_0}(\mathcal{A}),\ell_{q_1}(\mathcal{B})\bigr]_\Theta=\ell_q([\mathcal{A},\mathcal{B}]_\Theta),
	\qquad\frac{1}{q}=\frac{1-\Theta}{q_0}+\frac{\Theta}{q_1},$$
where $\mathcal{B}=(B_j)_{j\in\N}$ is another family of Banach spaces, and $[\mathcal{A},\mathcal{B}]_\Theta$ denotes the corresponding family of interpolation spaces $([A_j,B_j]_\Theta)_{j\in\N}$. In case of spaces $\ell_q^s(\nabla_\Omega)$ we need to apply this interpolation result with $A_j=2^{js_0}\ell_{q_0}^{N_j}$ and $B_j=2^{js_1}\ell_{q_1}^{N_j}$ (i.e. $N_j$-dimensional $\ell_q$-spaces, whose norm is modified by a constant factor).

We now collect the relevant interpolation results:
\begin{prop}\label{prop-interpol-rloc}
	Let $0<\Theta<1$, $s_0s_1\in\R$,  $1\leq p_0,p_1,q_0,q_1\leq \infty$, and determine $p$ and $q$ as above. Then it
	holds
	\begin{enumerate}
		\item
			For the $\ell_q^s$-sequence spaces we have
			$$\bigl[\ell_{q_0}^{s_0}(\nabla_\Omega),\ell_{q_1}^{s_1}(\nabla_\Omega)\bigr]_\Theta
				=\ell_q^s(\nabla_\Omega),\qquad s=(1-\Theta)s_0+\Theta s_1.$$
			Consequently, it follows
			$$\bigl[L_{p_0}(\ell_{q_0}^{s_0}(\nabla_\Omega)),L_{p_1}(\ell_{q_1}^{s_1}(\nabla_\Omega))\bigr]_\Theta
				=L_p\bigl(\ell_q^s(\nabla_\Omega)\bigr).$$
		\item
			The method of retractions and coretractions then implies
			$$\bigl[f^{s_0}_{p_0,q_0}(\Z_\Omega),f^{s_1}_{p_1,q_1}(\Z_\Omega)\big]_\Theta
				=f^s_{p,q}(\Z_\Omega).$$
			Consequently, the wavelet isomorphism yields
			$$\bigl[F^{s_0,\mathrm{rloc}}_{p_0,q_0}(\Omega),F^{s_1,\mathrm{rloc}}_{p_1,q_1}(\Omega)\big]_\Theta
				=F^{s,\mathrm{rloc}}_{p,q}(\Omega).$$
	\end{enumerate}
\end{prop}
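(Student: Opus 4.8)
The plan is to climb a short ladder of spaces, from elementary weighted finite-dimensional sequence spaces up to the function spaces $F^{s,\mathrm{rloc}}_{p,q}(\Omega)$, invoking at each rung either a classical interpolation theorem or a transference principle (the method of retractions and coretractions, respectively the wavelet isomorphism). All the required input is recorded in the discussion preceding the statement, so the real work lies in assembling these ingredients and checking that the intertwining maps do not depend on the parameters $s,p,q$.

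For part (i) I would first treat the single-block couple $\bigl(2^{js_0}\ell_{q_0}^{N_j},2^{js_1}\ell_{q_1}^{N_j}\bigr)$. Complex interpolation of the unweighted finite-dimensional spaces is the classical Calderón formula $[\ell_{q_0}^{N_j},\ell_{q_1}^{N_j}]_\Theta=\ell_q^{N_j}$ with $1/q=(1-\Theta)/q_0+\Theta/q_1$, while scaling the norm of the $i$-th endpoint by a constant $c_i$ multiplies the norm of the interpolation space by $c_0^{1-\Theta}c_1^\Theta$; taking $c_i=2^{js_i}$ produces the weight $2^{j[(1-\Theta)s_0+\Theta s_1]}=2^{js}$, whence $[2^{js_0}\ell_{q_0}^{N_j},2^{js_1}\ell_{q_1}^{N_j}]_\Theta=2^{js}\ell_q^{N_j}$. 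Feeding these blocks into the vector-valued formula $[\ell_{q_0}(\mathcal{A}),\ell_{q_1}(\mathcal{B})]_\Theta=\ell_q([\mathcal{A},\mathcal{B}]_\Theta)$ recalled above, and using the identification $\ell_{q_i}^{s_i}(\nabla_\Omega)=\ell_{q_i}\bigl((2^{js_i}\ell_{q_i}^{N_j})_j\bigr)$, yields the first displayed identity. The second is then immediate from the Bochner-space formula $[L_{p_0}(A_0),L_{p_1}(A_1)]_\Theta=L_p([A_0,A_1]_\Theta)$ applied with $A_i=\ell_{q_i}^{s_i}(\nabla_\Omega)$.

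For part (ii) I would use that $f^s_{p,q}(\Z_\Omega)$ is a retract of $L_p(\ell_q^s(\nabla_\Omega))$: there is a coretraction $S$ embedding the sequence space into the Bochner space, built from the (normalized) characteristic functions of the Whitney cubes entering the definition of the $f^s_{p,q}$-quasi-norm, and a retraction $R$ with $RS=\mathrm{id}$, both bounded and---crucially---given by one and the same formula for every choice of $s,p,q$. Since the same pair $(R,S)$ acts simultaneously on both endpoints, the retraction theorem for the complex method gives $[f^{s_0}_{p_0,q_0},f^{s_1}_{p_1,q_1}]_\Theta=R\bigl([L_{p_0}(\ell_{q_0}^{s_0}),L_{p_1}(\ell_{q_1}^{s_1})]_\Theta\bigr)=R\bigl(L_p(\ell_q^s)\bigr)=f^s_{p,q}$, using part (i). Finally, Theorem \ref{thm-wavelet} supplies an isomorphism $I$ of $F^{s,\mathrm{rloc}}_{p,q}(\Omega)$ onto $f^s_{p,q}(\Z_\Omega)$ whose defining formula $f\mapsto\{2^{jd/2}(f,\Phi_{j,r})\}$ is again independent of $s,p,q$; hence $I$ is an isomorphism of Banach couples and the preceding identity transfers verbatim to the refined localization spaces.

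The main obstacle, as always with such transference arguments, is not any single interpolation computation but the verification that the maps $R,S$ and the wavelet isomorphism $I$ (together with $I^{-1}$) are bounded \emph{simultaneously} on both endpoint spaces, with norms controlled uniformly, so that the complex interpolation functor genuinely applies to the couples. Two smaller points deserve attention: the endpoint cases $q=\infty$ (and, for the function-space identity, the restriction $p_0,p_1<\infty$ under which $F^{s,\mathrm{rloc}}_{p,q}(\Omega)$ is defined), where the Calderón and Bochner formulas must be applied in their correct form---and where the remark following Theorem \ref{thm-wavelet} legitimizes also the borderline case $s=0$, $q=2$; and the admissibility condition $s>\sigma_{p,q}$, which here reads $s>0$ since $p,q\ge1$ force $\sigma_{p,q}=0$, and which is inherited by $s=(1-\Theta)s_0+\Theta s_1$ as a convex combination of the positive numbers $s_0,s_1$, so that every space occurring is well defined.
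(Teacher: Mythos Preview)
Your proposal is correct and follows essentially the same route as the paper: the paper does not give a separate formal proof but rather assembles the proposition from the interpolation formulas for vector-valued $\ell_q$-spaces (applied to the blocks $A_j=2^{js_0}\ell_{q_0}^{N_j}$, $B_j=2^{js_1}\ell_{q_1}^{N_j}$) and for Bochner spaces $L_p(A)$, followed by retraction/coretraction and the wavelet isomorphism --- exactly the ladder you describe. Your additional remarks on the parameter-independence of the maps $R$, $S$, $I$ and on the admissibility condition $s>\sigma_{p,q}$ make explicit what the paper leaves implicit; note only that the sequence-space identities in (i) and the first identity in (ii) hold for all $s_0,s_1\in\R$, while the restriction $s_0,s_1>0$ is needed solely for the final function-space identity.
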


Note that the last statement particularly (and trivially) applies to the case $p_0=p_1=p$ and $q_0=q_1=q$, i.e. we have
\begin{equation}\label{eq-interpol-easy}
	\bigl[F^{s_0,\mathrm{rloc}}_{p,q}(\Omega),F^{s_1,\mathrm{rloc}}_{p,q}(\Omega)\big]_\Theta
	=F^{s,\mathrm{rloc}}_{p,q}(\Omega).
\end{equation}

%%%%%%%%%%%%%%%%%%%%%%%%%%%%%%%%%%%%%%%%%%%%%%%%%%%%%%%%%%%%%%%%%%%%%%%%%%%%%%%%%%%%%%%%%%%%%%%%%%%%%%%%%%%%%%%%%%%%%%%%%%%%%%
%%%%%%%%%%%%%%%%%%%%%%%%%%%%%%%%%%%%%%%%%%%%%%%%%%%%%%%%%%%%%%%%%%%%%%%%%%%%%%%%%%%%%%%%%%%%%%%%%%%%%%%%%%%%%%%%%%%%%%%%%%%%%%

\subsection{Complex interpolation of Kondratiev spaces}

The interpolation results from Proposition \ref{prop-interpol-rloc} and the relation \eqref{eq-kond-rloc-1} from Theorem \ref{thm-kond-rloc} now serve as the basis in order to obtain  interpolation results for Kondratiev spaces.

We begin with our definition for Kondratiev spaces of fractional smoothness, motivated by
\eqref{eq-kond-rloc-1} and \eqref{eq-kond-rloc-2}:

\begin{defi}\label{def-kond-fractional}
	Let $s\in\R_+$, $1<p<\infty$ and $a\in\R$. Furthermore, let $D\subset\R^d$ be a domain of polyhedral type with singular
	set	$S\subset\partial D$, and $\Omega=\R^d\setminus S$. Then we define
	\begin{equation}\label{eq-frac-kond-1}
		\mathfrak{K}^s_{s,p}(\Omega):=F^{s,\mathrm{rloc}}_{p,2}(\Omega)
	\end{equation}
	as well as
	\begin{equation}\label{eq-frac-kond-2}
		\mathfrak{K}^s_{a,p}(\Omega):=T_{a-s}\mathfrak{K}^s_{s,p}(\Omega).
	\end{equation}
\end{defi}

\begin{rem}
	Here and in what follows we always restrict ourselves to non-negative real parameters $s\in\R_+$. In the literature,
	Kondratiev spaces with negative integer smoothness have been introduced via duality. We shall not follow this approach
	here, and instead leave the study of dual spaces of Kondratiev and refined localization spaces and their complex
	interpolation to future research.
	
	Let us also stress the fact that in our definition the case $s=0$ is included via
	$\mathfrak{K}^0_{0,p}(\Omega)=F^{0,\mathrm{rloc}}_{p,2}(\Omega)=L_p(\Omega)$.
\end{rem}

As an easy consequence of \eqref{eq-interpol-easy} and the interpolation property for isomorphisms, we see that this version of Kondratiev spaces with fractional smoothness can also be obtained from complex interpolation.

\begin{lem}
	Let $s\in\R_+$ and $m_1,m_2\in\N_0$ with $m_1<s<m_2$.  Furthermore, let $0<\Theta<1$ be such that $s=(1-\Theta)m_1+\Theta m_2$  
	and let $1<p<\infty$, $a\in\R$. Then we have
	\begin{equation}\label{eq-frac-kond-3}
		\mathfrak{K}^s_{a,p}(\Omega)
			=\bigl[\calk^{m_1}_{a+m_1-s,p}(\Omega),\calk^{m_2}_{a+m_2-s,p}(\Omega)\bigr]_\Theta.
	\end{equation}
	In particular, for $s\not\in\N_0$, let $m=[s]$ be the integer part of $s$, and $\Theta=\{s\}=s-[s]$ its fractional part.
	Then
	\begin{equation*}
		\mathfrak{K}^s_{a,p}(\Omega)
			=\bigl[\calk^m_{a-\Theta,p}(\Omega),\calk^{m+1}_{a+1-\Theta,p}(\Omega)\bigr]_\Theta.
	\end{equation*}
\end{lem}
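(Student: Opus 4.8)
The plan is to reduce the entire assertion to the already-established interpolation identity \eqref{eq-interpol-easy} for refined localization spaces, by transporting the problem through the shift isomorphism $T_{a-s}$ and then invoking the interpolation property for isomorphisms.

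First I would rewrite both endpoint spaces in a common form. By the Corollary following Theorem \ref{thm-kond-rloc} (which combines the identity $\calk^{m_i}_{m_i,p}(\Omega)=F^{m_i,\mathrm{rloc}}_{p,2}(\Omega)$ with the shift Proposition \ref{prop-shift}), applied with derivative order $m_i$ and weight parameter $a+m_i-s$, one obtains for $i\in\{1,2\}$
\[
	\calk^{m_i}_{a+m_i-s,p}(\Omega)
		=T_{(a+m_i-s)-m_i}F^{m_i,\mathrm{rloc}}_{p,2}(\Omega)
		=T_{a-s}F^{m_i,\mathrm{rloc}}_{p,2}(\Omega).
\]
The decisive observation is that the shift parameter $a-s$ is \emph{the same} at both endpoints. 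Equivalently, the single operator $T_{a-s}$ is an isomorphism from $F^{m_i,\mathrm{rloc}}_{p,2}(\Omega)$ onto $\calk^{m_i}_{a+m_i-s,p}(\Omega)$ for $i=1,2$, with common inverse $T_{s-a}$ (Proposition \ref{prop-shift}).

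The second step is to pull $T_{a-s}$ out of the interpolation functor. Since $T_{a-s}$ and its inverse $T_{s-a}$ map each endpoint of the couple $\{F^{m_1,\mathrm{rloc}}_{p,2}(\Omega),F^{m_2,\mathrm{rloc}}_{p,2}(\Omega)\}$ boundedly to the corresponding endpoint of $\{\calk^{m_1}_{a+m_1-s,p}(\Omega),\calk^{m_2}_{a+m_2-s,p}(\Omega)\}$, the interpolation property for isomorphisms (all spaces embedding continuously into $\mathcal{D}'(\Omega)$, so that the couples are admissible) yields
\[
	\bigl[\calk^{m_1}_{a+m_1-s,p}(\Omega),\calk^{m_2}_{a+m_2-s,p}(\Omega)\bigr]_\Theta
		=T_{a-s}\bigl[F^{m_1,\mathrm{rloc}}_{p,2}(\Omega),F^{m_2,\mathrm{rloc}}_{p,2}(\Omega)\bigr]_\Theta.
\]
Now apply \eqref{eq-interpol-easy} with $q=2$, $s_0=m_1$, $s_1=m_2$ and $s=(1-\Theta)m_1+\Theta m_2$ to identify the right-hand interpolation space as $F^{s,\mathrm{rloc}}_{p,2}(\Omega)$, and finally use Definition \ref{def-kond-fractional}, namely $F^{s,\mathrm{rloc}}_{p,2}(\Omega)=\mathfrak{K}^s_{s,p}(\Omega)$ and $T_{a-s}\mathfrak{K}^s_{s,p}(\Omega)=\mathfrak{K}^s_{a,p}(\Omega)$, to conclude that the whole chain equals $\mathfrak{K}^s_{a,p}(\Omega)$. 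The particular case is then immediate: choosing $m_1=m=[s]$ and $m_2=m+1$ forces $\Theta=s-m=\{s\}$, and substituting gives $a+m_1-s=a-\Theta$ and $a+m_2-s=a+1-\Theta$.

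The step requiring the most care is the boundary case $m_1=0$, where Theorem \ref{thm-kond-rloc} (stated for $m\in\N$) and the interpolation formula \eqref{eq-interpol-easy} (requiring $s_0>\sigma_{p,2}=0$) do not formally apply. Here I would invoke instead the conventions $\calk^0_{0,p}(\Omega)=L_p(\Omega)$ and $F^{0,\mathrm{rloc}}_{p,2}(\Omega)=L_p(\Omega)$ recorded after Definition \ref{def-kondratiev} and in the Remark following Theorem \ref{thm-wavelet}, which preserve the identity $\calk^{0}_{0,p}(\Omega)=F^{0,\mathrm{rloc}}_{p,2}(\Omega)$ and the extension of \eqref{eq-interpol-easy} to $s_0=0$, $q=2$; apart from this endpoint bookkeeping, the remaining content is the purely formal transport through $T_{a-s}$ described above.
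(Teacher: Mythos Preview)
Your argument is correct and follows essentially the same route as the paper: reduce to the case $a=s$ via the shift isomorphism $T_{a-s}$ (equivalently, transport through $T_{a-s}$ using the interpolation property for isomorphisms) and then invoke \eqref{eq-interpol-easy} with $q=2$. Your version is simply a spelled-out expansion of the paper's one-line proof, with the added care of explicitly treating the endpoint $m_1=0$ via the conventions $\calk^0_{0,p}(\Omega)=L_p(\Omega)=F^{0,\mathrm{rloc}}_{p,2}(\Omega)$.
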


This is a direct consequence of \eqref{eq-interpol-easy}, applied with $q=2$. We only note that
\eqref{eq-frac-kond-3} can be immediately reduced to the case $a=s$ by applying $T_{a-s}$ to both sides. In the next subsection we will demonstrate that our definition essentially coincides with the other standard approach towards (weighted) Sobolev spaces of fractional smoothness -- via a slightly different application of complex interpolation.

\begin{rem}
	Apart from our approach via the wavelet isomorphism, interpolation results for refined localization spaces can also be
	obtained via a standard retraction/coretracton argument, based directly on Definition \ref{def-rloc}. Then interpolation
	results are traced back to ones for vector-valued sequence spaces $\ell_p(F^s_{p,q}(\R^d))$.
\end{rem}

%%%%%%%%%%%%%%%%%%%%%%%%%%%%%%%%%%%%%%%%%%%%%%%%%%%%%%%%%%%%%%%%%%%%%%%%%%%%%%%%%%%%%%%%%%%%%%%%%%%%%%%%%%%%%%%%%%%%%%%%%%%%%%
%%%%%%%%%%%%%%%%%%%%%%%%%%%%%%%%%%%%%%%%%%%%%%%%%%%%%%%%%%%%%%%%%%%%%%%%%%%%%%%%%%%%%%%%%%%%%%%%%%%%%%%%%%%%%%%%%%%%%%%%%%%%%%

\subsection{An alternative approach to Kondratiev spaces with fractional smoothness}

In the literature, whenever they are needed, Kondratiev spaces with fractional smoothness parameters most often are introduced via complex interpolation, analogously to widely used definitions for Sobolev spaces of fractional smoothness.

\begin{defi}
	Let $D\subset\R^d$ be a domain of polyhedral type with singularity set $S$, and put $\Omega=\R^d\setminus S$. Furthermore,
	let $s\in\R_+\setminus\N$,  $1<p<\infty$, and  $a\in\R$. Put $m=[s]$ and $\Theta=\{s\}$.
	Then we define
	$$\widetilde{\mathfrak{K}}^s_{a,p}(\Omega)
		=\bigl[\calk^m_{a,p}(\Omega),\calk^{m+1}_{a,p}(\Omega)\bigr]_\Theta.$$
\end{defi}

Note that in contrast to \eqref{eq-frac-kond-3} here all spaces involved have the same weight parameter $a$.

We now aim at proving that, up to equivalent norms, these two versions of Kondratiev spaces with fractional smoothness coincide. The key tool will be the localization property from Proposition \ref{prop-localization}, together with the following simple lemma.

\begin{lem}
	Let $\{A,B\}$ be an arbitrary Banach couple, and let $\alpha,\beta>0$. Then it holds for $0<\Theta<1$
	$$[\alpha A,\beta B]_\Theta=\alpha^{1-\Theta}\beta^\Theta [A,B]_\Theta$$
	with equality of norms. Therein, for some Banach space $V$ and a positive real number $\gamma$ the space $\gamma V$
	refers to the set $V$ equipped with a scaled version of the norm of $V$, i.e. $\|v|\gamma V\|=\gamma\|v|V\|$ for all
	$v\in V$.
\end{lem}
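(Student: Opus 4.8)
The plan is to argue directly from the definition of Calder\'on's complex method, exploiting that $\alpha A$ and $A$ are literally the same vector space carrying proportional norms (and likewise for $\beta B$ and $B$), so that the associated spaces of admissible holomorphic functions coincide as sets and only the boundary norms get rescaled. Recall that for a Banach couple $\{X,Y\}$ one sets $\|x|[X,Y]_\Theta\|=\inf\{\|F|\mathcal{F}(X,Y)\|:F(\Theta)=x\}$, where $\mathcal{F}(X,Y)$ consists of the bounded continuous functions $F$ on the strip $\mathcal{S}=\{z:0\le\mathrm{Re}\,z\le1\}$, holomorphic in its interior and taking values in $X+Y$, equipped with $\|F|\mathcal{F}(X,Y)\|=\max\bigl(\sup_t\|F(it)|X\|,\sup_t\|F(1+it)|Y\|\bigr)$. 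Since $\alpha A+\beta B=A+B$ as sets, the admissibility classes $\mathcal{F}(\alpha A,\beta B)$ and $\mathcal{F}(A,B)$ consist of exactly the same functions.

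The key device is to multiply an admissible function by the scalar factor $\varphi(z)=e^{(z-\Theta)\log(\beta/\alpha)}$. This is an entire, nowhere-vanishing function which is bounded together with its reciprocal on $\mathcal{S}$; hence $F\mapsto\varphi F$ maps $\mathcal{F}(\alpha A,\beta B)$ bijectively onto $\mathcal{F}(A,B)$, and, crucially, $\varphi(\Theta)=1$, so it preserves the value at $\Theta$. First I would record the two boundary moduli $|\varphi(it)|=\alpha^{\Theta}\beta^{-\Theta}$ and $|\varphi(1+it)|=\alpha^{\Theta-1}\beta^{1-\Theta}$. Then, using $\|\cdot|\alpha A\|=\alpha\|\cdot|A\|$ and $\|\cdot|\beta B\|=\beta\|\cdot|B\|$, a short computation shows that on \emph{both} boundary lines the passage from $F$ to $g=\varphi F$ produces the same constant factor $\kappa=\alpha^{\Theta-1}\beta^{-\Theta}$; this is precisely what forces the choice of exponent $\log(\beta/\alpha)$ in $\varphi$. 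Consequently $\|g|\mathcal{F}(A,B)\|=\kappa\,\|F|\mathcal{F}(\alpha A,\beta B)\|$.

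To finish, I would fix $x\in A+B$ and take the infimum over all $F$ with $F(\Theta)=x$; because $g=\varphi F$ ranges exactly over those members of $\mathcal{F}(A,B)$ with $g(\Theta)=x$, the two infima transform by the same factor $\kappa$, yielding $\|x|[A,B]_\Theta\|=\alpha^{\Theta-1}\beta^{-\Theta}\|x|[\alpha A,\beta B]_\Theta\|$, equivalently $\|x|[\alpha A,\beta B]_\Theta\|=\alpha^{1-\Theta}\beta^{\Theta}\|x|[A,B]_\Theta\|$, which is the asserted equality of norms.

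The computation itself is entirely routine; the only point demanding genuine care is the choice of the multiplier $\varphi$. One must pick its exponent so that the two boundary lines $\mathrm{Re}\,z=0$ and $\mathrm{Re}\,z=1$ acquire the \emph{same} rescaling constant --- any other choice would leave a mismatch between the two traces and deliver mere norm equivalence rather than equality --- while simultaneously keeping $\varphi(\Theta)=1$ so that the constraint $F(\Theta)=x$ is respected. Verifying that $\varphi$ and $\varphi^{-1}$ genuinely preserve the admissibility class (holomorphy in the interior, boundedness on $\mathcal{S}$, and the correct boundary continuity) is immediate from their being bounded scalar holomorphic functions, but should be stated for completeness.
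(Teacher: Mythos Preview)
Your argument is correct. Working directly with Calder\'on's admissible class and the holomorphic multiplier $\varphi(z)=e^{(z-\Theta)\log(\beta/\alpha)}$ is exactly the kind of argument the paper alludes to in its opening sentence (where it mentions transforming $f\in\mathcal{F}(A,B)$ into $g\in\mathcal{F}(\alpha A,\beta B)$ via a scalar factor), but the paper does not spell this out and instead gives its detailed proof by a different route: it applies the interpolation property to the identity operator, noting $\|\mathrm{Id}:A\to\alpha A\|=\alpha$ and $\|\mathrm{Id}:B\to\beta B\|=\beta$, so that $\mathrm{Id}:[A,B]_\Theta\to[\alpha A,\beta B]_\Theta$ has norm at most $\alpha^{1-\Theta}\beta^\Theta$, and then obtains the reverse inequality from $A=\alpha^{-1}(\alpha A)$.

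Your approach has the advantage of being self-contained and yielding equality of norms in one stroke, without appealing to the interpolation inequality as a black box; the normalization $\varphi(\Theta)=1$ is a nice touch that keeps the constraint $F(\Theta)=x$ intact and makes the bookkeeping transparent. The paper's approach is shorter to write down once the interpolation property is taken for granted, and it generalizes immediately to any interpolation functor of exponent $\Theta$, not just the complex method. Both are standard; yours is closer to the ``direct'' argument the paper only gestures at.
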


\begin{proof}
	This fact can be easily seen directly from the definition of the complex method: Without going into details, every function
	$f\in\mathcal{F}(A,B)$ can be transformed into a corresponding function $g\in\mathcal{F}(\alpha A,\beta B)$ with equal
	norm by putting $g(z)=\alpha^{\theta-1}\beta^{-\theta}f(z)$.
	
	To avoid having to recall the various definitions,  let's instead give a short proof based on the interpolation property.
	It is clear from the definition that the identity operator has norms
	$$\|\text{Id}:A\rightarrow\alpha A\|=\alpha\qquad\text{as well as}\qquad\|\text{Id}:B\rightarrow\beta B\|=\beta.$$
	Then the interpolation property yields that $\text{Id}$ maps $[A,B]_\Theta$ into $[\alpha A,\beta B]_\Theta$ with norm
	at most $\alpha^{1-\Theta}\beta^\Theta$, or equivalently that $\text{Id}$ maps
	$\alpha^{1-\Theta}\beta^\Theta [A,B]_\Theta$ into $[\alpha A,\beta B]_\Theta$ with norm at most $1$. The reverse
	inequality now can be obtained from the obvious identification $A=\alpha^{-1}(\alpha A)$, and thus ultimately
	$$\|u|[\alpha A,\beta B]_\Theta\|=\alpha^{1-\Theta}\beta^\Theta\|u|[A,B]_\Theta\|
		=\|u|\alpha^{1-\Theta}\beta^\Theta [A,B]_\Theta\|.$$
	This proves the claim.
\end{proof}

\begin{satz}\label{thm-equivalence}
Let $D\subset \mathbb{R}^d$ be a domain of polyhedral type with singularity set $S$,  and put $\Omega=\R^d\setminus S$.  
	%Let $D$ and $\Omega$ be as in Corollary \ref{cor-localization}. 
	Moreover, let $s\in\R_+$, $1<p<\infty$, and $a\in\R$. Then
	it holds
	$$\widetilde{\mathfrak{K}}^s_{a,p}(\Omega)=\mathfrak{K}^s_{a,p}(\Omega)$$
	in the sense of equivalent norms.
\end{satz}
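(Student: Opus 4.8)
The plan is to reduce both spaces, via the localization principle, to weighted $\ell_p$-sums of interpolation spaces living on the single dyadic layers $D_j=\{x\in\Omega:2^{-j-1}<\rho(x)<2^{-j+1}\}$, and then to exploit the scaling lemma on each layer, where the weight $\rho$ is essentially constant. It suffices to treat $s\notin\N$, since for integer $s$ both definitions reduce to the classical space $\calk^s_{a,p}(\Omega)$ by \eqref{eq-kond-rloc-1} and Proposition \ref{prop-shift}. So write $m=[s]$, $\Theta=\{s\}$. By \eqref{eq-frac-kond-3}, specialized to $m_1=m$, $m_2=m+1$, we have the identification
\[
\mathfrak{K}^s_{a,p}(\Omega)=\bigl[\calk^m_{a-\Theta,p}(\Omega),\calk^{m+1}_{a+1-\Theta,p}(\Omega)\bigr]_\Theta,
\]
whereas by definition
\[
\widetilde{\mathfrak{K}}^s_{a,p}(\Omega)=\bigl[\calk^m_{a,p}(\Omega),\calk^{m+1}_{a,p}(\Omega)\bigr]_\Theta.
\]
Thus everything reduces to showing that these two interpolation spaces agree up to equivalent norms; the only difference between the two couples lies in the weight parameters of the endpoints.

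Next I would invoke localization. Using Proposition \ref{prop-localization} and Corollary \ref{cor-localization} for $\Omega=\R^d\setminus S$, fix a resolution of unity $(\varphi_j)_j$ subordinate to the layers $D_j$ together with a fattened family $(\widetilde\varphi_j)_j$ satisfying $\widetilde\varphi_j\equiv 1$ on $\supp\varphi_j$. The operators $Ru=(\varphi_j u)_j$ and $S(w_j)_j=\sum_j\widetilde\varphi_j w_j$ form a retraction/coretraction pair ($SR=\Id$) which acts simultaneously and boundedly on all Kondratiev spaces occurring above, the boundedness and the norm equivalence being precisely the content of Proposition \ref{prop-localization} together with the fact that smooth compactly supported functions are pointwise multipliers for Kondratiev spaces. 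Since $R$ and $S$ do not depend on the parameters, the method of retractions and coretractions (as already used in Proposition \ref{prop-interpol-rloc}), combined with the interpolation formula $[\ell_p(\mathcal{A}),\ell_p(\mathcal{B})]_\Theta=\ell_p([\mathcal{A},\mathcal{B}]_\Theta)$ for vector-valued sequence spaces, reduces both norms to $\ell_p$-sums of \emph{layerwise} interpolation norms,
\[
\|u|\mathfrak{K}^s_{a,p}(\Omega)\|\sim\Bigl(\sum_j\|\varphi_j u\,|\,[\calk^m_{a-\Theta,p}(D_j),\calk^{m+1}_{a+1-\Theta,p}(D_j)]_\Theta\|^p\Bigr)^{1/p},
\]
and analogously for $\widetilde{\mathfrak{K}}^s_{a,p}(\Omega)$ with the couple $\{\calk^m_{a,p}(D_j),\calk^{m+1}_{a,p}(D_j)\}$.

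It then remains to compare the two layerwise interpolation spaces uniformly in $j$. On $D_j$ the weight satisfies $\rho\sim 2^{-j}$, so for any exponent $\beta$ the factor $\rho^\beta$ is equivalent to $2^{-j\beta}$ with constants independent of $j$; consequently
\[
\calk^m_{a-\Theta,p}(D_j)\cong 2^{-j\Theta}\calk^m_{a,p}(D_j),\qquad \calk^{m+1}_{a+1-\Theta,p}(D_j)\cong 2^{j(1-\Theta)}\calk^{m+1}_{a,p}(D_j),
\]
the underlying sets being identical and the equivalence constants uniform in $j$. Applying the scaling lemma to the layer couple produces the scalar factor $(2^{-j\Theta})^{1-\Theta}(2^{j(1-\Theta)})^{\Theta}=2^{-j\Theta(1-\Theta)+j\Theta(1-\Theta)}=1$, so that
\[
[\calk^m_{a-\Theta,p}(D_j),\calk^{m+1}_{a+1-\Theta,p}(D_j)]_\Theta\cong[\calk^m_{a,p}(D_j),\calk^{m+1}_{a,p}(D_j)]_\Theta
\]
with constants independent of $j$. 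Summing the $p$-th powers over $j$ yields the desired norm equivalence.

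The main obstacle is exactly this uniformity in $j$: one has to ensure that the retraction/coretraction bounds and the layerwise weight equivalences do not deteriorate as $j\to\infty$, which is what allows the passage from the endpointwise estimates to the summed $\ell_p$-norms. The happy cancellation of the two scaling powers to $2^0=1$ — the quantitative reflection of the fact that shifting the weight parameter of $\calk^m$ and of $\calk^{m+1}$ in opposite directions is invisible after interpolation at the exponent $\Theta$ — is the crux that makes the summation over the infinitely many layers work.
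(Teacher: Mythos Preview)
Your proof is correct and follows essentially the same route as the paper: both arguments use the localization principle from Proposition~\ref{prop-localization} to set up a retraction/coretraction pair into an $\ell_p$-sum, reduce via $[\ell_p(\mathcal{A}),\ell_p(\mathcal{B})]_\Theta=\ell_p([\mathcal{A},\mathcal{B}]_\Theta)$ to layerwise interpolation, and then exploit $\rho\sim 2^{-j}$ on each layer together with the scaling lemma $[\alpha A,\beta B]_\Theta=\alpha^{1-\Theta}\beta^\Theta[A,B]_\Theta$ to see the cancellation $(2^{-j\Theta})^{1-\Theta}(2^{j(1-\Theta)})^\Theta=1$. The only cosmetic difference is that the paper keeps the global target space $\calk^m_{a,p}(\Omega)$ in each component (using sequences of scaled copies $2^{jb}\calk^m_{a,p}(\Omega)$) rather than restricting to $\calk^m_{a,p}(D_j)$, and it builds the fattened functions explicitly as $\psi_j=\varphi_{j-1}+\varphi_j+\varphi_{j+1}$; neither point affects the argument.
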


\begin{proof}
	We first transfer the localization property from Proposition \ref{prop-localization} to both scales of interpolation
	spaces, and subsequently use the previous lemma to show that these scales actually coincide.
	
	{\bf Step 1:} Let $(\varphi_j)_{j\geq 0}$ be a partition of unity as in Corollary \ref{cor-localization}, such that
	Proposition \ref{prop-localization}	becomes applicable.	Moreover, for $m\in\N$ we put
	$$\Phi:\calk^m_{a,p}(\Omega)\rightarrow\ell_p\bigl(\calk^m_{a,p}(\Omega)\bigr),\qquad
		u\mapsto (\varphi_j u)_{j\geq 0}.$$
	By Proposition \ref{prop-localization} this mapping is a bounded linear injection; in terms of abstract interpolation
	theory, $\Phi$ is a coretraction. Put $\psi_j=\varphi_{j-1}+\varphi_j+\varphi_{j+1}$, $j\geq 1$ and
	$\psi_0=\varphi_0+\varphi_1$. Then
	$$\Psi:\ell_p\bigl(\calk^m_{a,p}(\Omega)\bigr)\rightarrow\calk^m_{a,p}(\Omega),\qquad
		(u_j)_{j\geq 0}\mapsto\sum_{j\geq 0}\psi_j u_j,$$
	is a corresponding retraction, i.e. it is a bounded linear operator, and we have $\Psi\circ\Phi=\text{Id}$. The latter
	identity immediately follows from the observation $\psi_j(x)=1$ for all $x\in\supp\varphi_j$ for all $j\geq 0$, and
	$\sum_{j\geq 0}\varphi_j(x)=1$ for all $x\in\Omega$. It remains to show the boundedness of $\Psi$.
	
	To see this we first note that for every $x\in\Omega$, at most four of the functions $\psi_j$ are non-vanishing, since
	$\supp\psi_j\subset\{x\in\Omega:2^{-j-2}\leq\rho(x)\leq 2^{-j+2}\}$. This property is commonly referred to as uniformly
	bounded overlap. We further observe that the bound on the derivatives of the functions $\varphi_j$ transfers to one for
	$\psi_j$, i.e. we have $|\partial^\alpha\psi_j(x)|\leq\widetilde c_\alpha 2^{j|\alpha|}$ for all $j\geq 0$,
	$x\in\supp\psi_j$,  and all multiindices $\alpha\in\N_0^d$. Altogether this yields
	\begin{align*}
		\|\Psi\{u_j\}_{j\geq 0}|\calk^m_{a,p}(\Omega)\|^p
			&=\int_\Omega\sum_{|\alpha|\leq m}\biggl|
				\rho(x)^{|\alpha|-a}\partial^\alpha\biggl(\sum_{j\geq 0}\psi_j(x)u_j(x)\biggr)\biggr|^p\,dx\\
			&\sim\sum_{j\geq 0}\int_{\supp\psi_j}\sum_{|\alpha|\leq m}\bigl|
				\rho(x)^{|\alpha|-a}\partial^\alpha\bigl(\psi_j(x)u_j(x)\bigr)\bigr|^p\,dx\\
			&\sim\sum_{j\geq 0}\int_{\supp\psi_j}\sum_{|\alpha|\leq m}2^{-j(|\alpha|-a)p}\biggl|
				\sum_{\beta\leq\alpha}\partial^\beta\psi_j(x)\partial^{\alpha-\beta}u_j(x)\biggr|^p\,dx\\
			&\lesssim\sum_{j\geq 0}\int_{\supp\psi_j}\sum_{|\alpha|\leq m}2^{-j(|\alpha|-a)p}
				\sum_{\beta\leq\alpha}2^{j|\beta|p}\bigl|\partial^{\alpha-\beta}u_j(x)\bigr|^p\,dx\\
			&\sim\sum_{j\geq 0}\int_{\supp\psi_j}\sum_{|\alpha|\leq m}
				\sum_{\beta\leq\alpha}\Bigl|\rho^{|\alpha-\beta|-a}\partial^{\alpha-\beta}u_j(x)\Bigr|^p\,dx\\
			&\lesssim\sum_{j\geq 0}\int_\Omega\sum_{|\alpha|\leq m}\Bigl|\rho^{|\alpha|-a}\partial^\alpha u_j(x)\Bigr|^p\,dx\\
			&=\bigl\|\{u_j\}_{j\geq 0}\big|\ell_p\bigl(\calk^m_{a,p}(\Omega)\bigr)\|^p.
	\end{align*}
	This shows the boundedness of the map $\Psi$.
	
	{\bf Step 2:} With the help of these maps we can transfer interpolation results for spaces $\ell_p(A)$ or
	$\ell_p(\mathcal{A})$ (where as  before $\mathcal{A}=(A_j)_{j\geq 0}$ is a family of Banach spaces) to interpolation
	results for Kondratiev spaces. More precisely, the abstract theory now guarantees
	$$\bigl\|u\big|\bigl[\calk^{m_0}_{a_0,p_0}(\Omega),\calk^{m_1}_{a_1,p_1}(\Omega)\bigr]_\Theta\bigr\|
		\sim\bigl\|\Phi u\big|\big[\ell_{p_0}\bigl(\calk^{m_0}_{a_0,p_0}(\Omega)\bigr),
			\ell_{p_1}\bigl(\calk^{m_1}_{a_1,p_1}(\Omega)\bigr)\bigr]_\Theta\bigr\|$$
	for all admissible choices of parameters.
	
	Upon noting that for $p_0=p_1=p$ we have
	$$\big[\ell_p\bigl(\calk^{m_0}_{a_0,p}(\Omega)\bigr),\ell_p\bigl(\calk^{m_1}_{a_1,p}(\Omega)\bigr)\bigr]_\Theta
		=\ell_p\Bigl(\bigl[\calk^{m_0}_{a_0,p}(\Omega),\calk^{m_1}_{a_1,p}(\Omega)\bigr]_\Theta\Bigr),$$
	with suitable choices for $m_0$, $m_1$, $a_0$ and $a_1$ (according to the respective definitions for Kondratiev spaces
	with fractional smoothness) we now immediately conclude that Proposition \ref{prop-localization}
	transfers to both $\mathfrak{K}^s_{a,p}(D)$ and $\widetilde{\mathfrak{K}}^s_{a,p}(D)$.
	
	{\bf Step 3:} We once again recall that by construction the weight function $\rho$ is essentially constant on
	$\Omega_j\supset\supp\varphi_j$, i.e. $\rho(x)\sim 2^{-j}$ for all $x\in\Omega_j$. As a particular
	consequence we now obtain
	$$\|\varphi_j u|\calk^m_{a+b,p}(\Omega)\|
		\sim 2^{jb}\|\varphi_j u|\calk^m_{a,p}(\Omega,\Omega_j)\|$$
	for all $b\in\R$. We conclude
	$$\|u|\calk^m_{a+b,p}(\Omega)\|
		\sim\bigl\|\Phi u\big|\ell_p\bigl(\calk^m_{a+b,p}(\Omega)\bigr)\bigr\|
		\sim\Bigl\|\Phi u\Big|\ell_p\Bigl(\bigl(2^{jb}\calk^m_{a,p}(\Omega)\bigr)_{j\geq 0}\Bigr)\Bigr\|.$$
	The interpolation property now yields
	\begin{align*}
		\|u|\widetilde{\mathfrak{K}}^s_{a,p}(\Omega)\|
			&\sim\bigl\|\Phi u|\big|\ell_p\bigl(\widetilde{\mathfrak{K}}^s_{a,p}(\Omega)\bigr)\bigr\|\\
			&=\Bigl\|\Phi u\Big|\ell_p\Bigl(\bigl([\calk^m_{a,p}(\Omega),\calk^{m+1}_{a,p}(\Omega)]_\Theta\bigr)_{j\geq 0}
				\Bigr)\Bigr\|\\
			&=\Bigl\|\Phi u\Big|\ell_p\Bigl(\bigl([2^{-j\Theta}\calk^m_{a,p}(\Omega),
				2^{j(1-\Theta)}\calk^{m+1}_{a,p}(\Omega)]_\Theta\bigr)_{j\geq 0}\Bigr)\Bigr\|\\
			&\sim\Bigl\|\Phi u\Big|\ell_p\Bigl(\bigl([\calk^m_{a-\Theta,p}(\Omega),
				\calk^{m+1}_{a+1-\Theta,p}(\Omega)]_\Theta\bigr)_{j\geq 0}\Bigr)\Bigr\|\\
			&=\bigl\|\Phi u|\big|\ell_p\bigl(\mathfrak{K}^s_{a,p}(\Omega)\bigr)\bigr\|\sim\|u|\mathfrak{K}^s_{a,p}(\Omega)\|,
	\end{align*}
	for all $u\in C_*^\infty(\Omega,S)$, i.e.,   smooth functions with compact support outside $S$.  But since  this space is  dense in all spaces
	involved this already proves the claim.
\end{proof}

As a consequence of this theorem,  from now on we will always use the notation $\mathfrak{K}^s_{a,p}(\Omega)$.

%%%%%%%%%%%%%%%%%%%%%%%%%%%%%%%%%%%%%%%%%%%%%%%%%%%%%%%%%%%%%%%%%%%%%%%%%%%%%%%%%%%%%%%%%%%%%%%%%%%%%%%%%%%%%%%%%%%%%%%%%%%%%%
%%%%%%%%%%%%%%%%%%%%%%%%%%%%%%%%%%%%%%%%%%%%%%%%%%%%%%%%%%%%%%%%%%%%%%%%%%%%%%%%%%%%%%%%%%%%%%%%%%%%%%%%%%%%%%%%%%%%%%%%%%%%%%

\subsection{A general interpolation result for Kondratiev spaces}

With the help of a reasoning similar to the proof of Theorem \ref{thm-equivalence} we can now present an interpolation result for arbitrary pairs of Kondratiev spaces, within the full scale of parameters $s$ and $a$.

\begin{satz}\label{thm-general-interpol}
%	Let $\Omega$ be as in Corollary \ref{cor-localization}. 
	Let $\Omega=\R^d\setminus S$, where $S$ is the singularity set of some domain $D$ of polyhedral type.  Further, let $1<p<\infty$, $s_0,s_1\in\R_+$,  and $a_0,a_1\in\R$.
	Then it holds
	$$\mathfrak{K}^s_{a,p}(\Omega)
		=\bigl[\mathfrak{K}^{s_0}_{a_0,p}(\Omega),\mathfrak{K}^{s_1}_{a_1,p}(\Omega)\bigr]_\Theta,$$
	where
	$$s=(1-\Theta)s_0+\Theta s_1\qquad\text{and}\qquad a=(1-\Theta)a_0+\Theta a_1\,.$$
\end{satz}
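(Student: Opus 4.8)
The plan is to mimic the proof of Theorem \ref{thm-equivalence}: localize by means of the resolution of unity $(\varphi_j)_{j\ge0}$ from Corollary \ref{cor-localization} adapted to the layers $\Omega_j=\{x\in\Omega:2^{-j-1}<\rho(x)<2^{-j+1}\}$, transfer the interpolation problem to vector-valued sequence spaces over these layers, and there replace the weighted Kondratiev norms by refined localization norms (whose interpolation is governed by Proposition \ref{prop-interpol-rloc}). With $\psi_j=\varphi_{j-1}+\varphi_j+\varphi_{j+1}$ as in Theorem \ref{thm-equivalence}, the maps $\Phi:u\mapsto(\varphi_j u)_{j\ge0}$ and $\Psi:(u_j)_{j\ge0}\mapsto\sum_j\psi_j u_j$ were shown there to form a bounded coretraction/retraction pair with $\Psi\circ\Phi=\mathrm{Id}$, and the localization norm equivalence $\|u\,|\,\mathfrak{K}^s_{a,p}(\Omega)\|\sim\|\Phi u\,|\,\ell_p(\mathfrak{K}^s_{a,p}(\Omega))\|$ was transferred there to the fractional spaces as well. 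These facts will be used for each of the three spaces $\mathfrak{K}^{s_0}_{a_0,p}$, $\mathfrak{K}^{s_1}_{a_1,p}$, $\mathfrak{K}^s_{a,p}$ involved.

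The decisive ingredient is the \emph{local} identity that, on a single layer, a weighted Kondratiev norm is, up to a layer-dependent constant, a refined localization norm. Indeed, since $\rho(x)\sim2^{-j}$ on $\supp\varphi_j$ and $(2^j\rho)^{\pm(s-a)}$ are, uniformly in $j$, admissible pointwise multipliers at scale $2^{-j}$ (their derivatives obey $|\partial^\alpha(2^j\rho)^{s-a}|\lesssim2^{j|\alpha|}$ on $\Omega_j$), the relation $\mathfrak{K}^s_{a,p}(\Omega)=T_{a-s}F^{s,\mathrm{rloc}}_{p,2}(\Omega)$ from \eqref{eq-frac-kond-1} and \eqref{eq-frac-kond-2} will yield
\[
	\|\varphi_j v\,|\,\mathfrak{K}^s_{a,p}(\Omega)\|\sim2^{j(a-s)}\|\varphi_j v\,|\,F^{s,\mathrm{rloc}}_{p,2}(\Omega)\|,\qquad j\ge0,
\]
with constants independent of $j$ and $v$. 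Combined with the localization norm equivalence, this shows that for each $i\in\{0,1\}$ the map $\Phi$ is a bounded coretraction
\[
	\Phi:\mathfrak{K}^{s_i}_{a_i,p}(\Omega)\longrightarrow Y_i:=\ell_p\Bigl(\bigl(2^{j(a_i-s_i)}F^{s_i,\mathrm{rloc}}_{p,2}(\Omega)\bigr)_{j\ge0}\Bigr),
\]
with common retraction $\Psi$, where $2^{jc}F^{s,\mathrm{rloc}}_{p,2}$ denotes $F^{s,\mathrm{rloc}}_{p,2}$ equipped with the $2^{jc}$-scaled norm in the notation of the rescaling lemma above.

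By the retraction/coretraction principle for the complex method one then has $\|u\,|\,[\mathfrak{K}^{s_0}_{a_0,p},\mathfrak{K}^{s_1}_{a_1,p}]_\Theta\|\sim\|\Phi u\,|\,[Y_0,Y_1]_\Theta\|$, and the couple $\{Y_0,Y_1\}$ is harmless. Applying first the interpolation formula for $\ell_p$ of families of Banach spaces (as used in the derivation of Proposition \ref{prop-interpol-rloc}), then componentwise the rescaling lemma, and finally Proposition \ref{prop-interpol-rloc}(ii) with $p_0=p_1=p$, $q_0=q_1=2$, I will obtain
\[
	[Y_0,Y_1]_\Theta=\ell_p\Bigl(\bigl(2^{j[(1-\Theta)(a_0-s_0)+\Theta(a_1-s_1)]}\,[F^{s_0,\mathrm{rloc}}_{p,2},F^{s_1,\mathrm{rloc}}_{p,2}]_\Theta\bigr)_{j\ge0}\Bigr)=\ell_p\Bigl(\bigl(2^{j(a-s)}F^{s,\mathrm{rloc}}_{p,2}(\Omega)\bigr)_{j\ge0}\Bigr),
\]
using $(1-\Theta)(a_0-s_0)+\Theta(a_1-s_1)=a-s$ together with $[F^{s_0,\mathrm{rloc}}_{p,2},F^{s_1,\mathrm{rloc}}_{p,2}]_\Theta=F^{s,\mathrm{rloc}}_{p,2}$. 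But the right-hand side is exactly the target $Y$ for which $\Phi:\mathfrak{K}^s_{a,p}(\Omega)\to Y$ is a coretraction with retraction $\Psi$, whence $\|\Phi u\,|\,[Y_0,Y_1]_\Theta\|=\|\Phi u\,|\,Y\|\sim\|u\,|\,\mathfrak{K}^s_{a,p}(\Omega)\|$; chaining the equivalences gives the claim.

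The main obstacle I expect is the fractional local equivalence displayed above: for integer smoothness it is the elementary Leibniz computation carried out in Step~3 of the proof of Theorem \ref{thm-equivalence}, but for non-integer $s$ it must be read as a uniform-in-$j$ pointwise-multiplier estimate for $(2^j\rho)^{\pm(s-a)}$ on $F^{s,\mathrm{rloc}}_{p,2}(\Omega)$, which I would justify either through the wavelet characterization of Theorem \ref{thm-wavelet} or directly from the weighted description \eqref{eq-refined-weight}. A secondary technical point is the simultaneous boundedness of $\Phi$ and $\Psi$ as a coretraction/retraction pair into the scaled sequence spaces $Y_i$; as in Theorem \ref{thm-equivalence} this will follow from the same multiplier bounds together with the uniformly bounded overlap of the $\psi_j$. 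Finally, since all identities are first obtained on the dense subspace $C^\infty_\ast(\Omega,S)$, one concludes by density exactly as at the end of the proof of Theorem \ref{thm-equivalence}.
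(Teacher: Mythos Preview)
Your proposal is correct and follows the same strategy as the paper: localize via the pair $(\Phi,\Psi)$, transfer to $\ell_p$ of families of Banach spaces, and combine the rescaling lemma with the layer-wise fact $\rho\sim 2^{-j}$ on $\supp\varphi_j$. The only variation is that you pass on each layer directly to $F^{s_i,\mathrm{rloc}}_{p,2}(\Omega)$ and interpolate via Proposition~\ref{prop-interpol-rloc}, whereas the paper stays inside the Kondratiev scale, first invoking~\eqref{eq-frac-kond-3} and then applying two successive weight shifts $a+s_i-s\to a\to a_i$ via the rescaling lemma; the fractional local multiplier estimate you single out as the main obstacle is exactly what is needed (and used tacitly) in either route.
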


\begin{proof}
	With essentially the same steps as before, we find
	\begin{align*}
		\|u|\mathfrak{K}^s_{a,p}(\Omega)\|
			&\sim\bigl\|\Phi u|\big|\ell_p\bigl(\mathfrak{K}^s_{a,p}(\Omega)\bigr)\bigr\|\\
			&=\Bigl\|\Phi u\Big|\ell_p\Bigl(\bigl([\mathfrak{K}^{s_0}_{a+s_0-s,p}(\Omega),
				\mathfrak{K}^{s_1}_{a+s_1-s,p}(\Omega)]_\Theta\bigr)_{j\geq 0}\Bigr)\Bigr\|\\
			&\sim\Bigl\|\Phi u\Big|\ell_p\Bigl(\bigl([2^{j(s_0-s)}\mathfrak{K}^{s_0}_{a,p}(\Omega),
				2^{j(s_1-s)}\mathfrak{K}^{s_1}_{a,p}(\Omega)]_\Theta\bigr)_{j\geq 0}\Bigr)\Bigr\|\\
			&=\Bigl\|\Phi u\Big|\ell_p\Bigl(\bigl([\mathfrak{K}^{s_0}_{a,p}(\Omega),
				\mathfrak{K}^{s_1}_{a,p}(\Omega)]_\Theta\bigr)_{j\geq 0}\Bigr)\Bigr\|\\
			&=\Bigl\|\Phi u\Big|\ell_p\Bigl(\bigl([2^{j(a-a_0)}\mathfrak{K}^{s_0}_{a,p}(\Omega),
				2^{j(a-a_1)}\mathfrak{K}^{s_1}_{a,p}(\Omega)]_\Theta\bigr)_{j\geq 0}\Bigr)\Bigr\|\\
			&\sim\Bigl\|\Phi u\Big|\ell_p\Bigl(\bigl([\mathfrak{K}^{s_0}_{a_0,p}(\Omega),
				\mathfrak{K}^{s_1}_{a_1,p}(\Omega)]_\Theta\bigr)_{j\geq 0}\Bigr)\Bigr\|\\
			&=\bigl\|\Phi u|\big|\ell_p\bigl([\mathfrak{K}^{s_0}_{a_0,p}(\Omega),
					\mathfrak{K}^{s_1}_{a_1,p}(\Omega)]_\Theta\bigr)\bigr\|
				\sim\bigl\|u|\big|[\mathfrak{K}^{s_0}_{a_0,p}(\Omega),\mathfrak{K}^{s_1}_{a_1,p}(\Omega)]_\Theta\bigr\|
	\end{align*}
	for all $u\in C_*^\infty(\Omega,S)$.  The last step once again is a consequence of the interpolation property applied to the
	operator $\Phi$.
\end{proof}

Let us explicitly note an important special case.

\begin{cor} Under the assumptions of Theorem \ref{thm-general-interpol}
	it holds
	$$\mathfrak{K}^s_{a,p}(\Omega)
		=\bigl[\mathfrak{K}^{s_0}_{a,p}(\Omega),\mathfrak{K}^{s_1}_{a,p}(\Omega)\bigr]_\Theta$$
	for arbitrary %$s_0,s_1\in \R_+$ and 
	$a\in \R$. In particular, for parameters $m,m_0,m_1\in\N$ and $0<\Theta<1$ such that
	$m=(1-\Theta)m_0+\Theta m_1$ we obtain
	$$\mathcal{K}^m_{a,p}(\Omega)
		=\bigl[\mathcal{K}^{m_0}_{a,p}(\Omega),\mathcal{K}^{m_1}_{a,p}(\Omega)\bigr]_\Theta$$
	in the sense of equivalent norms.
\end{cor}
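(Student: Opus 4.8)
The plan is to obtain both assertions directly from Theorem \ref{thm-general-interpol}, treating the displayed formula as a mere specialization and the ``in particular'' statement as an additional identification of the fractional endpoints with classical Kondratiev spaces. Since the corollary follows from results already established, there is no genuine obstacle; the only point requiring care is the integer identification discussed below.

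First I would establish the general formula by invoking Theorem \ref{thm-general-interpol} with the choice $a_0=a_1=a$. Under this choice the interpolated weight becomes $a=(1-\Theta)a_0+\Theta a_1=a$, so the relation furnished by the theorem reads precisely $\mathfrak{K}^s_{a,p}(\Omega)=[\mathfrak{K}^{s_0}_{a,p}(\Omega),\mathfrak{K}^{s_1}_{a,p}(\Omega)]_\Theta$ with $s=(1-\Theta)s_0+\Theta s_1$, valid for arbitrary $a\in\R$. No further computation is needed here.

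For the second assertion the substantial step is to verify that for an integer smoothness parameter $m\in\N$ the fractional space $\mathfrak{K}^m_{a,p}(\Omega)$ coincides, with equivalent norms, with the classical Kondratiev space $\calk^m_{a,p}(\Omega)$. To see this I would first recall Theorem \ref{thm-kond-rloc}, which yields $\calk^m_{m,p}(\Omega)=F^{m,\mathrm{rloc}}_{p,2}(\Omega)$; by \eqref{eq-frac-kond-1} the right-hand side equals $\mathfrak{K}^m_{m,p}(\Omega)$, so the two scales agree on the diagonal $a=s=m$. Applying the shift operator $T_{a-m}$ from Proposition \ref{prop-shift}, which is an isomorphism of $\calk^m_{m,p}(\Omega)$ onto $\calk^m_{a,p}(\Omega)$, and comparing with the definition \eqref{eq-frac-kond-2} that sets $\mathfrak{K}^m_{a,p}(\Omega)=T_{a-m}\mathfrak{K}^m_{m,p}(\Omega)$, I obtain $\mathfrak{K}^m_{a,p}(\Omega)=\calk^m_{a,p}(\Omega)$ for every $a\in\R$.

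With this identification in hand, the ``in particular'' statement follows by applying the first part with the integer choices $s_0=m_0$ and $s_1=m_1$, which force $s=(1-\Theta)m_0+\Theta m_1=m\in\N$, and then replacing each fractional space $\mathfrak{K}^{m_0}_{a,p}(\Omega)$, $\mathfrak{K}^{m_1}_{a,p}(\Omega)$ and $\mathfrak{K}^m_{a,p}(\Omega)$ by the corresponding classical Kondratiev space via the equivalence just established. The hard part, such as it is, lies entirely in this integer identification, where one must combine the refined-localization characterization with the shift isomorphism rather than appeal to interpolation directly; everything else is a direct specialization of the preceding theorem.
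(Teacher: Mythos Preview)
Your proposal is correct and matches the paper's own argument essentially verbatim: the first formula is the specialization $a_0=a_1=a$ of Theorem~\ref{thm-general-interpol}, and the second follows from the identification $\mathfrak{K}^m_{a,p}(\Omega)=\calk^m_{a,p}(\Omega)$ for integer $m$, which the paper simply quotes while you spell out its derivation from Theorem~\ref{thm-kond-rloc}, \eqref{eq-frac-kond-1}, \eqref{eq-frac-kond-2}, and Proposition~\ref{prop-shift}.
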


The latter statement follows immediately upon recalling $\calk^m_{a,p}(\Omega)=\mathfrak{K}^m_{a,p}(\Omega)$ for all real $a$ and integer $m$.

As a consequence of this corollary we also find the following easy embedding result
\begin{lem}\label{lemma-embedding-vertical}
	%Let $\Omega$ be as before, as 
Let $\Omega=\R^d\setminus S$, where $S$ is the singularity set of some domain $D$ of polyhedral type. 	
	Moreover,  let $a\in\R$, $s_0\geq s_1$,  and $1<p<\infty$. Then it holds
	$$\mathfrak{K}^{s_0}_{a,p}(\Omega)\hookrightarrow\mathfrak{K}^{s_1}_{a,p}(\Omega).$$
\end{lem}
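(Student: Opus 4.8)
The plan is to deduce the embedding from the Corollary to Theorem \ref{thm-general-interpol}, which for a \emph{fixed} weight $a$ realises every space $\mathfrak{K}^{s}_{a,p}(\Omega)$ as a complex interpolation space of one and the same couple. The base case is integer smoothness. For integers $m_0>m_1\geq 0$ one has $\calk^{m}_{a,p}(\Omega)=\mathfrak{K}^{m}_{a,p}(\Omega)$, and comparing the defining norms in Definition \ref{def-kondratiev} gives the trivial embedding $\calk^{m_0}_{a,p}(\Omega)\hookrightarrow\calk^{m_1}_{a,p}(\Omega)$, since the norm of the target is obtained from that of the source by discarding the summands with $m_1<|\alpha|\leq m_0$. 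Thus $\{\calk^{m_1}_{a,p}(\Omega),\calk^{m_0}_{a,p}(\Omega)\}$ is an \emph{ordered} Banach couple, the smoother space being continuously embedded in the less smooth one (both are Banach for $1<p<\infty$).

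Next, for $s_0>s_1\geq 0$ I would pick integers $m_1\leq s_1$ and $m_0>s_0$ and invoke the Corollary to Theorem \ref{thm-general-interpol} (with the integer endpoints $\mathfrak{K}^{m_i}_{a,p}=\calk^{m_i}_{a,p}$) to write, for $i=0,1$,
\[
	\mathfrak{K}^{s_i}_{a,p}(\Omega)=\bigl[\calk^{m_1}_{a,p}(\Omega),\calk^{m_0}_{a,p}(\Omega)\bigr]_{\Theta_i},\qquad
	\Theta_i=\frac{s_i-m_1}{m_0-m_1}\in[0,1).
\]
Since $m_0>m_1$, the assignment $s_i\mapsto\Theta_i$ is increasing, so $s_0>s_1$ forces $\Theta_0>\Theta_1$. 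For an ordered couple $\{A_0,A_1\}$ with $A_1\hookrightarrow A_0$ the complex method is monotone in its parameter, i.e. $[A_0,A_1]_{\Theta_0}\hookrightarrow[A_0,A_1]_{\Theta_1}$ whenever $\Theta_0\geq\Theta_1$. Applying this with $A_0=\calk^{m_1}_{a,p}(\Omega)$ and $A_1=\calk^{m_0}_{a,p}(\Omega)$ yields precisely $\mathfrak{K}^{s_0}_{a,p}(\Omega)\hookrightarrow\mathfrak{K}^{s_1}_{a,p}(\Omega)$. The degenerate case $s_1=m_1\in\N_0$ (where $\Theta_1=0$) is covered directly by $[A_0,A_1]_{\Theta_0}\hookrightarrow A_0+A_1=A_0=\mathfrak{K}^{s_1}_{a,p}(\Omega)$, using that $A_0+A_1=A_0$ for ordered couples; the case $s_0=s_1$ is trivial.

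The one point that needs to be pinned down is the monotonicity of the complex method for ordered couples, since for arbitrary couples complex interpolation is \emph{not} nested. This is classical and follows from the sandwiching $(A_0,A_1)_{\Theta,1}\hookrightarrow[A_0,A_1]_{\Theta}\hookrightarrow(A_0,A_1)_{\Theta,\infty}$ between the real and complex methods, combined with the nesting $(A_0,A_1)_{\Theta_0,\infty}\hookrightarrow(A_0,A_1)_{\Theta_1,1}$ valid for ordered couples and $\Theta_0>\Theta_1$; chaining these three embeddings gives $[A_0,A_1]_{\Theta_0}\hookrightarrow[A_0,A_1]_{\Theta_1}$. I expect this orderedness argument to be the only nontrivial ingredient, everything else being the elementary integer embedding plus the already established interpolation identity.

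As a self-contained alternative that avoids citing this fact and treats all $s\geq 0$ uniformly, I would instead run the retraction argument of Theorem \ref{thm-equivalence}: the map $\Phi u=(\varphi_j u)_{j\geq 0}$ reduces the claim, via Proposition \ref{prop-localization}, to a per-layer estimate. Using $\rho\sim 2^{-j}$ on $\supp\varphi_j$ and the weighted characterization \eqref{eq-refined-weight} one obtains $\|\varphi_j u|\mathfrak{K}^{s}_{a,p}(\Omega)\|\sim 2^{j(a-s)}\|\varphi_j u|F^{s}_{p,2}(\R^d)\|+2^{ja}\|\varphi_j u|L_p\|$, whereupon the elementary inequality $\|g|F^{s_1}_{p,2}\|\lesssim\|g|L_p\|^{1-s_1/s_0}\|g|F^{s_0}_{p,2}\|^{s_1/s_0}$ together with Young's inequality (matching the powers of $2^{-j}$, since $(2^{-js_0})^{s_1/s_0}=2^{-js_1}$) yields the required bound uniformly in $j$; summing the $p$-th powers over $j$ then gives the embedding. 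In this route the main care is to make the layerwise equivalence and the scaling of the refined-localization term precise with constants independent of $j$.
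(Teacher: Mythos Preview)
Your primary argument is correct and is essentially the same as the paper's: both pick integer endpoints $m_1\leq s_1<s_0<m_0$, invoke the Corollary to Theorem \ref{thm-general-interpol} to realise $\mathfrak{K}^{s_i}_{a,p}(\Omega)$ as $[\calk^{m_1}_{a,p},\calk^{m_0}_{a,p}]_{\Theta_i}$, and then use monotonicity of the complex method for the ordered couple $\calk^{m_0}_{a,p}\hookrightarrow\calk^{m_1}_{a,p}$. The paper simply asserts this monotonicity fact, whereas you additionally supply a justification via the real/complex sandwich and handle the endpoint $\Theta_1=0$ explicitly; your alternative layerwise route is not in the paper and is a genuine extra, though only sketched.
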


\begin{proof}
	We first note that for arbitrary ordered Banach couples $\{A,B\}$, i.e. where $A\hookrightarrow B$, we also have
	$$A\hookrightarrow [A,B]_\Theta\hookrightarrow [A,B]_{\Theta'}\hookrightarrow B$$
	for all $0<\Theta\leq\Theta'<1$.
	
	Next, the case $s_1=0$ can be dealt with easily: By applying the isomorphism $T_a$ the embedding is equivalent to the 
	case $a=0$, which is obvious in view of $\mathfrak{K}^0_{0,p}(\Omega)=L_p(\Omega)$.
	
	For the general case, choose integers $0\leq m_1<s_1\leq s_0<m_0$ and parameters $\Theta_0$ and $\Theta_1$ such that
	$s_i=(1-\Theta_i)m_0+\Theta_i m_1$, $i=0,1$. It follows $\Theta_0\leq\Theta_1$, and hence
	$$
		\mathfrak{K}^{s_0}_{a,p}(\Omega)
			=\bigl[\calk^{m_0}_{a,p}(\Omega),\calk^{m_1}_{a,p}(\Omega)\bigr]_{\Theta_0}
			\hookrightarrow\bigl[\calk^{m_0}_{a,p}(\Omega),\calk^{m_1}_{a,p}(\Omega)\bigr]_{\Theta_1}
			=\mathfrak{K}^{s_1}_{a,p}(\Omega)\,,
	$$
	as claimed.
\end{proof}

%%%%%%%%%%%%%%%%%%%%%%%%%%%%%%%%%%%%%%%%%%%%%%%%%%%%%%%%%%%%%%%%%%%%%%%%%%%%%%%%%%%%%%%%%%%%%%%%%%%%%%%%%%%%%%%%%%%%%%%%%%%%%%
%%%%%%%%%%%%%%%%%%%%%%%%%%%%%%%%%%%%%%%%%%%%%%%%%%%%%%%%%%%%%%%%%%%%%%%%%%%%%%%%%%%%%%%%%%%%%%%%%%%%%%%%%%%%%%%%%%%%%%%%%%%%%%

\subsection{Kondratiev spaces with fractional smoothness on domains of polyhedral type}

So far we always considered spaces on unbounded domains $\Omega=\R^d\setminus S$, which serve as the counterpart of the domain $\R^d$ for the classical unweighted Sobolev and Bessel-potential spaces $W^m_p(\R^d)$ and $H^s_p(\R^d)$. We shall now transfer the results for Kondratiev spaces on $\Omega$ to corresponding spaces on (bounded) domains $D$.

Recall that Lemma \ref{lemma-deco} allows to reduce all considerations in this section to one of the specific subdomains described in Definitions \ref{def:polyhedral-type-R2} and \ref{def-domain}, and as in Corollary \ref{cor-localization} we may focus on the unbounded versions. Thus,  in what follows $D$ always refers to an unbounded radially-symmetric cone, a non-smooth cone, a dihedral domain or a polyhedral cone.

Next, for any of those specific domains Theorem \ref{thm-stein-general} yields a bounded linear extension operator
$$\mathfrak{E}:\calk^m_{a,p}(D)\rightarrow\calk^m_{a,p}(\Omega)=\mathfrak{K}^m_{a,p}(\Omega).$$
The interpolation property then guarantees that the operator is also bounded w.r.t. the interpolation spaces,
$$\mathfrak{E}:\bigl[\calk^{m_0}_{a+m_0-s,p}(D),\calk^{m_1}_{a+m_1-s,p}(D)\bigr]_\Theta
	\rightarrow\bigl[\calk^{m_0}_{a+m_0-s,p}(\Omega),\calk^{m_1}_{a+m_1-s,p}(\Omega)\bigr]_\Theta
	=\mathfrak{K}^s_{a,p}(\Omega),$$
where $s=(1-\Theta)m_0+\Theta m_1$. Note in particular that the target space does not depend on the choice of $m_0$ and $m_1$. A direct approach to the interpolation spaces on the left then would depend on being able to argue that also those interpolation spaces do not depend on the choice of $m_0$ and $m_1$.

Here we shall go another route and instead (inspired by the introduction of Bessel-potential spaces $H^s_p(D)$ on domains) define Kondratiev spaces on domains $D$ by restriction.

\begin{defi}
	Let $D\subset\R^d$ be a domain of polyhedral type with singularity set $S\subset\partial D$, and put
	$\Omega=\R^d\setminus S$. Further, let $s\in\R_+$, $a\in\R$,  and $1<p<\infty$. Then we define
	$$\mathfrak{K}^s_{a,p}(D)
		=\bigl\{u:D\rightarrow\R\big|\ \exists v\in\mathfrak{K}^s_{a,p}(\Omega)\text{ with }v|_D=u\bigr\}$$
	with norm
	$$\|u|\mathfrak{K}^s_{a,p}(D)\|=\inf_{v|_D=u}\|v|\mathfrak{K}^s_{a,p}(\Omega)\|.$$
\end{defi}

The boundedness of Stein's extension operator now guarantees that this definition is consistent with the original one for Kondratiev spaces $\calk^m_{a,p}(D)$, i.e. we always have
$$\calk^m_{a,p}(D)=\mathfrak{K}^m_{a,p}(D)$$
for all parameters $m\in\N_0$, $a\in\R$,  and $1<p<\infty$. This is also the basis for

\begin{satz}
	Theorem \ref{thm-general-interpol} carries over mutatis mutandis upon replacing the domain $\Omega$ by the domain
	$D$.
\end{satz}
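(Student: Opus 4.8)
The plan is to combine the extension operator from Theorem~\ref{thm-stein-general} with the restriction-based definition of $\mathfrak{K}^s_{a,p}(D)$ in order to transfer the interpolation result of Theorem~\ref{thm-general-interpol} from $\Omega=\R^d\setminus S$ to the bounded domain $D$. The central observation is that restriction and extension exhibit the standard retraction/coretraction structure: let $\mathfrak{R}:u\mapsto u|_D$ denote the restriction operator and $\mathfrak{E}$ Stein's universal extension operator. By Theorem~\ref{thm-stein-general}, $\mathfrak{E}:\calk^m_{a,p}(D)\rightarrow\mathfrak{K}^m_{a,p}(\Omega)$ is bounded for every admissible $m$, $a$, $p$, while $\mathfrak{R}$ is trivially bounded in the reverse direction, and clearly $\mathfrak{R}\circ\mathfrak{E}=\text{Id}$ on $\calk^m_{a,p}(D)$. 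Thus $\mathfrak{R}$ is a retraction and $\mathfrak{E}$ the corresponding coretraction for the integer-order spaces.

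First I would verify that this retraction/coretraction pair is simultaneously valid for both spaces of any interpolation couple $\{\mathfrak{K}^{s_0}_{a_0,p}(\Omega),\mathfrak{K}^{s_1}_{a_1,p}(\Omega)\}$. Because $\mathfrak{E}$ is universal, i.e. its construction is independent of the parameters, the interpolation property guarantees that $\mathfrak{E}$ maps the interpolated domain spaces boundedly into the interpolated spaces over $\Omega$, and likewise $\mathfrak{R}$ maps boundedly in the reverse direction, with $\mathfrak{R}\circ\mathfrak{E}=\text{Id}$ preserved throughout. The abstract retraction theorem (see e.g. \cite[Section~1.2.4]{Tr78}) then yields
$$\bigl[\mathfrak{K}^{s_0}_{a_0,p}(D),\mathfrak{K}^{s_1}_{a_1,p}(D)\bigr]_\Theta
	=\mathfrak{R}\,\bigl[\mathfrak{K}^{s_0}_{a_0,p}(\Omega),\mathfrak{K}^{s_1}_{a_1,p}(\Omega)\bigr]_\Theta\,,$$
with equivalence of norms, where the right-hand couple interpolates according to Theorem~\ref{thm-general-interpol} to $\mathfrak{K}^s_{a,p}(\Omega)$. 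Applying $\mathfrak{R}$ to the latter and invoking the definition of $\mathfrak{K}^s_{a,p}(D)$ by restriction then identifies the result as $\mathfrak{K}^s_{a,p}(D)$, which is exactly the assertion.

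I expect the main obstacle to be a clean verification that the restriction-based spaces $\mathfrak{K}^s_{a,p}(D)$ for fractional $s$ coincide (up to equivalent norms) with the spaces $\mathfrak{R}\,\mathfrak{K}^s_{a,p}(\Omega)$ produced by the retraction theorem, and more subtly that the extension operator indeed acts as a coretraction for \emph{fractional} smoothness and not merely integer orders. The honest way to secure this is to note that $\mathfrak{K}^s_{a,p}(D)$ is \emph{defined} by restriction from $\mathfrak{K}^s_{a,p}(\Omega)$, so that $\mathfrak{R}$ is a metric surjection onto it by construction; it then remains to check that $\mathfrak{E}$ provides a bounded right inverse on the fractional scale, which follows from reinterpolating the integer-order boundedness of $\mathfrak{E}$ using Theorem~\ref{thm-general-interpol} on $\Omega$ together with the already-established consistency $\calk^m_{a,p}(D)=\mathfrak{K}^m_{a,p}(D)$. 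Once these two facts are in place, the retraction formalism applies verbatim and the proof reduces to bookkeeping of parameters along the lines of $s=(1-\Theta)s_0+\Theta s_1$ and $a=(1-\Theta)a_0+\Theta a_1$, so that no genuinely new estimate beyond those already assembled in Theorems~\ref{thm-stein-general} and~\ref{thm-general-interpol} is required.
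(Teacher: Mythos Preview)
Your proposal is correct and follows essentially the same approach as the paper: the paper's proof also invokes the retraction/coretraction formalism with the restriction $\mathfrak{R}$ as retraction and Stein's extension $\mathfrak{E}$ as coretraction, reducing the claim to Theorem~\ref{thm-general-interpol} on $\Omega$. The paper additionally remarks that the same conclusion can be reached via interpolation of quotient spaces \cite[Theorem~1.17.2]{Tr78} or the direct argument in \cite[Theorem~2.13]{Tr02}, but your retraction argument is exactly one of the routes the authors single out.
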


\begin{proof}
	This is an immediate consequence of the observation
	\begin{equation*}
		\bigl[\mathfrak{K}^{s_0}_{a_0,p}(D),\mathfrak{K}^{s_1}_{a_1,p}(D)\bigr]_\Theta
			=\bigl\{u:D\rightarrow\R\big|\ 
				\exists v\in [\mathfrak{K}^{s_0}_{a_0,p}(\Omega),\mathfrak{K}^{s_1}_{a_1,p}(\Omega)]_\Theta
				\text{ with }u=v|_D\bigr\},
	\end{equation*}
	correspondingly for the norms. That observation in turn can be seen in a number of ways. Either again by referring to the
	method of retractions and coretractions -- here the restriction operator
	$$\mathfrak{R}:\calk^m_{a,p}(\Omega)\rightarrow\calk^m_{a,p}(D),
		\qquad u\mapsto u|_D,$$
	is the retraction (it obviously is a bounded linear operator with norm $1$), and the Stein extension $\mathfrak{E}$ the
	corresponding coretraction (note $\mathfrak{R}\circ\mathfrak{E}=\text{Id}$ on $\calk^m_{a,p}(D)$ for all parameters
	$m$, $a$,  and $p$). An alternative argument would be given by referring to the abstract result from
	\cite[Theorem 1.17.2]{Tr78} concerning interpolation of quotient spaces -- the space
	$$U=\{u\in\calk^m_{a,p}(\Omega):u|_D=0\text{ on }D\}$$
	is a complemented subspace of $\calk^m_{a,p}(\Omega)$, with complement
	$$V=(\mathfrak{E}\circ\mathfrak{R})\calk^m_{a,p}(\Omega)=\text{Range}(\mathfrak{E}),$$
	thus $\calk^m_{a,p}(D)$ being isomorphic to $\calk^m_{a,p}(\Omega)/U$ (the canonical isomorphism being generated
	by $\mathfrak{E})$. Finally, a more direct argument (though essentially equivalent to the other two versions) can be
	given based on arguments in \cite[Theorem 2.13]{Tr02}.
\end{proof}

%&&&&&&&&&&&&&&&&&&&&&&&&&&&&&&&&&&&&&&&&&&&&&&&&&&&&&&&&&&&
%&&&&&&&&&&&&&&&&&&&&&&&&&&&&&&&&&&&&&&&&&&&&&&&&&&&&&&&&&&&

%&&&&&&&&&&&&&&&&&&&&&&&&&&&&&&&&&&&&&&&&&&&&&&&&&&&&&&&&&&&
%&&&&&&&&&&&&&&&&&&&&&&&&&&&&&&&&&&&&&&&&&&&&&&&&&&&&&&&&&&&

%&&&&&&&&&&&&&&&&&&&&&&&&&&&&&&&&&&&&&&&&&&&&&&&&&&&&&&&&&&&
%&&&&&&&&&&&&&&&&&&&&&&&&&&&&&&&&&&&&&&&&&&&&&&&&&&&&&&&&&&&

\section{An embedding result for fractional Kondratiev spaces}

As an application we shall now present and proof a Sobolev-type embedding result for Kondratiev spaces. A more explicit proof of this result for spaces with integer smoothness can be found in \cite[Section 3]{smcw-a}, based on earlier results by Mazja and Rossmann \cite{MR1, MR2}. Our proof will be more abstract, relying on complex interpolation and once more the relation to refined localization spaces.

We begin with a preparation, a Sobolev-type embedding result for refined localization spaces.

\begin{lem}
	Let $D\subset\R^d$ be an arbitrary domain, and let $0<p_0\leq p_1<\infty$, $0<q_0,q_1\leq\infty$,  and $s_0,s_1\in\R$,
	where
	$$
		s_0-\frac{d}{p_0}\geq s_1-\frac{d}{p_1}\,.
	$$
	Then we have an embedding
	$$F^{s_0,\mathrm{rloc}}_{p_0,q_0}(D)\hookrightarrow F^{s_1,\mathrm{rloc}}_{p_1,q_1}(D).$$
\end{lem}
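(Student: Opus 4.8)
Prove a Sobolev-type embedding for refined localization spaces:
$$F^{s_0,\mathrm{rloc}}_{p_0,q_0}(D)\hookrightarrow F^{s_1,\mathrm{rloc}}_{p_1,q_1}(D)$$
when $p_0\le p_1$ and $s_0-d/p_0\ge s_1-d/p_1$.

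**Key tools available.**
1. The classical Sobolev embedding for $F^s_{p,q}(\R^d)$ spaces: $F^{s_0}_{p_0,q_0}(\R^d)\hookrightarrow F^{s_1}_{p_1,q_1}(\R^d)$ under the same parameter conditions. This is standard (Franke-Jawerth).

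2. The definition (Definition \ref{def-rloc}):
$$\|f|F^{s,\mathrm{rloc}}_{p,q}(D)\| = \Big(\sum_{j,l}\|\varphi_{j,l}f|F^s_{p,q}(\R^d)\|^p\Big)^{1/p}.$$

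3. The Whitney structure: $\supp\varphi_{j,l}\subset 2Q_{j,k_l}$, cube of side $2^{-j}$, with bounded overlap.

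**The obstacle.** We want to bound $\|\cdot|F^{s_1,\mathrm{rloc}}_{p_1,q_1}\|$ (an $\ell^{p_1}$ sum of $F^{s_1}_{p_1,q_1}$-pieces) by $\|\cdot|F^{s_0,\mathrm{rloc}}_{p_0,q_0}\|$ (an $\ell^{p_0}$ sum of $F^{s_0}_{p_0,q_0}$-pieces). Two mismatches: (a) the inner function-space norms differ, (b) the outer $\ell^p$ exponents differ ($p_0\le p_1$).

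---

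**My plan.**

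**Step 1: Handle inner norms via classical Sobolev.**
For each fixed $(j,l)$, the function $\varphi_{j,l}f$ is supported in $2Q_{j,k_l}$, a cube of side length $\sim 2^{-j}$. The classical Sobolev embedding gives
$$\|\varphi_{j,l}f|F^{s_1}_{p_1,q_1}(\R^d)\| \lesssim \|\varphi_{j,l}f|F^{s_0}_{p_0,q_0}(\R^d)\|,$$
but the constant is *not* scale-invariant — the two spaces scale differently. I need to track the $j$-dependence of the constant.

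**Step 2: Track scaling.** On a cube of side $2^{-j}$, rescale to the unit cube via dilation $x\mapsto 2^j x$. Under $f\mapsto f(2^j\cdot)$, the homogeneous Besov/Triebel scaling gives roughly
$$\|g(2^j\cdot)|F^s_{p,q}\| \sim 2^{j(s-d/p)}\|g|F^s_{p,q}\|$$
(for the homogeneous behavior; the inhomogeneous correction is controlled since $s>\sigma_{p,q}$ and supports are in cubes). Applying this in both spaces and using the *unit-cube* Sobolev embedding (scale-invariant there), I expect
$$\|\varphi_{j,l}f|F^{s_1}_{p_1,q_1}\| \lesssim 2^{j[(s_0-d/p_0)-(s_1-d/p_1)]}\cdot 2^{\text{(correction)}}\|\varphi_{j,l}f|F^{s_0}_{p_0,q_0}\|.$$
The crucial point is that the exponent $(s_0-d/p_0)-(s_1-d/p_1)\ge 0$ is *favorable* — the scaling factor does not blow up as $j\to\infty$; it stays bounded (by $1$ on each piece, since $2^{-j}\le 1$). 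I must verify the inhomogeneous terms do not spoil this; localization to cubes of side $\le 1$ together with $s>\sigma$ should suffice.

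**Step 3: Handle the $\ell^p$ exponent mismatch.** After Step 2, I have
$$\|f|F^{s_1,\mathrm{rloc}}_{p_1,q_1}\|^{p_1} = \sum_{j,l}\|\varphi_{j,l}f|F^{s_1}_{p_1,q_1}\|^{p_1} \lesssim \sum_{j,l}\Big(\|\varphi_{j,l}f|F^{s_0}_{p_0,q_0}\|\Big)^{p_1}.$$
Since $p_0\le p_1$, I use the elementary $\ell^{p_0}\hookrightarrow\ell^{p_1}$ embedding (for sequences):
$$\Big(\sum a_k^{p_1}\Big)^{1/p_1}\le \Big(\sum a_k^{p_0}\Big)^{1/p_0}.$$
This gives
$$\|f|F^{s_1,\mathrm{rloc}}_{p_1,q_1}\| \lesssim \Big(\sum_{j,l}\|\varphi_{j,l}f|F^{s_0}_{p_0,q_0}\|^{p_0}\Big)^{1/p_0} = \|f|F^{s_0,\mathrm{rloc}}_{p_0,q_0}\|.$$

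---

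**Where I expect trouble.**
- **Step 2 is the hard part.** Getting the scaling constant *with the correct favorable sign* requires care with inhomogeneous (vs. homogeneous) norms. The clean homogeneous scaling $2^{j(s-d/p)}$ is not literally true for inhomogeneous $F^s_{p,q}$. I'd likely need a lemma: for $g$ supported in a cube of side $\le 1$ and $s>\sigma_{p,q}$, the inhomogeneous norm is comparable to the homogeneous one on that scale, OR argue via atomic/molecular decompositions where the scaling is transparent.
- **The endpoint $p_1=\infty$** is excluded ($p_1<\infty$ in the hypothesis), which is good — it avoids the standard $F^s_{\infty,q}$ difficulties.
- **The $q$-indices** $q_0,q_1$ are essentially free here because the embedding in Step 1 (classical Sobolev with $p_0<p_1$, strict) does not depend on the fine indices when $p_0<p_1$ strictly; only when $p_0=p_1$ does one need $q_0\le q_1$ (the Franke-Jawerth refinement). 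I should check whether the statement implicitly assumes $p_0<p_1$ or also covers $p_0=p_1$; the hypothesis as stated ($p_0\le p_1$) with equality would need the inner embedding to require $s_0-d/p_0\ge s_1-d/p_1$ to be *strict* or the $q$'s to be ordered. I'd flag this.
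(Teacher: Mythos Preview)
Your approach is the same as the paper's, but you have talked yourself into an unnecessary difficulty in Step~2. The embedding $F^{s_0}_{p_0,q_0}(\R^d)\hookrightarrow F^{s_1}_{p_1,q_1}(\R^d)$ is a single bounded linear map on all of $\R^d$; its operator norm $c$ is one fixed number, independent of which function you feed in. Applying it to each piece $\varphi_{j,l}f$ therefore gives
\[
\|\varphi_{j,l}f|F^{s_1}_{p_1,q_1}(\R^d)\|\le c\,\|\varphi_{j,l}f|F^{s_0}_{p_0,q_0}(\R^d)\|
\]
with the \emph{same} constant $c$ for every $(j,l)$, automatically. No rescaling is performed, so the different dilation behaviour of the two norms never enters. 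This is exactly how the paper argues: one sentence for the uniform per-piece bound, after which your Step~3 (the elementary inclusion $\ell^{p_0}\hookrightarrow\ell^{p_1}$ for $p_0\le p_1$) finishes the proof. Your scaling analysis would also work, but it is a detour around a non-existent obstacle.

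Your final flag is well taken: in the borderline case $p_0=p_1$ and $s_0=s_1$ the classical $\R^d$-embedding requires $q_0\le q_1$, which neither the hypotheses nor the paper's proof address. For the application in the paper (where $q_0=q_1=2$) this is harmless, but the lemma as stated is slightly overstated at that endpoint.
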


\begin{proof}
	We first observe that under the exactly same conditions we have a continuous embedding
	$$F^{s_0}_{p_0,q_0}(\R^d)\hookrightarrow F^{s_1}_{p_1,q_1}(\R^d).$$
	The claim now follows immediately from the definition of the quasi-norm in refined localization spaces, since we have
	for all $j\geq 0$
	$$\|\varphi_j u|F^{s_1}_{p_1,q_1}(\R^d)\|\leq c\,\|\varphi_j u|F^{s_0}_{p_0,q_0}(\R^d)\|$$
	with a constant independent of $j\geq 0$ and $u\in F^{s_0,\mathrm{rloc}}_{p_0,q_0}(D).$
\end{proof}

\begin{satz}
	Let $D\subset\R^d$ be a domain of polyhedral type with singularity set $S\subset\partial D$, and put
	$\Omega=\R^d\setminus S$. Further, let $s_0,s_1\in\R_+$, $1<p_0\leq p_1<\infty$ and $a_0,a_1\in\R$, where
	$$
		s_0-\frac{d}{p_0}\geq s_1-\frac{d}{p_1}\qquad\text{and}\qquad a_0-\frac{d}{p_0}\geq a_1-\frac{d}{p_1}.
	$$
	Then we have continuous embeddings
	$$
		\mathfrak{K}^{s_0}_{a_0,p_0}(\Omega)\hookrightarrow\mathfrak{K}^{s_1}_{a_1,p_1}(\Omega)
		\qquad\text{as well as}\qquad
		\mathfrak{K}^{s_0}_{a_0,p_0}(D)\hookrightarrow\mathfrak{K}^{s_1}_{a_1,p_1}(D)
	$$
\end{satz}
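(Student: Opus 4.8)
The plan is to first reduce the assertion on the bounded domain $D$ to the one on $\Omega=\R^d\setminus S$, and then to prove the embedding on $\Omega$ by factoring it through a single ``critical'' step (in which the integrability index changes from $p_0$ to $p_1$) followed by elementary embeddings at fixed integrability $p_1$. For the reduction of the $D$-case I would use that $\mathfrak K^s_{a,p}(D)$ is defined by restriction from $\Omega$ with the infimum norm: given $u\in\mathfrak K^{s_0}_{a_0,p_0}(D)$ I pick a near-optimal extension $v\in\mathfrak K^{s_0}_{a_0,p_0}(\Omega)$ with $v|_D=u$ (whose existence with control on the fractional scale is guaranteed by Theorem \ref{thm-stein-general} together with the interpolation property), apply the $\Omega$-embedding to get $v\in\mathfrak K^{s_1}_{a_1,p_1}(\Omega)$ with controlled norm, and read off $\|u|\mathfrak K^{s_1}_{a_1,p_1}(D)\|\le\|v|\mathfrak K^{s_1}_{a_1,p_1}(\Omega)\|\lesssim\|v|\mathfrak K^{s_0}_{a_0,p_0}(\Omega)\|$; passing to the infimum over extensions yields the $D$-embedding. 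Thus it suffices to treat $\Omega$.

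For the $\Omega$-case the guiding observation is that the shift isomorphism $T_b$ and the identity $\mathfrak K^s_{a,p}(\Omega)=T_{a-s}F^{s,\mathrm{rloc}}_{p,2}(\Omega)$ let me pass freely between the Kondratiev and refined localization scales, but the refined-localization Sobolev embedding stated just above only matches the Kondratiev scale cleanly when the two weight shifts agree. I therefore introduce intermediate parameters
$$s_*=s_0-\tfrac{d}{p_0}+\tfrac{d}{p_1},\qquad a_*=a_0-\tfrac{d}{p_0}+\tfrac{d}{p_1},$$
chosen precisely so that $a_*-s_*=a_0-s_0$, and split the target embedding as
$$\mathfrak K^{s_0}_{a_0,p_0}(\Omega)\hookrightarrow\mathfrak K^{s_*}_{a_*,p_1}(\Omega)\hookrightarrow\mathfrak K^{s_1}_{a_1,p_1}(\Omega).$$
For the first (critical) embedding, since $a_*-s_*=a_0-s_0$ the common weight factor $\rho^{a_0-s_0}$ is shared by both spaces, so applying the isomorphism $T_{s_0-a_0}$ reduces it to $F^{s_0,\mathrm{rloc}}_{p_0,2}(\Omega)\hookrightarrow F^{s_*,\mathrm{rloc}}_{p_1,2}(\Omega)$; as $s_0-\tfrac{d}{p_0}=s_*-\tfrac{d}{p_1}$ and $p_0\le p_1$, this is exactly the refined-localization embedding lemma in its limiting case.

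For the second embedding I would work at the fixed integrability $p_1$ and peel off the excess smoothness and weight separately. The smoothness hypothesis $s_0-\tfrac{d}{p_0}\ge s_1-\tfrac{d}{p_1}$ gives $s_*\ge s_1\ge0$, so $s_*\in\R_+$ and Lemma \ref{lemma-embedding-vertical} yields $\mathfrak K^{s_*}_{a_*,p_1}(\Omega)\hookrightarrow\mathfrak K^{s_1}_{a_*,p_1}(\Omega)$. The weight hypothesis $a_0-\tfrac{d}{p_0}\ge a_1-\tfrac{d}{p_1}$ gives $a_*\ge a_1$, and since $\rho\le1$ multiplication by $\rho^{a_*-a_1}$ is bounded on $F^{s_1,\mathrm{rloc}}_{p_1,2}(\Omega)$ (immediate from the localization, as $\rho\sim2^{-j}$ on the $j$-th layer and $2^{-j(a_*-a_1)}\le1$), so that $\mathfrak K^{s_1}_{a_*,p_1}(\Omega)\hookrightarrow\mathfrak K^{s_1}_{a_1,p_1}(\Omega)$. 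Chaining the two steps proves the $\Omega$-embedding.

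The step I expect to require the most care is the bookkeeping of the weight exponents in the critical step. A naive attempt to combine the refined-localization embedding with the weight shifts layer-by-layer loses a factor $2^{j[(s_0-d/p_0)-(s_1-d/p_1)]}$ per layer, because it applies the $\R^d$-Sobolev embedding to a piece that actually lives at spatial scale $2^{-j}$ without exploiting that scale; consequently the resulting series fails to close under $\ell_{p_0}\hookrightarrow\ell_{p_1}$ unless $a_0-s_0\ge a_1-s_1$, which is \emph{not} among the hypotheses. Isolating the integrability change as a single scale-invariant (critical) embedding, and only afterwards adjusting smoothness and weight at fixed $p_1$, is exactly what makes the two hypotheses $s_0-\tfrac{d}{p_0}\ge s_1-\tfrac{d}{p_1}$ and $a_0-\tfrac{d}{p_0}\ge a_1-\tfrac{d}{p_1}$ enter in the right places.
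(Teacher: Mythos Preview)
Your proof is correct and follows essentially the same route as the paper: isolate the critical step $F^{s_0,\mathrm{rloc}}_{p_0,2}(\Omega)\hookrightarrow F^{s_*,\mathrm{rloc}}_{p_1,2}(\Omega)$ via the shift isomorphism, then adjust smoothness (Lemma~\ref{lemma-embedding-vertical}) and weight ($\rho\le 1$) at fixed $p_1$, and deduce the $D$-case from the restriction definition. The only cosmetic difference is that the paper lowers the weight parameter before the smoothness parameter, whereas you do it in the opposite order; your intermediate parameters $s_*,a_*$ coincide with the paper's $\widetilde s_1$ and $s_1-s_0+a_0$.
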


\begin{proof}
	{\bf Step 1:} We first assume the stronger condition $s_0-s_1=d\bigl(\frac{1}{p_0}-\frac{1}{p_1}\bigr)$.
	Under this condition we have an embedding
	$$F^{s_0,\mathrm{rloc}}_{p_0,2}(\Omega)\hookrightarrow F^{s_1,\mathrm{rloc}}_{p_1,2}(\Omega),$$
	which in view of Theorem \ref{thm-kond-rloc} and Definition \ref{def-kond-fractional} is equivalent to
	$$\mathfrak{K}^{s_0}_{s_0,p_0}(\Omega)\hookrightarrow\mathfrak{K}^{s_1}_{s_1,p_1}(\Omega).$$
	Applying the isomorphism $T_{a_0-s_0}$ to this embedding results in
	$$\mathfrak{K}^{s_0}_{a_0,p_0}(\Omega)\hookrightarrow\mathfrak{K}^{s_1}_{s_1-s_0+a_0,p_1}(\Omega).$$
	Since under our assumptions we have
	$$a_1\leq a_0-d\Bigl(\frac{1}{p_0}-\frac{1}{p_1}\Bigr)=a_0-(s_0-s_1)$$
	the right-hand side space in that last embedding is itself embedded into $\mathfrak{K}^{s_1}_{a_1,p_1}(\Omega)$, 
	proving the claim in this special case.
	
	{\bf Step 2:} The general case now follows by observing
	$$\mathfrak{K}^{s_0}_{a_0,p_0}(\Omega)
		\hookrightarrow\mathfrak{K}^{\widetilde s_1}_{a_1,p_1}(\Omega)
		\hookrightarrow\mathfrak{K}^{s_1}_{a_1,p_1}(\Omega),$$
	where
	$$\widetilde s_1=s_0-d\Bigl(\frac{1}{p_0}-\frac{1}{p_1}\Bigr)\geq s_1$$
	fulfills the stronger condition from Step 1. In turn, the second embedding is an immediate consequence of Lemma
	\ref{lemma-embedding-vertical}.
	
	{\bf Step 3:} The result for Kondratiev spaces on $D$ then immediately follows from the definition of the spaces via
	restriction.
\end{proof}

Sharpness of these conditions can be verified most directly for the standard domains $\R^d\setminus\R^\ell_*$ from simple scaling arguments as in \cite[Proposition 3.1]{smcw-a}. We omit the details.

\begin{appendix}

\section{$u$-wavelet systems}

In his monograph \cite{Tr08} Triebel presented a construction of wavelet systems for arbitrary (bounded or unbounded) domains $\Omega$ which turned out to be orthonormal bases for $L_2(\Omega)$, unconditional bases for $L_p(\Omega)$, $1<p<\infty$, and representation systems for $F^{s,\mathrm{rloc}}_{p,q}(\Omega)$ for the whole range of parameters of interest to us, i.e. $0<p,q\leq\infty$ and $s>\sigma_{p,q}$. The construction is quite involved, so we will not present details here,  but instead focus on recalling the general concept and the resulting characterization for the refined localization spaces. For more details, a proof of the wavelet characterization stated below and further references, we refer to \cite[Chapter 2]{Tr08}.

We start with recalling the construction of a wavelet basis on $\R^d$ by tensorization of a univariate wavelet system. Let $\psi_0$ be a univariate scaling function and $\psi_1$ the associated wavelet, so that
$$\{\psi_{0,k}=\psi_0(\cdot-k):k\in\Z\}\cup\{\psi_{j,k}=2^{j/2}\psi_1(2^j\cdot -k):j\geq 0, k\in\Z\}$$
forms an orthonormal basis of $L_2(\R)$. Constructions of such scaling functions and wavelets with various desirable properties are well-known; let us particularly mention the construction by Daubechies for compactly supported wavelet systems of arbitrarily prescribed (finite) smoothness \cite{Daub}. Now let $E=\{0,1\}^d\setminus\{0\}$, and define
$$\Psi_0(x)=\prod_{j=1}^d\psi_0(x_j)\qquad\text{and}\qquad\Psi_e(x)=\prod_{j=1}^d\psi_{e_j}(x_j),\quad e\in E,$$
the $d$-variate scaling function and the associated $2^d-1$ wavelets. Then
$$\{\Psi_{0,k}=\psi_0(\cdot-k):k\in\Z^d\}\cup\{\Psi_{e,j,k}=2^{jd/2}\Psi_e(2^j\cdot -k):j\geq 0, k\in\Z^d\}$$
forms an orthonormal basis of $L_2(\R^d)$. Finally, we note that by replacing $\psi_i$ by $\psi_i^L=2^{L/2}\psi_i(2^L\cdot)$ for a given $L\in\N$ we can control (shrink) the support of the scaling function and wavelet as required in further constructions. The tensorized functions $\Psi^L_{e,j,k}$ are defined accordingly.

\begin{defi}
	Let $\Omega\subsetneq\R^d$ be an arbitrary domain, $\Gamma=\partial\Omega$. Then a set
	$$\Z_\Omega=\{x_{j,r}\in\Omega: j\in\N_0, r=1,\ldots,N_j\},\qquad N_j\in\overline{\N}=\N\cup\{\infty\},$$
	with
	$$|x_{j,r}-x_{j,r'}|\geq c_1 2^{-j},\quad j\in\N_0,r\neq r',$$
	as well as
	$$\dist\Bigl(\bigcup_{r=1}^{N_j}B(x_{j,r}, c_2 2^{-j}),\Gamma\Bigr)\geq c_3 2^{-j},\quad j\in\N_0$$
	for some positive real constants $c_1$, $c_2$,  and $c_3$ is called an approximate lattice in $\Omega$.
\end{defi}

\begin{defi}
	Let $\Omega\subsetneq\R^d$ be an arbitrary domain, and let $\Z_\Omega$ be an approximate lattice in $\Omega$. Let
	$K,L,u\in\N$, $D>0$,  and $c_4>0$. Then a system
	$$\{\Phi_{j,r}:j\in\N_0,1,\ldots,N_j\}\subset C^u(\R^d),\quad N_j\in\overline{\N},$$
	with
	$$\supp\Phi_{j,r}\subset B(x_{j,r}, c_2 2^{-j}),\quad j\in\N_0,$$
	is called a $u$-wavelet system (with respect to $\Omega$), if it consists of the following three types of functions:
	\begin{enumerate}
		\item
			Basic wavelets
			$$\Phi_{0,r}=\Psi^L_{e,0,m}\quad\text{for some}\quad e\in\{0,1\}^d,\  m\in\Z^d,$$
		\item
			Interior wavelets
			$$\Phi_{j,r}=\Psi^L_{e,j,m},\quad j\in\N,\  \dist(x_{j,r},\Gamma)\geq c_4 2^{-j},$$
			for some $e\in E$ and $m\in\Z^d$,
		\item
			Boundary wavelet
			$$\Phi_{j,r}=\sum_{|m-m'|\leq K}d^j_{m,m'}\Psi^L_{\underline e,j,m'},
				\quad j\in\N,\  \dist(x_{j,r},\Gamma) < c_4 2^{-j},$$
			for some $m\in\Z^d$ (depending on $j$ and $r$) and coefficients $d^j_{m,m'}\in\R$ with
			$$\sum_{|m-m'|\leq K}|d^j_{m,m'}|\leq D\quad\text{and}\quad
				\supp\Psi^L_{\underline e,j,m'}\subset B(x_{j,r}, c_2 2^{-j}).$$
			Therein $\underline e=(1,\ldots,1)\in\{0,1\}^d$.
	\end{enumerate}
\end{defi}

In \cite{Tr08} it was shown that for arbitrary domains and every given $u\in\N$ such $u$-wavelet systems can  indeed be constructed.

\begin{defi}
	Let $\Omega\subsetneq\R^d$ be an arbitrary domain, and let $\Z_\Omega$ be an approximate lattice in $\Omega$. Let
	$s\in\R$ and $0<p,q\leq\infty$. Then $b^s_{p,q}(\Z_\Omega)$ is the collection of all sequences
	$$\lambda=\{\lambda_{j,r}\in\mathbb{C}:j\in\N_0,r=1,\ldots,N_j\},\quad N_j\in\overline{\N},$$
	such that
	$$\|\lambda|b^s_{p,q}(\Z_\Omega)\|=\Biggl(\sum_{j\geq 0}2^{j(s-\frac{n}{p})q}
		\biggl(\sum_{r=1}^{N_j}|\lambda_{j,r}|^p\biggr)^{q/p}\Biggr)^{1/q}<\infty.$$
	Similarly, the set $f^s_{p,q}(\Z_\Omega)$ is the collection of all sequences $\lambda$ of the above form such that
	$$\|\lambda|f^s_{p,q}(\Z_\Omega)\|=\Biggl\|\biggl(\sum_{j\geq 0}\sum_{r=1}^{N_j}
		2^{jsq}|\lambda_{j,r}|^q\chi_{j,r}(\cdot)\biggr)^{1/q}\Bigg|L_p(\Omega)\Biggr\|<\infty,$$
	wherein $\chi_{j,r}$ denotes the characteristic function of the ball $B(x_{j,r},c_2 2^{-j})$.
\end{defi}

\end{appendix}

%&&&&&&&&&&&&&&&&&&&&&&&&&&&&&&&&&&&&&&&&&&&&&&&&&&&&&&&&&&&
%&&&&&&&&&&&&&&&&&&&&&&&&&&&&&&&&&&&&&&&&&&&&&&&&&&&&&&&&&&&

\end{document}